\setlist{leftmargin=*, wide, labelindent=0pt}
\setlist[enumerate]{label*=(\alph*),ref=\alph*}
\numberwithin{equation}{section}
\crefname{Thm}{Theorem}{Theorems}
\crefname{Rem}{Remark}{Remarks}
\crefname{Prop}{Proposition}{Propositions}
\crefname{Cor}{Corollary}{Corollaries}
\crefname{Cons}{Construction}{Constructions}
\crefname{Exa}{Example}{Examples}
\crefname{Lem}{Lemma}{Lemmas}
\crefname{Rec}{Recollection}{Recollections}
\newtheorem{Cor}[equation]{Corollary}
\newtheorem{Lem}[equation]{Lemma}
\newtheorem{Prop}[equation]{Proposition}
\newtheorem{Thm}[equation]{Theorem}
\theoremstyle{remark}
\newtheorem{Def}[equation]{Definition}
\newtheorem{Not}[equation]{Notation}
\newtheorem{Exa}[equation]{Example}
\newtheorem{Hyp}[equation]{Hypothesis}
\newtheorem{Rem}[equation]{Remark}
\newtheorem{Rec}[equation]{Recollection}
\newtheorem{Cons}[equation]{Construction}
\newtheorem*{Ack}{Acknowledgements}
\newtheorem{Conv}[equation]{Convention}
\newcommand{\nc}{\newcommand}
\nc{\dmo}{\DeclareMathOperator}
\dmo{\coker}{coker}
\dmo{\cone}{cone}
\dmo{\Der}{D}
\dmo{\DAM}{\DAMbig^{\geom}}%
\dmo{\DAMbig}{DAM}%
\dmo{\Ext}{Ext}
\dmo{\Gal}{Gal}
\dmo{\Hm}{H}
\dmo{\Hom}{Hom}
\dmo{\Id}{Id}
\dmo{\Ind}{Ind}
\dmo{\Infl}{Infl}
\dmo{\Ker}{Ker}
\dmo{\Mod}{Mod}
\dmo{\opname}{op}
\dmo{\perm}{perm}
\dmo{\Perm}{Perm}
\dmo{\SH}{SH}
\dmo{\SHmot}{SH^{\mathrm{c}}_{\bbA^{\!1}}}
\dmo{\Spc}{Spc}
\dmo{\Spec}{Spec}
\dmo{\Spech}{\Spec^{h}}
\dmo{\stab}{stab}
\dmo{\Stab}{Stab}
\dmo{\supp}{supp}
\dmo{\thick}{thick}
\dmo{\Locname}{Loc}
\nc{\Inj}{\mathrm{Inj}}
\nc{\cat}[1]{\mathscr{#1}}
\nc{\cK}{\cat{K}}
\nc{\cL}{\cat{L}}
\nc{\colim}{\mathop{\mathrm{colim}}}
\nc{\hocolim}{\mathop{\mathrm{hocolim}}}
\nc{\cP}{\cat{P}}
\nc{\cQ}{\cat{Q}}
\nc{\cS}{\cat{S}}
\nc{\cT}{\cat{T}}
\nc{\CAlg}{\mathrm{CAlg}}
\nc{\eg}{{\sl e.g.}\@\xspace}
\nc{\gp}{\mathfrak{p}}
\nc{\gq}{\mathfrak{q}}
\nc{\hook}{\hookrightarrow}
\nc{\ie}{{\sl i.e.}\@\xspace}
\nc{\into}{\mathop{\rightarrowtail}}
\nc{\inv}{^{-1}}
\nc{\kk}{k}
\nc{\kkG}{\kk G}
\nc{\Loc}[1]{\Locname(#1)}
\nc{\loccit}{{\sl loc.\ cit.}\xspace}
\nc{\Mid}{\,\big|\,}
\nc{\onto}{\mathop{\twoheadrightarrow}}
\nc{\op}{^{\opname}}
\nc{\sminus}{\smallsetminus}
\nc{\potimes}[1]{^{\otimes #1}}
\nc{\sbull}{{\scriptscriptstyle\bullet}}
\nc{\SET}[2]{\big\{\,#1\Mid#2\,\big\}}
\nc{\unit}{\mathbb{1}}
\dmo{\DPerm}{DPerm}
\nc{\W}{\mathbb{W}}
\dmo{\sta}{sta}
\nc{\rsd}[1]{\mathrm{rsd}_{#1}}
\dmo{\End}{End}
\nc{\isoto}{\overset{\sim}{\,\to\,}}
\nc{\isofrom}{\overset{\sim}{\,\leftarrow\,}}
\let\le=\leqslant
\let\ge=\geqslant
\nc{\sto}{\rightsquigarrow}
\nc{\xisoto}[1]{\xrightarrow[\sim]{#1}}
\nc{\xto}[1]{\xrightarrow{#1}}
\nc{\xfrom}[1]{\xleftarrow{#1}}
\nc{\xinto}[1]{\overset{#1}{\,\into\,}}
\nc{\xonto}[1]{\overset{#1}{\,\onto\,}}
\nc{\lto}{\leftarrow}
\nc{\normaleq}{\trianglelefteqslant}
\let\normale\normaleq
\nc{\normal}{\lhd}
\nc{\normaleop}{\mathop{\mathring{\trianglelefteqslant}}}
\dmo{\chara}{char}%
\dmo{\CoInd}{CoInd}
\dmo{\DMbig}{DM}
\dmo{\DM}{\DMbig^{\geom}}
\dmo{\id}{id}
\dmo{\Img}{Im}
\dmo{\im}{im}
\dmo{\Komp}{K}
\dmo{\proj}{proj}
\dmo{\rmH}{H}
\dmo{\Res}{Res}
\dmo{\smallb}{b}
\dmo{\geom}{gm}
\dmo{\stabname}{stab}
\dmo{\comp}{comp}
\dmo{\Supp}{Supp}
\dmo{\kosname}{kos}
\dmo{\subname}{Sub}
\nc{\Sub}[1]{\subname_{#1}}
\nc{\Weyl}[2]{{#1}/\!\!/{#2}}
\nc{\WGH}{\Weyl{G}{H}}
\nc{\tInd}{{}^{\otimes\!}\Ind}
\nc{\inn}{;}
\nc{\Vee}[1]{V_{#1}}%
\nc{\Loctens}[1]{\Locname_{\otimes}(#1)}
\nc{\SpcKG}{\Spc(\cK(G))}
\nc{\SpcKGk}{\Spc(\cK(G;\kk))}
\nc{\SpcKE}{\Spc(\cK(E))}
\nc{\Rall}{\rmH^{\sbull\sbull}}
\nc{\EA}[2]{\mathcal{E}_{#1}(#2)}
\nc{\EApp}[1]{\EA{p}{#1}}
\nc{\kos}[2][]{\kosname_{#1}(#2)}
\nc{\sKG}{\kos[G]{K}}
\nc{\Zp}{\hat{\bbZ}_p}
\nc{\adh}[1]{\overline{#1}}
\nc{\adj}{\dashv}
\nc{\apriori}{{\sl a priori}\xspace}
\nc{\bs}{\backslash}
\nc{\cf}{{\sl cf.}\ }
\nc{\Db}{\Der_{\smallb}}
\nc{\D}{\Der}
\nc{\FFsep}{\overline{\FF}}
\nc{\Fp}{\bbF_{\!p}}
\nc{\gm}{\mathfrak{m}}
\mathchardef\mhyphen="2D
\nc{\ideal}[1]{\langle #1\rangle}
\nc{\Kb}{\Komp_{\smallb}}
\nc{\K}{\Komp}
\nc{\leop}{\mathop{\mathring{\le}}}
\nc{\Lotimes}{\otimes^{\rmL}}
\nc{\To}{\Rightarrow}
\nc{\cSpec}{\mathsf{Spec}}
\nc{\Top}{\mathsf{Top}}
\let\ea\expandafter
\def\foreachLetter#1#2#3{\foreachcount=#1
  \ea\loop\ea\ea\ea#3\@Alph\foreachcount
  \advance\foreachcount by 1
  \ifnum\foreachcount<#2\repeat}
\def\definebb#1{\ea\gdef\csname #1#1\endcsname{\ensuremath{\mathbb{#1}}\xspace}}
\def\definebb#1{\ea\gdef\csname bb#1\endcsname{\ensuremath{\mathbb{#1}}\xspace}}
\date{2023 December 29}
\author{Paul Balmer}
\address{Paul Balmer, UCLA Mathematics Department, Los Angeles, CA 90095, USA}
\email{balmer@math.ucla.edu}
\urladdr{https://www.math.ucla.edu/~balmer}
\author{Martin Gallauer}
\address{Martin Gallauer, Warwick Mathematics Institute, Coventry CV4 7AL, UK}
\email{martin.gallauer@warwick.ac.uk}
\urladdr{https://warwick.ac.uk/mgallauer}
\begin{document}


\title{The spectrum of Artin motives}

\begin{abstract}
We analyze the tt-geometry of derived Artin motives, via modular representation theory of profinite groups.
To illustrate our methods, we discuss Artin motives over a finite field, in which case we also prove stratification.
\end{abstract}

\subjclass[2020]{14F42; 20C20, 18F99, 18G90}
\keywords{Tensor-triangular geometry, permutation modules, Artin motives}

\thanks{First-named author supported by NSF grant~DMS-2153758.
The authors thank the Hausdorff Institute for Mathematics in Bonn for its hospitality during the preparation of this paper.}

\maketitle

\section{Introduction}
\label{sec:intro}


Throughout the paper, $\bbF$ is a `base field' (for motives) and $G$ is a profinite group, \eg the absolute Galois group of~$\bbF$. We denote by $\kk$ a field of `coefficients'.

\smallbreak
\subsection*{Artin motives and permutation modules}
In the big picture of \emph{motivic tt-geometry} one would like to understand the motivic derived category~$\DMbig(\bbF)$ of Voevodsky~\cite{Voevodsky00}, and the motivic stable homotopy category~$\SH(\bbF)$ of Morel-Voevodsky~\cite{morel-voevodsky:SHmot}, from the perspective of tensor-triangular geometry~\cite{balmer:icm}. This project is arguably one of the most challenging parts of tt-geometry and very little is known at the moment~\cite{peter:spectrum-damt,heller-ormsby,gallauer:tannaka-dmet,balmer-gallauer:rage,vishik:isotropic,du-vishik:spc-SH-mot}.
Here we consider another flavor of motives, namely the $\kk$-linear derived category of geometric \emph{Artin motives}
\[
\DAM(\FF;\kk)
\]
\ie the tensor-triangulated subcategory of~$\DM(\bbF;\kk)$ generated by the motives of \emph{finite separable extensions} of~$\bbF$.
As with most tt-categories,~$\DAM(\FF;\kk)$ tends to be `wild' (\Cref{Rem:wild}) and the spectrum~$\Spc(\DAM(\bbF;\kk))$ yields the best classification one can reasonably hope for, namely that of its thick tensor-ideals.
The same is true for~$\DM(\bbF;\kk)$ and we have a surjection on spectra
\[
\Spc(\DM(\bbF;\kk))\onto \Spc(\DAM(\bbF;\kk))
\]
by~\cite{balmer:surjectivity}. Hence the geometric complexity that we describe here for Artin motives also reflects the complexity of~$\DM(\bbF;\kk)$.

We approach Artin motives via representations of the absolute Galois group $\Gal(\bbF^\mathrm{sep}/\bbF)$ of~$\bbF$.
In fact, our results will hold for general profinite groups~$G$.
Our main object of study is the bounded homotopy category of finitely generated permutation $\kk G$-modules (idempotent-completed) over the profinite group~$G$:
\begin{equation}
\label{eq:K(Gamma)}%
\cK(G;\kk)=\Kb(\perm(G;\kk))^{\natural}.
\end{equation}
Voevodsky established in~\cite{Voevodsky00} an equivalence between this category $\cK(G;\kk)$ for~$G=\Gal(\bbF^{\mathrm{sep}}/\bbF)$ and our initial category~$\DAM(\FF;\kk)$.
See \Cref{Rec:perm,Rec:Artin-motives}.
So although our motivation comes from algebraic geometry, we write most of the paper in the more general setting of representation theory.

There are further incarnations of the tt-category~\eqref{eq:K(Gamma)}, in terms of cohomological Mackey functors, or equivariant modules over the Eilenberg-MacLane spectrum of the constant Mackey functor. See for instance~\cite{balmer-gallauer:Dperm}.

\smallbreak
\subsection*{The spectrum}
We already computed in~\cite{balmer-gallauer:TTG-Perm} the spectrum of the permutation category $\cK(G;\kk)$ when $G$ is a \emph{finite} group.
So our first task is to `pass these results to the limit'.
This relies on the general continuity result of~\cite{gallauer:tt-fmod}, yielding for every profinite group~$G$ a homeomorphism (\Cref{Prop:Spc-K(G)-colim})
\begin{equation}
\label{eq:Spc-lim-intro}%
\Spc(\cK(G;\kk))\cong\lim_{N\normaleop G}\ \Spc(\cK(G/N;\kk))
\end{equation}
where $N\normaleop G$ stands for the open normal subgroups of~$G$ and where the transition maps between the spectra~$\Spc(\cK(G/N;\kk))$ on the right-hand side of~\eqref{eq:Spc-lim-intro} are induced by inflation between the finite groups~$G/N$.
With this, we can describe all points of the spectrum of $\cK(G;\kk)$ for all profinite groups~$G$ as follows:
\begin{Thm}
\label{Thm:all-pts-intro}%
If $\chara(\kk)=0$ then $\SpcKGk$ is a single point, \ie the only tt-prime is zero.
If $\chara(\kk)=p>0$ then every tt-prime in~$\cK(G;\kk)$ is of the form~$\cP_G(H;\gp)$ for a closed pro-$p$-subgroup~$H\le G$ and a homogeneous prime~$\gp$ in the cohomology of the Weyl group of~$H$, for a unique pair~$(H,\gp)$ up to $G$-conjugation.
\end{Thm}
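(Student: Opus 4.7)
The plan is to combine the continuity homeomorphism~\eqref{eq:Spc-lim-intro} with the description of $\Spc(\cK(\Gamma;\kk))$ for finite groups $\Gamma$ established in~\cite{balmer-gallauer:TTG-Perm}. Recall from \loccit\ that for finite $\Gamma$ the spectrum is a single point in characteristic zero, and in characteristic $p$ its points are exactly the tt-primes $\cP_\Gamma(K;\gp)$ with $K$ a $p$-subgroup of $\Gamma$ and $\gp$ a homogeneous prime in $\rmH^*(\Weyl{\Gamma}{K};\kk)$, the pair $(K,\gp)$ being unique up to $\Gamma$-conjugation. With this, the $\chara(\kk)=0$ case of the theorem is immediate: each factor $\Spc(\cK(G/N;\kk))$ on the right-hand side of~\eqref{eq:Spc-lim-intro} is a singleton, so the inverse limit is as well.

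Assume henceforth $\chara(\kk)=p>0$. A point of $\SpcKGk$ is, via~\eqref{eq:Spc-lim-intro}, a compatible family $(\cP_N)_{N\normaleop G}$ with $\cP_N\in\Spc(\cK(G/N;\kk))$, coherent under the inflation-induced maps for $N\le N'\normaleop G$. Writing $\cP_N=\cP_{G/N}(K_N;\gp_N)$ via the finite-group result, the first task is to identify how the transition maps act on the $(K,\gp)$-parameters: I expect that they send $\cP_{G/N}(K;\gp)$ to $\cP_{G/N'}(\pi_{N,N'}(K);\pi_{N,N'}^*\gp)$, where $\pi_{N,N'}\colon G/N\onto G/N'$ is the quotient and $\pi_{N,N'}^*$ is the map induced on Weyl-group cohomology. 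This functoriality should follow directly from the construction of the primes~$\cP_\Gamma(K;\gp)$ in terms of modular fixed-points and cohomological supports.

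Granted such compatibility, one assembles the data as follows. After choosing coherent conjugate representatives of the $K_N$, their preimages in $G$ form a descending family of closed subgroups whose intersection $H$ is a closed pro-$p$-subgroup mapping to (a conjugate of) $K_N$ in each $G/N$. Its Weyl group is then the profinite limit $\WGH=\lim_N\Weyl{(G/N)}{K_N}$, whose continuous mod-$p$ cohomology is the filtered colimit $\colim_N\rmH^*(\Weyl{(G/N)}{K_N};\kk)$; accordingly the $\gp_N$ glue into a unique homogeneous prime $\gp$ in $\rmH^*(\WGH;\kk)$. Conversely each such pair $(H,\gp)$ produces a compatible family and hence a tt-prime $\cP_G(H;\gp)$, and uniqueness up to $G$-conjugation is inherited from the finite levels via a limit of finite $G$-sets of conjugacy classes.

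The main obstacle is precisely the conjugation ambiguity: at each finite level only the $(G/N)$-conjugacy class of $K_N$ is determined, so the literal preimages in $G$ need not nest. One must select strictly compatible representatives across the cofiltered system of open normal subgroups, a coherence problem most cleanly resolved either by a compactness argument in the space of closed subgroups of $G$ under the Chabauty topology, or by a K\"onig-type argument on the inverse system of finite conjugacy classes of $p$-subgroups. Once $H$ is pinned down, the remaining steps---gluing of cohomology primes and uniqueness---are standard continuity assertions for profinite cohomology.
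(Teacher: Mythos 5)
Your proposal follows essentially the same route as the paper: the characteristic-zero case falls out of the continuity homeomorphism since each finite level is a point; in characteristic $p$ one writes a point as a compatible family of finite-level primes $\cP_{G/N}(K_N;\gp_N)$, tracks the inflation transition maps, resolves the conjugation ambiguity by a K\"onig-type compactness argument on the inverse system of finite conjugacy sets of $p$-subgroups (this is exactly the paper's \Cref{Lem:Sub-mod-G-continuous}), and then glues the $\gp_N$ via the identification $\Vee{\WGH}\cong\lim_N\Vee{\Weyl{(G/N)}{K_N}}$ as in \Cref{Rem:Vee-Dirac}. The only minor inaccuracies are notational: the transition on cohomology parameters is a contraction of primes along the inflation ring map rather than a pushforward, and the resulting $H$ is characterized by $NH\sim_G H_N$ for all $N$ rather than being literally an intersection of nested preimages, but you flag the coherence issue yourself and the intended argument coincides with the paper's.
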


These primes~$\cP_G(H;\gp)$ are explicit.
Let us denote by $\WGH=(N_G(H))/H$ the Weyl group of the subgroup~$H$ of~$G$.
As in the finite group case, there exists a tt-functor called the \emph{modular $H$-fixed points functor}~$\Psi^H\colon\cK(G;\kk)\to \cK(\WGH;\kk)$ that $\kk$-linearizes the $H$-fixed points on finite $G$-sets (\Cref{Cons:Psi^H-pro}).
We can compose it with the obvious functor to the derived category
\[
\cK(G;\kk)\to \cK(\WGH;\kk)\onto \Db(\kk(\WGH))
\]
and get a tt-functor from our $\cK(G;\kk)$ to the derived category of the Weyl group of~$H$.
Via this tt-functor, we can pull-back every tt-prime of $\Db(\kk(\WGH))$; the latter correspond to homogeneous primes~$\gp$ of the cohomology of~$\WGH$.
These pull-backs are our primes~$\cP_G(H;\gp)$.
The statement of \Cref{Thm:all-pts-intro} is that every prime of~$\cK(G;\kk)$ is of this form, for a unique choice of~$H$ and~$\gp$ up to $G$-conjugation.

We outline in \Cref{Rem:outline}, without elaborating, some alternative descriptions of the topology of~$\SpcKGk$ via pro-elementary abelian groups or via twisted cohomology. These themes have been explored in depth in~\cite{balmer-gallauer:TTG-Perm} in the finite case and would appear repetitive here.
We find it more instructive to discuss an example in some detail.

\smallbreak
\subsection*{Artin motives over a finite field}
Let us illustrate our methods by computing the spectrum of geometric Artin motives over a \emph{finite} base field~$\bbF$.
In other words, we consider $\cK(G;\kk)$ for $G=\hat{\bbZ}$, the profinite integers.
See \Cref{sec:artin-finite-fields}.

Assume $\kk$ is a field of characteristic $p>0$.
The $p$-Sylow of any finite quotient $G/N$ is cyclic of order $p^n$, for some $n\geq 0$.
We proved in~\cite{balmer-gallauer:TTG-Perm} that the spectrum of $\cK(G/N;\kk)$ is then the space
\[
\W^n=\qquad
\vcenter{\xymatrix@R=1em@C=.7em{
{\scriptstyle\gm_0\kern-1em}
& {\bullet} \ar@{-}@[Gray] '[rd] '[rr] '[drrr]
&&
{\bullet}
&{\kern-1em\scriptstyle\gm_1}
&&{\scriptstyle\gm_{n-1}\kern-1em}&\bullet&&\bullet&{\kern-1em\scriptstyle\gm_n}
\\
&{\scriptstyle\gp_1\kern-1em}& {\bullet}&&& {\cdots}& \ar@{-}@[Gray] '[ru] '[rr] '[rrru] &&\bullet&{\kern-1em}\scriptstyle\gp_{n}}}
\]
consisting of $2n+1$ points, with specialization relations going upward.
(We also denote by $\W^\infty=\colim_n\W^n$ the union of these along the canonical inclusions.)
Taking the limit of the~$\W^n$ as in~\eqref{eq:Spc-lim-intro} yields:

\begin{Thm}
\label{Thm:Fq-intro}%
Let $\bbF$ be a finite field.
If $\chara(\kk)=0$ then $\Spc(\DAM(\FF;\kk))=\ast$. If $\chara(\kk)>0$ then $\Spc(\DAM(\FF;\kk))$ is the Alexandroff extension of~$\W^\infty$:
\begin{equation}
\label{eq:Spc(DAM-mod-p)-intro}%
\vcenter{\xymatrix@R=1em@C=.7em{
{\scriptstyle\gm_0\kern-1em}
& {\bullet} \ar@{-}@[Gray] '[rd] '[rr] '[drrr]
&&
{\bullet}
&{\kern-1em\scriptstyle\gm_1}
&{\cdots}&{\scriptstyle\gm_{n-1}\kern-1em}&\bullet&&\bullet&{\kern-1em\scriptstyle\gm_n}&{\cdots}&&\bullet&{\kern-1em\scriptstyle\gm_{\infty}}
\\
&{\scriptstyle\gp_1\kern-1em}& {\bullet}&&& {\cdots}& \ar@{-}@[Gray] '[ru] '[rr] '[rrru] '[rrrr] &{\scriptstyle\gp_{n}}\kern-1em&\bullet &&&{\cdots}}}
\end{equation}
A subset $V$ is Thomason if and only if (i) $V$ is stable under specializations and (ii) $\gm_\infty\in V$ implies $V$ cofinite.
These subsets correspond bijectively with thick tensor-ideals of~$\DAM(\FF;\kk)$.
In particular, there are continuum many.
\end{Thm}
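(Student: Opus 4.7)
The plan is to transport the statement to the permutation category and combine the enumeration of tt-primes (\Cref{Thm:all-pts-intro}) with the continuity formula~\eqref{eq:Spc-lim-intro}.

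By Voevodsky's equivalence (\Cref{Rec:Artin-motives}) and $\Gal(\bbF^{\mathrm{sep}}/\bbF)\cong\hat\bbZ$ for $\bbF$ finite, it suffices to compute $\Spc(\cK(\hat\bbZ;\kk))$. The characteristic-zero case is immediate from \Cref{Thm:all-pts-intro}. Assume $\chara(\kk)=p>0$ and enumerate the primes via \Cref{Thm:all-pts-intro}. The closed pro-$p$ subgroups $H\le\hat\bbZ$ are $p^k\bbZ_p$ for $k\ge 0$ together with $H=0$, and the continuous mod-$p$ cohomology of the corresponding Weyl group $\hat\bbZ/H$ reduces to that of its $p$-part. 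For $H=\hat\bbZ$ (trivial Weyl) we obtain the single point $\gm_0$; for $H=p^k\bbZ_p$ with $k\ge 1$ (Weyl $p$-part $\bbZ/p^k$) a pair $\gp_k,\gm_k$; and for $H=0$ the Weyl group is $\hat\bbZ$ itself of $p$-cohomological dimension $1$ (cohomology $\kk[y]/(y^2)$ with $|y|=1$), contributing the single homogeneous prime $\gm_\infty$.

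For the topology, apply~\eqref{eq:Spc-lim-intro}. After cofinally replacing the inverse system of finite quotients of $\hat\bbZ$ by the subsystem of $p$-power quotients $\bbZ/p^n$ (the prime-to-$p$ factor being invisible to $\Spc(\cK(-;\kk))$ in characteristic $p$), one obtains $\Spc(\cK(\hat\bbZ;\kk))\cong\lim_n\W^n$. The key computation is the transition map $f_n\colon\W^{n+1}\to\W^n$ induced by inflation along $\pi\colon\bbZ/p^{n+1}\onto\bbZ/p^n$: using the compatibility $\Psi^K\circ\pi^*\cong\pi_W^*\circ\Psi^{\pi(K)}$ of modular fixed points (\Cref{Cons:Psi^H-pro}) with inflation, one tracks the image of each pro-$p$-subgroup under~$\pi$. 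For positive-codimension $K\le\bbZ/p^{n+1}$ the induced $\pi_W$ is an isomorphism of Weyl groups and primes match identically; while for $K=0$ the map $\pi_W=\pi$ induces the classical inflation $\rmH^*(\bbZ/p^n;\kk)\to\rmH^*(\bbZ/p^{n+1};\kk)$, which sends the degree-$2$ generator to zero (by a short extension-theoretic check), so both primes of $\Db(\kk(\bbZ/p^{n+1}))$ pull back to the maximal prime of $\Db(\kk(\bbZ/p^n))$. In the labels of~\eqref{eq:Spc(DAM-mod-p)-intro}, $f_n$ fixes $\{\gm_0,\gp_1,\gm_1,\ldots,\gp_n,\gm_n\}\subset\W^{n+1}$ and collapses both $\gp_{n+1},\gm_{n+1}\in\W^{n+1}$ onto $\gm_n$.

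Compatible sequences in $\lim_n\W^n$ are then either eventually constant (giving the points of $\W^\infty=\colim_n\W^n$) or the strictly ascending sequence $x_n=\gm_n$ (giving a single new point $\gm_\infty$), and the inverse-limit spectral topology coincides with the Alexandroff extension of~$\W^\infty$. This yields~\eqref{eq:Spc(DAM-mod-p)-intro} and the Thomason characterization: (i) is specialization-closedness and (ii) translates the quasi-compact complement condition at infinity. Balmer's classification then gives the bijection with thick tensor-ideals (automatically radical by rigidity of $\cK(\hat\bbZ;\kk)$); the continuum cardinality is immediate since already the specialization-closed subsets of $\W^\infty$ avoiding $\gm_\infty$ are parameterized by arbitrary subsets of $\{\gp_i\}_{i\ge 1}$.

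The main obstacle is the transition-map computation---verifying in particular that inflation sends the degree-$2$ generator of cyclic-group cohomology to zero, so that a unique new point $\gm_\infty$ (rather than two) emerges in the limit. Once the $f_n$ are pinned down, the identification with the Alexandroff topology and the classification of thick tensor-ideals are formal consequences.
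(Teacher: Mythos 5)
Your proposal is correct and essentially retraces the paper's own route: translate to $\cK(\hat\bbZ;\kk)$ via Voevodsky's equivalence, reduce to the pro\nobreakdash-$p$\nobreakdash-Sylow $\Zp$, then compute $\lim_n\W^n$ via the continuity formula and identify the transition maps. Two remarks on precision. First, the passage from $\hat\bbZ$ to $\bbZ_p$ is not literally a cofinal replacement of the inverse system of finite quotients (the $p$\nobreakdash-power quotients are \emph{not} cofinal among all $\bbZ/m$); what is really used, as in \Cref{Exa:pronilpotent} and \Cref{Rem:procyclic-Zp}, is that $\hat\bbZ$ is abelian so the inclusion of its pro\nobreakdash-$p$\nobreakdash-Sylow splits, making $\rho_{\Zp}\colon\Spc(\cK(\Zp))\to\Spc(\cK(\hat\bbZ))$ a homeomorphism (retraction plus surjectivity). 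Second, you assert that ``the inverse-limit spectral topology coincides with the Alexandroff extension of $\W^\infty$'' but this is precisely where the work lies; the paper does not take this for granted but instead computes the supports $\supp(c)$ for $c\in\cK(\Zp)$ (using that every object is inflated from a finite stage) and then appeals to the general fact that closed subsets of $\Spc$ are arbitrary intersections of supports of objects. Your claim is true, but a complete argument should either reproduce that supports computation or give a direct verification that the inverse-limit topology has the stated closed sets. Your independent re-derivation of the transition map $\W^{n+1}\onto\W^n$ from the compatibility in \Cref{Rem:Psi-colim} and the classical fact that inflation $\rmH^*(\bbZ/p^n;\kk)\to\rmH^*(\bbZ/p^{n+1};\kk)$ kills the degree\nobreakdash-$2$ generator is correct and nicely self-contained (the paper cites \cite[Lemma~8.5]{balmer-gallauer:TTG-Perm} for this). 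The enumeration of points via \Cref{Thm:all-pts-intro} and the count of tt-ideals are fine.
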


The integral spectrum $\Spc(\DAM(\FF;\bbZ))$ follows by analyzing the fibers of the continuous map to $\Spec(\bbZ)$. Above the generic point~$(0)$, the fiber is a single point. Above each closed point of~$\Spec(\bbZ)$ the fiber is a copy of~\eqref{eq:Spc(DAM-mod-p)-intro}.
See~\Cref{Cor:Fq-stratification-over-Z}.

\smallbreak
\subsection*{Stratification}
We can consider the big derived category of permutation modules~$\DPerm(G;\kk)$, that admits our~$\cK(G;\kk)$ as its compact objects.
In the case of a finite group~$G$, we proved in \cite[\S\,9]{balmer-gallauer:TTG-Perm} that~$\DPerm(G;\kk)$ satisfies \emph{BHS-stratification}, following Barthel-Heard-Sanders~\cite{barthel-heard-sanders:stratification-Mackey}. This means that its spectrum is reasonable, namely is a weakly noetherian space (it is even noetherian for $G$ finite), and that the ensuing support theory on the big objects of~$\DPerm(G;\kk)$ classifies all localizing tensor-ideals. (See \Cref{Rec:BHS-stratification}.)
It is natural to ask whether this result also `passes to the limit' to profinite groups. This theoretical problem of the behavior of stratification under colimit has not been solved in glorious generality. We expect it to be non-trivial since big tt-categories for which stratification fails can still be the colimit of tt-categories that satisfy BHS-stratification (\eg\ for non-noetherian commutative rings viewed as the colimit of their noetherian subrings).
In fact, for a general profinite group~$G$, the spectrum~$\Spc(\cK(G;\kk))$ may even cease to be weakly noetherian. See~\Cref{Exa:non-weakly-noetherian}.

Nevertheless, for $G=\hat{\bbZ}$ and $\chara(\kk)=p$, we obtain the spectrum of \Cref{Thm:Fq-intro} and although it is not noetherian (\Cref{Rem:Zp-not-noeth}), it happens to still be \emph{weakly} noetherian (\Cref{Rem:Spc-Z_p-weakly-noetherian}).
And we do prove in \Cref{Thm:Z_p-stratification} that $\DPerm(G;\kk)$ is indeed BHS-stratified in that special case.
This is a little miracle of procyclic groups. Putting things together, we obtain:
\begin{Thm}
\label{Thm:Fq-stratification-intro}%
Let $\bbF$ be a finite field and $\kk$ an arbitrary field.
Then the big derived category of Artin motives~$\DAMbig(\bbF;\kk)$ is stratified in the sense of~\cite{barthel-heard-sanders:stratification-Mackey}{\rm:} The spectrum of its compacts~$\DAM(\bbF;\kk)$ is weakly noetherian, and its localizing tensor-ideals are in one-to-one correspondence with the subsets of~$\Spc(\DAM(\bbF;\kk))$.

Moreover, it satisfies the telescope property: The smashing localizing ideals are in one-to-one correspondence with the finite ones.
\end{Thm}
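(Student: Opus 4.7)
The plan is to reduce this statement to the representation-theoretic \Cref{Thm:Z_p-stratification} via Voevodsky's equivalence. Since $\FF$ is a finite field, its absolute Galois group is the procyclic group~$\hat{\bbZ}$, so the comparison recalled in \Cref{Rec:Artin-motives} gives a tt-equivalence on compacts $\DAM(\FF;\kk)\simeq \cK(\hat{\bbZ};\kk)$. The first preparatory step is to lift this to a tt-equivalence on the big sibling categories $\DAMbig(\FF;\kk)\simeq \DPerm(\hat{\bbZ};\kk)$ preserving compact generators. This should be essentially formal once one knows that both big categories are compactly generated by their equivalent subcategories of compacts; no new geometric input is needed.

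Granting this identification, the theorem becomes a statement about $\DPerm(\hat{\bbZ};\kk)$. If $\chara(\kk)=0$, \Cref{Thm:all-pts-intro} shows the spectrum is a single point, so weak noetherianity is automatic, BHS-stratification reduces to the assertion that any nonzero localizing tensor-ideal is the whole category, and the telescope property is vacuous. In the essential case $\chara(\kk)=p>0$, \Cref{Thm:Fq-intro} makes the spectrum explicit as the Alexandroff extension of~$\W^\infty$, and weak noetherianity is furnished by \Cref{Rem:Spc-Z_p-weakly-noetherian}. BHS-stratification in this case is exactly \Cref{Thm:Z_p-stratification}, from which the one-to-one correspondence between localizing tensor-ideals and arbitrary subsets of the spectrum follows by definition. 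For the telescope property, one invokes the explicit description of the Thomason subsets given in \Cref{Thm:Fq-intro} to match the smashing ideals with the finitely-generated ones.

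In short, the present theorem is essentially a translation of \Cref{Thm:Z_p-stratification} across Voevodsky's equivalence. The main obstacle is therefore not here but in that upstream theorem: as the excerpt already flags, BHS-stratification is not known to pass through the inverse limit~\eqref{eq:Spc-lim-intro} of spectra for general profinite groups, and indeed the spectrum can fail to be weakly noetherian. The procyclic case $G=\hat{\bbZ}$ is a little miracle, presumably exploited through the linearly ordered tower of closed pro-$p$-subgroups $\Zp\supset p\Zp\supset p^2\Zp\supset\cdots$ and the corresponding modular fixed-points functors~$\Psi^H$, which mirror the linear shape of~$\W^\infty$ and thereby permit a controlled passage to the limit that is unavailable for more complicated profinite groups.
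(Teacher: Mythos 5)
Your overall reduction — Voevodsky's equivalence identifies $\DAMbig(\bbF;\kk)$ with $\DPerm(\hat{\bbZ};\kk)$, whence the problem becomes representation-theoretic — matches the paper's route, as does your identification of \Cref{Thm:Z_p-stratification} as the key ingredient. But there is a genuine gap in the middle: you write that ``BHS-stratification in this case is exactly \Cref{Thm:Z_p-stratification}'' and that the present theorem is ``essentially a translation of \Cref{Thm:Z_p-stratification} across Voevodsky's equivalence.'' This conflates $\cT(\hat{\bbZ})$ with $\cT(\Zp)$. The absolute Galois group of a finite field is $\hat{\bbZ}=\prod_\ell \ZZ_\ell$, not $\Zp$, and \Cref{Thm:Z_p-stratification} is stated only for $\cT(\Zp)$. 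Knowing that $\rho_{\Zp}\colon\Spc(\cK(\Zp))\to\Spc(\cK(\hat{\bbZ}))$ is a homeomorphism (\Cref{Rem:procyclic-Zp}) is not by itself enough to transport BHS-stratification: a homeomorphism of spectra of compacts says nothing a priori about the lattice of localizing tensor-ideals of the big categories on either side. The paper closes this gap with \Cref{Cor:stratification-along-Res}, which descends stratification along $\Res^{\hat{\bbZ}}_{\Zp}$. That corollary has real content: it needs $\rho_{\Zp}$ to be a closed map (\Cref{Prop:rho-closed}, itself relying on the passage-to-the-limit \Cref{Lem:closed-limit}), the split-faithfulness of restriction to a subgroup of index prime to $p$ on the \emph{big} categories (\Cref{Prop:Res-split-faithful}, which is non-trivial and uses an endofiniteness/phantom-map argument), and the abstract descent criterion \Cref{Prop:descent-stratification}. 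None of this is ``essentially formal,'' and your sketch does not account for it.

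A smaller imprecision: for the telescope property you propose ``matching the smashing ideals with the finitely-generated ones'' via the explicit Thomason subsets of \Cref{Thm:Fq-intro}. The paper instead observes that $\bar\W^\infty$ is \emph{generically noetherian} (\Cref{Lem:Spc-Z_p-generically-noetherian}) and invokes the general result \cite[Theorem~9.11]{barthel-heard-sanders:stratification-Mackey}, which gives telescope once one already has stratification over a generically noetherian space; there is no need for a hands-on matching. Your closing speculation about the mechanism behind \Cref{Thm:Z_p-stratification} (modular fixed-points along the tower $\Zp\supset p\Zp\supset\cdots$) also does not reflect the actual proof, which argues via fully-faithful localizations inflated from the finite quotients $C_{p^{n+1}}$ (\Cref{Lem:fully-faithful-localization}) for the points of $\W^\infty$, and via the $p$-cohomological dimension one of $\Zp$ together with hereditary-category minimality (\Cref{Prop:hereditary-minimality}) for the Alexandroff point $\gm_\infty$; but since you are citing that theorem rather than reproving it, this last point is harmless.
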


The outline of the paper is straightforward.
We recall basics in \Cref{sec:basics} and discuss modular fixed-points over profinite groups.
We prove the homeomorphism~\eqref{eq:Spc-lim-intro} and deduce \Cref{Thm:all-pts-intro} in \Cref{sec:spectrum-KG}.
That section also contains general topological properties of the maps induced by restriction and modular fixed-points.
In \Cref{sec:procyclic}, we illustrate our work by computing the spectrum in the example of procyclic groups.
We then turn attention to BHS-stratification and gather a couple of preparatory tt-results in \Cref{sec:stratification}, that could be of independent interest.
In \Cref{sec:stratification-procyclic}, we prove BHS-stratification in the procyclic case.
We conclude the article with the translation to Artin motives over finite fields in the brief \Cref{sec:artin-finite-fields}.

\begin{Ack}
We thank an anonymous referee for a thorough reading of the text.

For the purpose of open access, the authors have applied a Creative Commons Attribution (CC-BY) licence to any Author Accepted Manuscript version arising from this submission.
\end{Ack}

\section{Basics}
\label{sec:basics}%


%
\begin{Hyp}
\label{Hyp:general}%
Unless otherwise stated, $G$ is a profinite group and $\kk$ is a field of positive characteristic~$p$.
\end{Hyp}

\begin{Conv}
All subgroups $H\le G$ will be closed.
We write $H\leop G$ for open subgroups (that is, closed and of finite index).
We denote by $\Sub{}(G)$ the poset of (closed) subgroups.
\end{Conv}

\begin{Rec}
\label{Rec:perm}%
To every open subgroup~$H\leop G$ there is an associated finite-dimensional (left) $\kk G$-module~$\kk(G/H)$.
The underlying $\kk$-vector space has~$G/H$ as basis, with obvious left $G$-action.
It belongs to the category~$\Mod(G;\kk)$ of discrete~$(G;\kk)$-modules, cf.~\cite[Section~2]{balmer-gallauer:Dperm} for details.
We let $\perm(G;\kk)$ be the additive closure of all these objects~$\kk(G/H)$, $H\leop G$, in~$\Mod(G;\kk)$.
It is the category of (finitely generated) \emph{permutation modules} over~$G$ with coefficients in~$\kk$.
We recall from~\eqref{eq:K(Gamma)} that the main character in this paper is the rigid tt-category
\begin{equation*}
\cK(G)=\cK(G;\kk):=\Kb(\perm(G;\kk))^{\natural}=\Kb(\perm(G;\kk)^{\natural})
\end{equation*}
obtained from~$\perm(G;\kk)$ by taking bounded complexes and maps up to homotopy and by idempotent-completing (in any order).
We often abbreviate~$\cK(G)$ for~$\cK(G;\kk)$, to lighten notation.
\end{Rec}

\begin{Prop}
\label{Prop:K(G)-union}
Let $(G_\alpha)_\alpha$ be an inverse system of profinite groups (\eg an inverse system of finite groups) with surjective transition maps, and denote by~$G=\lim_\alpha G_\alpha$ its limit.
The inflation functor $\cK(G_\alpha)\to\cK(G)$ is fully faithful and the category~$\cK(G)$ is the union of the tt-categories~$\cK(G_\alpha)$.
\end{Prop}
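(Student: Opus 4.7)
The plan is to treat full faithfulness and the union statement separately, with the first being essentially formal and the second requiring a short argument in profinite topology.

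\textbf{Full faithfulness.} For each index~$\alpha$, the projection $p_\alpha\colon G\onto G_\alpha$ is a continuous surjection, so the $G$-action on any inflation $\Infl_{p_\alpha}(M)$ factors through~$G_\alpha$. Consequently, a $\kk$-linear map between inflated modules is $G$-equivariant if and only if it is $G_\alpha$-equivariant, and inflation $\Mod(G_\alpha;\kk)\to\Mod(G;\kk)$ is fully faithful. It restricts to $\perm(G_\alpha;\kk)\to\perm(G;\kk)$ because $p_\alpha\inv(H)\leop G$ whenever $H\leop G_\alpha$, and $\Infl\,\kk(G_\alpha/H)\cong\kk(G/p_\alpha\inv(H))$. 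Full faithfulness then transfers to $\Kb(\perm(-;\kk))$ (whose morphisms are computed from morphisms in the additive category modulo homotopy) and finally to the idempotent completions~$\cK(G_\alpha)\to\cK(G)$.

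\textbf{Every open subgroup of~$G$ comes from some~$G_\alpha$.} This is the only non-formal point. Let $H\leop G$. Because $H$ is an open neighborhood of~$1$ in the limit topology on $G\subseteq\prod_\alpha G_\alpha$, there exist finitely many indices $\alpha_1,\ldots,\alpha_n$ and open neighborhoods $V_i\ni 1$ in~$G_{\alpha_i}$ with $H\supseteq G\cap\bigcap_i p_{\alpha_i}\inv(V_i)$. The inverse system being cofiltered, I pick $\alpha$ dominating all the~$\alpha_i$; pulling the~$V_i$ back along the transition maps and intersecting produces an open neighborhood $V\ni 1$ in~$G_\alpha$ with $H\supseteq p_\alpha\inv(V)$. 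Since $G_\alpha$ is profinite, $V$ contains some open subgroup $U\leop G_\alpha$, hence $H\supseteq p_\alpha\inv(U)\supseteq\ker(p_\alpha)$. It follows that $H=p_\alpha\inv(p_\alpha(H))$ and that $p_\alpha(H)\leop G_\alpha$ (it is the continuous image of a compact set, so closed, and it contains an open subgroup).

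\textbf{Union of essential images.} For any $X\in\cK(G)$, choose a representing bounded complex $Y$ of permutation modules and an idempotent $e$ on~$Y$ with $X$ a summand. Only finitely many generators $\kk(G/H_j)$ occur in~$Y$; by the previous step and cofilteredness there is a single~$\alpha$ such that every $H_j$ equals $p_\alpha\inv(p_\alpha(H_j))$, so that each $\kk(G/H_j)$ lies in the essential image of $\perm(G_\alpha;\kk)\to\perm(G;\kk)$. Full faithfulness of $\Kb(\perm(G_\alpha;\kk))\to\Kb(\perm(G;\kk))$ then lifts the differentials of~$Y$ and the idempotent~$e$ to~$\cK(G_\alpha)$, exhibiting $X$ in the image of $\cK(G_\alpha)\to\cK(G)$.

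The only real obstacle is the topological step that extracts an index~$\alpha$ from an arbitrary open $H\leop G$; once this is in hand, the rest is bookkeeping along fully faithful functors.
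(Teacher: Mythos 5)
Your proof is correct and follows essentially the same path as the paper's: both rest on the observation that inflation is fully faithful and that every object of $\cK(G)$ involves only finitely many $\kk(G/H)$, each of which descends to some finite stage; the paper phrases the second step by first inflating from a finite quotient $G/N$ and then factoring $G\to G/N$ through some $G_\alpha$, while you argue directly that each open $H\leop G$ equals $p_\alpha\inv(p_\alpha(H))$, but this is only a cosmetic difference.
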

\begin{proof}
Inflation functors between finite groups are fully-faithful, as usual.
This implies the same result for profinite groups.
And any object in~$\cK(G)$ comes through inflation from $G/N$, for some $N\normaleop G$.
The projection $G\to G/N$ factors through some $G_\alpha$.
This proves the proposition.
\end{proof}

One can Ind-complete~$\cK(G)$:
\begin{Not}
\label{Not:DPerm}%
Following~\cite[Section~3]{balmer-gallauer:Dperm}, we denote by
\[
\cT(G):=\DPerm(G;\kk)
\]
the localizing subcategory generated by~$\cK(G)$ inside~$\K(\Mod(G;\kk))$.
It is a rigidly-compactly generated `big' tt-category with compact part $\cK(G)$.
\end{Not}

\begin{Def}
\label{Def:index-prime-to-p}%
We say that a closed subgroup $H\le G$ is of \emph{index prime to~$p$} if every open subgroup~$K\leop G$ containing $H$ has (usual, finite) index prime to~$p$.
We say that $G$ has \emph{order prime to~$p$} if $H=1$ has index prime to~$p$, meaning that~$G$ is an inverse limit of finite groups of order prime to~$p$.
\end{Def}

\begin{Rec}
\label{Rec:p-Sylow}%
A minimal subgroup $H\le G$ of index prime to~$p$ is called a \emph{$p$-Sylow subgroup}.
A profinite group~$G$ is a \emph{pro-$p$-group} if it is an inverse limit of finite $p$-groups, or equivalently, if every finite quotient of~$G$ is a $p$-group.
A $p$-Sylow subgroup is necessarily a pro-$p$-group.
It is maximal among subgroups with this property.
In fact, much of the Sylow theory familiar from finite groups generalizes to profinite groups.
We refer to~\cite[Chapter~2]{ribes-zalesskii:profinite} for more details.
\end{Rec}

\begin{Rem}
\label{Rem:Spc-to-Sylow}%
Let $H\le G$ be a subgroup of index prime to~$p$.
If~$G$ is finite, the restriction functor $\Res^G_H\colon \cK(G)\to \cK(H)$ is faithful (by the usual argument) and, as a consequence, the map~$\rho_H=\Spc(\Res^G_H)\colon \Spc(\cK(H))\onto \Spc(\cK(G))$ induced on spectra is surjective, see~\cite[Proposition~3.8]{balmer-gallauer:TTG-Perm}.
One deduces the same statements for~$G$ profinite as everything is inflated from a finite quotient.

Similarly, if $\chara(\kk)=0$, we obtain the first claim of \Cref{Thm:all-pts-intro}.

We will now show that the restriction functor $\Res^G_H\colon \cT(G)\to \cT(H)$ is also faithful on the big categories.
Let us write $\CoInd_H^G$ for its right adjoint.
\end{Rem}
\begin{Prop}
\label{Prop:Res-split-faithful}
Let $H\le G$ be of index prime to~$p$.
Then $\unit_{\cT(G)}$ belongs to $\Loctens{\CoInd_H^G(\unit)}$ and the unit $\eta\colon\Id\to \CoInd^G_H\Res^G_H$ admits a retraction.
In particular, the functor $\Res^G_H:\cT(G)\to\cT(H)$ is (split) faithful.
\end{Prop}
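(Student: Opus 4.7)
The plan is to reduce both claims to producing a single retraction $r: \CoInd_H^G(\unit_H) \to \unit_G$ of the unit $\eta_{\unit_G}: \unit_G \to \CoInd_H^G(\unit_H)$. Granting such an $r$, the projection formula $\CoInd_H^G\Res_H^G(X) \simeq X \otimes \CoInd_H^G(\unit_H)$ identifies the general unit $\eta_X$ with $\id_X \otimes \eta_{\unit_G}$, and so yields retractions $\id_X \otimes r$ of every $\eta_X$. Naturality of $\eta$ together with these pointwise split monomorphisms then forces $\Res_H^G$ to be faithful: if $\Res_H^G(f)=0$ for $f: X \to Y$, then $\eta_Y \circ f = \CoInd_H^G\Res_H^G(f) \circ \eta_X = 0$, and applying the retraction of $\eta_Y$ yields $f=0$. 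The membership $\unit_G \in \Loctens{\CoInd_H^G(\unit_H)}$ follows at once, since $\unit_G$ is then a direct summand.

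To construct $r$, I would work at the level of the underlying discrete $(G;\kk)$-modules. There $\CoInd_H^G(\unit_H)$ is the module $C(G/H, \kk)$ of continuous functions on the profinite coset space $G/H$, and $\eta_{\unit_G}$ is the inclusion of constants. Any $f \in C(G/H,\kk)$ factors through some finite quotient $G/HN$ with $N \normaleop G$; the hypothesis that $H$ has index prime to $p$ makes $[G:HN]$ invertible in $\kk$, so I would set
\[
r(f) \;:=\; \frac{1}{[G:HN]}\sum_{x\in G/HN} f(x).
\]
Independence of the choice of $N$ comes from the relation $[G:HM]=[HN:HM]\cdot[G:HN]$ for $M\subseteq N$: passing to a finer quotient multiplies the inner sum by the fibre cardinality $[HN:HM]$ and the normalization by its inverse. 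This $r$ is manifestly $G$-equivariant and satisfies $r\circ \eta_{\unit_G}=\id_{\unit_G}$.

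The main structural input is the identification of $\CoInd_H^G(\unit_H)$ with $C(G/H,\kk)$ and of the unit with the inclusion of constants; alternatively, one may phrase the same construction as writing $\CoInd_H^G(\unit_H)\simeq \colim_{N\normaleop G}\kk(G/HN)$ in $\cT(G)$ with transition maps summing over the fibres of $G/HM\onto G/HN$, and gluing the finite-group retractions $r_N=[G:HN]^{-1}\varepsilon_N$ of the classical units $\eta_N : \unit \to \kk(G/HN)=\CoInd_{HN}^G(\unit_{HN})$. Either way, once the underlying description is pinned down, the argument is the standard averaging argument adapted to the profinite setting, with the hypothesis on the index entering exactly at the inversion of $[G:HN]$, and the formal implications of the proposition then follow as outlined in the first paragraph.
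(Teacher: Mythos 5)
Your proof is correct, but it takes a genuinely different route from the one in the paper. The paper's proof is quite abstract: it considers the homotopy fiber $\xi\colon t\to\unit$ of $\eta_\unit$, notes that $\Res^G_H(\xi)=0$ so that $\xi$ is a phantom map (using faithfulness of $\Res^G_H$ on compacts), and then invokes the general fact that phantom maps into \emph{endofinite} objects vanish (Krause), since $\unit=\kk[0]$ is endofinite. Your approach is the explicit one the paper only alludes to in the remark immediately following: construct the retraction of $\eta_\unit$ by hand as an averaging operator, compatibly across the finite quotients. Both work. Your version is more elementary and self-contained (no phantom maps, no endofiniteness), whereas the paper's sidesteps entirely the need to identify $\CoInd^G_H(\unit)$ concretely. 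That identification is the one point in your write-up that deserves a little more care: you must know that the module-theoretic coinduction $C(G/H,\kk)=\colim_{N\normaleop G}\kk(G/HN)$ really is the right adjoint of $\Res^G_H$ computed inside $\cT(G)\subseteq\K(\Mod(G;\kk))$. This holds because (i) $C(G/H,\kk)[0]$ is the coinduction in $\K(\Mod(G;\kk))$, and (ii) $C(G/H,\kk)[0]$ already lies in the localizing subcategory $\cT(G)$, so the right adjoint in $\cT(G)$ agrees with it. For (ii) one needs the filtered colimit $\colim_N\kk(G/HN)$ to be a \emph{homotopy} colimit in $\K(\Mod(G;\kk))$, which holds here because the transition maps $\kk(G/HN)\to\kk(G/HM)$ (summing over fibers) are split monomorphisms in $\Mod(G;\kk)$ --- their fiber cardinalities $[HN:HM]$ are prime to $p$, hence invertible. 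So the index-prime-to-$p$ hypothesis enters not only in the inversion of $[G:HN]$ in your formula for $r$, but also in making the colimit presentation of $\CoInd^G_H(\unit)$ legitimate. With that filled in, your argument is complete and is a valid alternative to the published proof.
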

\begin{proof}
Let $\xi\colon t\to \unit$ be a homotopy fiber in~$\cT(G)$ of the unit~$\eta_{\unit}\colon\unit \to \CoInd_H^G(\unit)$. We have $\Res^G_H(\xi)=0$ and since $\Res^G_H$ is faithful on compacts, this map $\xi$ must be a \emph{phantom map}: For every compact $c$, and every map $c\to t$ the composite $c\to t\xto{\xi}\unit$ is zero. On the other hand, $\unit=\kk[0]$ is easily seen to be \emph{endofinite}, \ie for every compact $c\in\cT(G)^c$, the group~$\Hom(c,\unit)$ is a module of finite length over the ring~$\End(\unit)=\kk$, simply because $\Hom(c,\unit)$ is finite dimensional. It is a general fact that phantom maps into endofinite objects are zero; see~\cite[Theorem~1.2\,(3)]{krause:decomposing}. Thus $\xi=0$ and the unit~$\eta_{\unit}$ is a split monomorphism. By the projection formula, the same holds for~$\eta\colon \Id\to \CoInd^G_H\Res^G_H\cong\CoInd^G_H(\unit)\otimes\Id$.
\end{proof}
\begin{Rem}
One can also construct a retraction of $\unit\to \CoInd(\unit)$ by hand, via a suitable homotopy colimit of the retractions at each finite stage.
\end{Rem}

Let us now turn to modular fixed-points.
\begin{Cons}
\label{Cons:Psi^H-pro}%
We may generalize the construction of modular fixed-points from \cite{balmer-gallauer:TTG-Perm}.
Let $N\normale G$ be a normal closed pro-$p$-subgroup.
The analogues of~Lemma~5.3 and~Proposition~5.4 in \cite{balmer-gallauer:TTG-Perm} are true and we may perform Construction~5.6 in \loccit
Indeed, let $H,K\leop G$ be open subgroups such that $N\le H$ and $N\not\le K$. By~\cite[Proposition~2.1.4(c)]{ribes-zalesskii:profinite}, there exists an open normal subgroup $U\normaleop G$ such that $NU\le H$. Choose an open normal subgroup~$M\normaleop G$ contained in $NU\cap K$. Then $NM\le H$ and $NM\not\le K$ and $NM/M\cong N/N\cap M$ is a finite $p$-group. One can therefore apply~Lemma~5.3 in \cite{balmer-gallauer:TTG-Perm} to $G/M$, $H/M$, $K/M$, $NM/M$ and subsequently use inflation to~$G$.
In summary, the constructions of \loccit yield the modular fixed-points functor $\Psi^{N\inn G}\colon \cT(G)\to \cT(G/N)$ for normal closed pro-$p$-subgroups~$N\normale G$.

If $H\le G$ is a closed pro-$p$-subgroup that is not necessarily normal then its normalizer is also closed and hence profinite.
We may therefore repeat \cite[Definition~5.10]{balmer-gallauer:TTG-Perm}.
In other words, we now have coproduct-preserving tt-functors
\[
\Psi^{H\inn G}\colon \cT(G)\to \cT(\Weyl{G}{H}),
\]
still called \emph{modular $H$-fixed-points}.
They are characterized by the facts that on permutation modules we have $\Psi^{H}(\kk(X))\cong \kk(X^H)$ for every finite discrete $G$-set~$X$ and these functors are applied degreewise on complexes.
\end{Cons}

\begin{Rem}
\label{Rem:Psi-colim}
Let $H\le G$ be a closed pro-$p$-subgroup and let $N\normaleop G$.
It is straightforward to check that the following square commutes (already at the level of additive categories of permutation modules):
\[
\xymatrix@C=5em{\cT(G)
  \ar[r]^{\Psi^{H}}
  &
  \cT(\Weyl{G}{H})
  \\
  \cT(G/N)
  \ar[u]_{\Infl}
  \ar[r]_-{\Psi^{(HN)/N}}
  &
  \cT(\Weyl{G}{(HN)})
  \ar[u]_{\Infl}
}
\]
Therefore, in combination with \Cref{Prop:K(G)-union}, we could also define the modular fixed-points functor~$\Psi^H$ for profinite groups as the unique extension of modular fixed-points for finite groups introduced in~\cite{balmer-gallauer:TTG-Perm}.
\end{Rem}


\section{General profinite groups}
\label{sec:spectrum-KG}%


Recall our standing \Cref{Hyp:general}.

\subsection{The spectrum}
We begin our discussion of the tt-geometry of $\cK(G)=\Kb(\perm(G;\kk))^\natural$ by applying the general continuity result of~\cite[Section~8]{gallauer:tt-fmod}.

\begin{Prop}
\label{Prop:Spc-K(G)-colim}%
Let $(G_\alpha)_\alpha$ be an inverse system of profinite groups with surjective transition maps, and denote by~$G=\lim_\alpha G_\alpha$ its limit.
On spectra, this yields a homeomorphism with the inverse limit
\begin{equation}
\label{eq:Spc-K(G)-colim}%
\Spc(\cK(G))=\lim_{\alpha}\ \Spc(\cK(G_\alpha))
\end{equation}
where the transition maps are surjective.
\end{Prop}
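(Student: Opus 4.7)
The plan is to interpret $\cK(G)$ as a filtered $2$-colimit and apply the cited general continuity result. By~\Cref{Prop:K(G)-union}, the inflation functors realize $\cK(G)$ as the filtered $2$-colimit $\colim_\alpha\cK(G_\alpha)$ of essentially small rigid tt-categories, with all structure functors fully faithful. The continuity theorem of~\cite[Section~8]{gallauer:tt-fmod} turns such a $2$-colimit into an inverse limit of spectral spaces on applying~$\Spc$; this gives the homeomorphism~\eqref{eq:Spc-K(G)-colim}, with transition maps computed as $\Spc$ of inflation.

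It remains to verify that each transition map
\[
\rho_{\beta,\alpha}\colon\Spc(\cK(G_\beta))\longrightarrow\Spc(\cK(G_\alpha))
\]
induced by a surjection $\pi\colon G_\beta\onto G_\alpha$ in the inverse system is surjective. I would first reduce to the case of finite groups: writing $G_\alpha$ and $G_\beta$ themselves as inverse limits of their finite quotients and applying the homeomorphism just established, it suffices to produce surjections on spectra for inflation along a surjection $\bar G\onto\bar G'$ of finite groups. There the point-set description from~\cite{balmer-gallauer:TTG-Perm} closes the argument: every tt-prime of $\cK(\bar G')$ has the form $\cP_{\bar G'}(\bar H;\gp)$ for a $p$-subgroup $\bar H\le\bar G'$ and a homogeneous prime~$\gp$ in the cohomology of the Weyl group; lifting $\bar H$ to a $p$-subgroup $H\le\bar G$ with $\pi(H)=\bar H$ (possible by Sylow theory applied to $\pi\inv(\bar H)$) yields a prime $\cP_{\bar G}(H;\gp)$ of $\cK(\bar G)$, and the compatibility of inflation with modular fixed-points (see~\Cref{Rem:Psi-colim}) ensures that $\rho_{\beta,\alpha}$ sends it to~$\cP_{\bar G'}(\bar H;\gp)$.

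The main point to be careful about is ensuring that $\cK(G)$ is genuinely a filtered $2$-colimit (not merely a union) in the sense required by the cited continuity result; this asks for full faithfulness of all inflation functors and a cofinality statement for the indexing, both of which are supplied by~\Cref{Prop:K(G)-union}. Once this is checked, the homeomorphism is formal, and the surjectivity of transition maps follows by the explicit lifting of primes in the finite-group case.
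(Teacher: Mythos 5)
Your proof of the homeomorphism part coincides with the paper's: both identify $\cK(G)$ as a filtered colimit via \Cref{Prop:K(G)-union} and apply the continuity theorem of~\cite[Section~8]{gallauer:tt-fmod}.

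For the surjectivity of the transition maps you take a genuinely different route, and it has a gap. Your plan is to classify primes explicitly (using~\cite{balmer-gallauer:TTG-Perm} at finite levels) and lift them by Sylow theory. The Sylow lift of the subgroup is fine, but the claim that this ``yields a prime $\cP_{\bar G}(H;\gp)$'' with \emph{the same} $\gp$ is not well-formed: $\gp$ lives in $\Vee{\Weyl{\bar G'}{\bar H}}$, whereas a prime of $\cK(\bar G)$ attached to $H$ requires a point of $\Vee{\Weyl{\bar G}{H}}$, and these are \textsl{a priori} different spaces. To repair this you would need to observe that the natural map $\Weyl{\bar G}{H}\to\Weyl{\bar G'}{\bar H}$ is a surjection whose kernel $N_{\pi^{-1}(\bar H)}(H)/H$ has order prime to~$p$ (since $H$ is a Sylow $p$-subgroup of $\pi^{-1}(\bar H)$), so that it induces an isomorphism on mod-$p$ cohomology and hence a homeomorphism on $\Vee{-}$; then pull $\gp$ back along this homeomorphism to the correct ring. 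Only then does the compatibility formula for inflation (the map $\bar\pi$ of~\cite[Remark~7.6\,(c)]{balmer-gallauer:TTG-Perm}, which you gesture at via \Cref{Rem:Psi-colim}) give what you want. You should also make explicit the step passing from surjectivity at finite levels to surjectivity of the limit map; this is standard via the constructible (patch) topology on spectral spaces, but it deserves a sentence.

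The paper's argument for surjectivity is much shorter and avoids all of this bookkeeping: inflation along a surjection of profinite groups is a \emph{faithful} tt-functor, and a faithful tt-functor induces a surjection on spectra by~\cite[Corollary~1.8]{balmer:surjectivity}. This applies directly to each transition functor $\cK(G_\alpha)\to\cK(G_\beta)$ with no reduction to finite groups and no explicit classification of primes. Your approach ``buys'' an explicit description of preimages of primes, which is nice to have but not needed here and, as written, is not quite correct without the repair above.
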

\begin{proof}
By \Cref{Prop:K(G)-union}, we have $\cK(G)=\colim_\alpha\cK(G_\alpha)$ with fully faithful transition functors.
By~\cite[Proposition~8.2]{gallauer:tt-fmod}, the functor $\Spc(-)$ turns such directed colimits of tt-categories into limits of spaces.
Surjectivity uses that inflation is faithful and~\cite[Corollary~1.8]{balmer:surjectivity}.
\end{proof}

\begin{Rec}
Keep the notation, $G=\lim_\alpha G_\alpha$ an inverse limit of profinite groups.
For the set~$\Sub{}(G)$ of closed subgroups we have the following bijection:
\begin{equation}
\label{eq:Sub-continuous}
\Sub{}(G)\isoto\lim_{\alpha}\Sub{}(G_\alpha).
\end{equation}
\end{Rec}

Let us write $\Sub{}(G)/G$ for the quotient of~$\Sub{}(G)$ under $G$-conjugation.
For the reader's convenience we recall the proof of the following result.
\begin{Lem}[{\cite[pp.~B8--9]{dress:lectures-notes-repfin-I}}]
\label{Lem:Sub-mod-G-continuous}
The canonical map is a bijection:
\begin{equation*}
\label{eq:Sub-mod-G-continuous}
\Sub{}(G)/G\isoto\lim_{\alpha}\left(\Sub{}(G_\alpha)/G_\alpha\right).
\end{equation*}
\end{Lem}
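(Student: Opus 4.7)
My plan is to prove injectivity and surjectivity of the canonical map $\Sub{}(G)/G\to\lim_\alpha(\Sub{}(G_\alpha)/G_\alpha)$ separately, each time exploiting that a cofiltered limit of nonempty compact Hausdorff spaces along continuous transition maps is nonempty. All the compactness will come from topologizing $\Sub{}(G_\alpha)$ as a profinite space in the natural way, via $\Sub{}(G_\alpha)\isoto\lim_{N\normaleop G_\alpha}\Sub{}(G_\alpha/N)$ (each $\Sub{}(G_\alpha/N)$ being finite since $G_\alpha/N$ is finite). In this topology the conjugation action of $G_\alpha$ on $\Sub{}(G_\alpha)$ is continuous, and the maps $\Sub{}(G_\alpha)\to\Sub{}(G_\beta)$ induced by the transition maps $G_\alpha\onto G_\beta$ are continuous as well.

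For injectivity, I would suppose $H,K\in\Sub{}(G)$ project to the same element on the right-hand side, and write $H_\alpha,K_\alpha\in\Sub{}(G_\alpha)$ for their images. The conjugator sets
\[
T_\alpha=\SET{g\in G_\alpha}{gH_\alpha g\inv=K_\alpha}
\]
are nonempty by hypothesis, and closed in the compact group $G_\alpha$ as preimages of the point $\{K_\alpha\}$ under the continuous map $G_\alpha\to\Sub{}(G_\alpha)$ sending $g\mapsto gH_\alpha g\inv$ (into a Hausdorff space). The transition $G_\alpha\onto G_\beta$ sends $T_\alpha$ into $T_\beta$, so $\lim_\alpha T_\alpha$ is a nonempty closed subset of $\lim_\alpha G_\alpha=G$, and any element of it conjugates $H$ into $K$ via the bijection~\eqref{eq:Sub-continuous}.

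For surjectivity, I would start with a compatible family $(c_\alpha)_\alpha$ of conjugacy classes and view each $c_\alpha$ as the $G_\alpha$-orbit of any chosen representative inside $\Sub{}(G_\alpha)$: this orbit is the continuous image of the compact group $G_\alpha$, hence a nonempty compact subspace, and the transition maps restrict to continuous maps $c_\alpha\to c_\beta$ by compatibility of the family. Therefore $\lim_\alpha c_\alpha$ is nonempty, and any element of this limit corresponds via~\eqref{eq:Sub-continuous} to a closed subgroup $H\le G$ whose image in each $\Sub{}(G_\alpha)/G_\alpha$ is $c_\alpha$, establishing the desired lift.

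The main obstacle is not conceptual but topological bookkeeping: one must confirm that the natural profinite topology on $\Sub{}(G_\alpha)$ really does make conjugation continuous (so $G_\alpha$-orbits are closed) and that the maps induced on closed subgroups by $G_\alpha\onto G_\beta$ remain continuous. Once this setup is in place, both halves reduce to the standard nonemptiness of cofiltered limits of nonempty compacta, and the case of interest in~\Cref{Prop:Spc-K(G)-colim}, where the $G_\alpha$ are finite, is even easier since all the spaces $\Sub{}(G_\alpha)$ are then finite and discrete.
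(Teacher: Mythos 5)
Your argument is correct and rests on the same underlying principle as the paper's proof: for injectivity, the nonempty conjugator sets form an inverse system whose limit is nonempty; for surjectivity, the orbits form an inverse system whose limit is nonempty. The difference is one of packaging. The paper first reduces (without elaboration) to the case where the $G_\alpha$ run over the finite quotients $G/N$, $N\normaleop G$, after which the sets $X_N$ and $Y_N$ it considers are \emph{finite} and nonempty, and one invokes the elementary fact that a cofiltered limit of nonempty finite sets is nonempty. You instead handle general profinite $G_\alpha$ directly by endowing $\Sub{}(G_\alpha)\cong\lim_{N\normaleop G_\alpha}\Sub{}(G_\alpha/N)$ with its profinite topology and invoking the corresponding fact for compact Hausdorff spaces. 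This buys you the freedom to skip the paper's reduction step (which is stated but not justified there), at the mild cost of verifying that conjugation $G_\alpha\times\Sub{}(G_\alpha)\to\Sub{}(G_\alpha)$ and the transition maps $\Sub{}(G_\alpha)\to\Sub{}(G_\beta)$ are continuous; both checks are routine since they hold levelwise on the finite quotients. One small remark: in your surjectivity step, the key point that makes $c_\alpha\to c_\beta$ well defined is that the transition maps $G_\alpha\onto G_\beta$ are \emph{surjective}, so the image of a $G_\alpha$-orbit is a single $G_\beta$-orbit; it is worth flagging this explicitly since that hypothesis is doing real work.
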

\begin{proof}
It is enough to show this for a fundamental system of open normal neighborhoods of~$1$, in other words, for $G=\lim_NG/N$.
Let us show injectivity first.
Let $C,D\in \Sub{}(G)$ such that $CN/N\sim_{G/N}DN/N$ for each~$N$.
Consider for every $N$ the subset~$X_N:=\SET{g}{C^gN=DN}$ of~$G/N$.
These define a sub-inverse system of~$\{G/N\}_N$.
Since each $X_N$ is finite and non-empty, the limit is non-empty.
Any $g\in \lim_NX_N\subseteq \lim_N(G/N)=G$ has the property that $C^g=D$.
It remains to show surjectivity.
So let $(H_N)_N\in\lim_N\left(\Sub{}(G/N)/G\right)$.
Consider for every $N$ the subset~$Y_N:=\SET{(H_N)^g}{g\in G}$ of~$\Sub{}(G/N)$.
These define a sub-inverse system of~$\{\Sub{}(G/N)\}_{N}$.
Since each $Y_N$ is finite and non-empty, the limit is non-empty.
Any $C\in\lim_NY_N\subseteq\lim_N\Sub{}(G/N)=\Sub{}(G)$, by~\eqref{eq:Sub-continuous}, has the property that $CN\sim_{G/N}H_N$ for each~$N$.
\end{proof}

\begin{Rem}
\label{Rem:Vee-Dirac}
For every profinite group~$G$, we write
\[
\Vee{G}:=\Spc(\Db(\kk G))
\]
for the cohomological support space, with the common abuse of notation of writing~$\Db(\kkG)$ for the bounded derived category of finite $\kk$-dimensional discrete~$(G;\kk)$-modules.
It is an easy consequence of the finite case that the cohomological support space underlies a Dirac scheme, namely the canonical comparison map at the top of the commutative square
\[
\xymatrix@R=2em@C=3em{
 \Vee{G}
 \ar[r]^-{\comp}_-{\simeq}
 \ar[d]_{\simeq}
 &
 \Spech(\Hm(G;\kk))
 \ar[d]_{\simeq}
 \\
 \lim_N\Vee{G/N}
 \ar[r]^-{\comp}_-{\simeq}
 &
 \lim_N\Spech(\Hm(G/N;\kk))
 }
\]
is a homeomorphism.
See~\cite[Proposition~6.5]{gallauer:tt-dtm-algclosed}.
\end{Rem}

\begin{Not}
Let $H\le G$ be a closed subgroup. Its Weyl group~$\Weyl{G}{H}$ is again a profinite group.
Assume further that $H$ is a pro-$p$-group.
The composite
\[
\check{\Psi}^H\colon \cK(G)\xto{\Psi^H}\cK(\Weyl{G}{H})\xto{\Upsilon}\Db(\kk(\Weyl{G}{H}))
\]
of modular $H$-fixed-points and the canonical functor (that is the identity on objects)
induces on spectra a continuous map
\begin{equation}
\label{eq:check-psi}
\check{\psi}^H:=\Spc(\check{\Psi}^H)\colon\Vee{\Weyl{G}{H}}\to\SpcKG.
\end{equation}
\end{Not}

\begin{Not}
Let $H\le G$ be a closed pro-$p$-subgroup and $\gp\in\Vee{\Weyl{G}{H}}$.
We write
\[
\cP_G(H,\gp):=\check{\psi}^H(\gp)
\]
for the corresponding point of~$\SpcKG$.
We just write $\cP(H;\gp)$ when $G$ is clear.
\end{Not}

\begin{Cor}
\label{Cor:SpcKG-decomposition}
The maps~\eqref{eq:check-psi} are injective and induce a decomposition as sets
\[
\SpcKG=\coprod_{(H)}\Vee{\Weyl{G}{H}}
\]
over conjugacy classes~$(H)$ of closed pro-$p$-subgroups~$H$ of~$G$.
\end{Cor}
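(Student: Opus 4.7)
The plan is to combine the finite-group decomposition from~\cite{balmer-gallauer:TTG-Perm} with the continuity homeomorphism of~\Cref{Prop:Spc-K(G)-colim} and the bijection of conjugacy classes from~\Cref{Lem:Sub-mod-G-continuous}.

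First I would invoke the finite-group analogue: for each $N\normaleop G$,
\[
\Spc(\cK(G/N))=\coprod_{(\bar{H})}\Vee{\Weyl{G/N}{\bar{H}}}
\]
is a disjoint decomposition indexed by $G/N$-conjugacy classes of $p$-subgroups~$\bar{H}\le G/N$, with each summand map $\check{\psi}^{\bar{H}}$ injective. By~\Cref{Rem:Psi-colim}, these decompositions are compatible with inflation as transition maps, so~\Cref{Prop:Spc-K(G)-colim} realises $\SpcKG$ as the inverse limit of the finite-level decompositions. A point on the left is thus a compatible system~$(\gp_N)_N$ with $\gp_N\in\Vee{\Weyl{G/N}{\bar{H}_N}}$ for a unique conjugacy class $(\bar{H}_N)$ at each level.

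Next, I would apply~\Cref{Lem:Sub-mod-G-continuous} to assemble the compatible family $((\bar{H}_N))_N$ into a unique $G$-conjugacy class of a closed subgroup~$H\le G$ with $HN/N\sim_{G/N}\bar{H}_N$ for each~$N$. Since every finite quotient of~$H$ is a $p$-group, $H$ is necessarily pro-$p$, and the construction is reversible, so closed pro-$p$-subgroups of~$G$ (up to conjugation) correspond bijectively to such compatible families. Applying~\Cref{Rem:Vee-Dirac} to $W:=\WGH$, and using the natural maps $W\to\Weyl{G/N}{HN/N}$ to identify $\Vee{W}$ with $\lim_N\Vee{\Weyl{G/N}{HN/N}}$, the compatible data $(\gp_N)_N$ assembles into a unique $\gp\in\Vee{\WGH}$. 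Injectivity of each~$\check{\psi}^H$ and disjointness of images for $(H)\ne(H')$ then follow from the corresponding finite-level statements: any collision in the limit would produce a collision at every sufficiently large~$N$.

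The main obstacle is the identification $\Vee{\WGH}\cong\lim_N\Vee{\Weyl{G/N}{HN/N}}$. The natural map $\WGH\to\Weyl{G/N}{HN/N}$ need not be surjective at every finite stage, since an element of~$G/N$ that normalises~$HN/N$ need not come from an element of~$G$ that normalises~$H$ itself. One must therefore verify, either by passing to a cofinal subsystem of open normal subgroups or by a direct cohomological argument via~\Cref{Rem:Vee-Dirac}, that the comparison of cohomological support spaces is nonetheless a homeomorphism, so that each compatible family $(\gp_N)_N$ determines a well-defined point of $\Vee{\WGH}$.
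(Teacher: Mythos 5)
Your proposal follows essentially the same route as the paper's proof: finite-level decomposition, continuity via \Cref{Prop:Spc-K(G)-colim}, assembling the subgroup $H$ up to conjugacy via \Cref{Lem:Sub-mod-G-continuous}, and assembling the prime $\gp$ via \Cref{Rem:Vee-Dirac}. The obstacle you correctly flag at the end (surjectivity of $\WGH\to\Weyl{G/N}{HN/N}$ at finite stages) is exactly the point the paper addresses in the closing parenthetical of its proof, where it invokes the identification $N_{G/N}(HN/N)=N_G(H)N/N$ together with the commutative square of \Cref{Rem:Psi-colim} to make the comparison of cohomological support spaces go through; so your sketch is essentially the paper's argument, with the one remaining verification flagged rather than carried out.
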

\begin{proof}
Let $x\in\SpcKG$.
By \Cref{Prop:Spc-K(G)-colim} and the finite case \cite[Proposition~7.32]{balmer-gallauer:TTG-Perm}, we may write $x=(\cP_{G/N}(H_N/N,\gp_N))_N$ as a compatible family of points in $\Spc(\cK(G/N))$, indexed by $N\normaleop G$, where \mbox{$N\le H_N\le G$} and $H_N/N$ is a $p$-group, and $\gp_N\in\Vee{\Weyl{(G/N)}{(H_N/N)}}$.
For all $N\le N'$ inflation sends $\cP_{G/N}(H_N/N,\gp_N)$ to~$\cP_{G/N'}(N'H_N/N',\bar{\pi}^{G/N'}\gp_N)$, see~\cite[Remark~7.6\,(c)]{balmer-gallauer:TTG-Perm}.
Hence $N'H_N\sim_G H_{N'}$, and we conclude from \Cref{Lem:Sub-mod-G-continuous} that there exists a closed subgroup~$H\le G$ with $NH\sim_G H_N$ for all~$N\normaleop G$.
Moreover, $H$ is a pro-$p$-group, and is unique up to conjugation.
It follows from \Cref{Rem:Vee-Dirac} that the~$\gp_N$ come from a unique point~$\gp$ in~$\Vee{\Weyl{G}{H}}$.
(One uses that the normalizer of~$HN/N$ in~$G/N$ is given by $N_G(H)N/N$ and the commutative square in \Cref{Rem:Psi-colim}.)
\end{proof}

\begin{Rem}
In other words, we have proved that all points in $\SpcKG$ are of the form~$\cP_G(H,\gp)$.
Moreover, $\cP_G(H,\gp)=\cP_G(H',\gp')$ if and only if there exists $g\in G$ such that $H'=H^g$ and $\gp'=\gp^g$, cf.~\cite[Theorem~7.16]{balmer-gallauer:TTG-Perm}.
\end{Rem}

\subsection{The map induced by restriction}
Let us now discuss the topological properties of the map~$\rho_H=\rho^G_H:=\Spc(\Res^G_H)$.
\begin{Prop}
\label{Prop:rho-closed}
Let $H\le G$ be a closed subgroup.
Then $\rho_H\colon\Spc(\cK(H))\to\SpcKG$ is a closed map with image
\[
\bigcap_{H\le_G K\leop G}\supp(\kk(G/K)).
\]
\end{Prop}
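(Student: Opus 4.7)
The plan is to reduce the statement to the finite-group case via the continuity homeomorphism~\eqref{eq:Spc-K(G)-colim}. The analogue of this proposition for finite groups---including both closedness and the same explicit image---is established in~\cite{balmer-gallauer:TTG-Perm}, so the main work is to pass these properties through the inverse limit.

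First I would settle the easy inclusion $\Img(\rho_H)\subseteq \bigcap_{H\le_G K\leop G}\supp(\kk(G/K))$ directly from the Mackey formula
\[
\Res^G_H(\kk(G/K))\cong\bigoplus_{HgK\in H\bs G/K}\kk\bigl(H/(H\cap gKg\inv)\bigr),
\]
a finite decomposition since $K\leop G$ forces $G/K$ to be finite. If $H\le_G K$, say $H^g\le K$, then the double coset of $g$ contributes the summand $\kk(H/H)=\unit_{\cK(H)}$, so $\Res^G_H(\kk(G/K))$ lies in no tt-prime of~$\cK(H)$. Equivalently $\kk(G/K)\notin \rho_H(Q)$ for every $Q\in\Spc(\cK(H))$, i.e.\ $\rho_H(Q)\in\supp(\kk(G/K))$.

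For the reverse inclusion I would exploit the inverse system of open normal subgroups $N\normaleop G$. By \Cref{Prop:K(G)-union}, and using that $(H\cap N)_N$ is cofinal among open normal subgroups of~$H$ (any open normal $M\normaleop H$ has the form $M=H\cap U$ for an open $U\leop G$, whose normal core in $G$ is again open), we have $\cK(G)=\colim_N \cK(G/N)$ and $\cK(H)=\colim_N\cK(H/(H\cap N))$. The obvious commutative squares comparing $\Res$ and $\Infl$ identify $\rho_H$, via \Cref{Prop:Spc-K(G)-colim}, with the inverse limit of the finite-level maps $\rho_N\colon \Spc(\cK(H/(H\cap N)))\to \Spc(\cK(G/N))$. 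Given $P\in\bigcap_K\supp(\kk(G/K))$, its projection $P_N\in\Spc(\cK(G/N))$ lies in the corresponding finite intersection: every open $\bar K\leop G/N$ containing a $G/N$-conjugate of $HN/N$ is the image of $K:=\pi_N\inv(\bar K)\leop G$, which satisfies $H\le_G K$ by \Cref{Lem:Sub-mod-G-continuous}, while inflation sends $\kk(G/K)$ to $\kk((G/N)/\bar K)$. The finite case then gives $P_N\in\Img(\rho_N)$, so the fibres $\rho_N\inv(P_N)$ form an inverse system of non-empty, closed---hence quasi-compact constructible---subsets of spectral spaces, and the Bourbaki--Mittag-Leffler principle yields a coherent lift $Q\in\Spc(\cK(H))$ with $\rho_H(Q)=P$.

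Closedness follows from the same limit argument applied to a closed $V=\lim_N V_N\subseteq\Spc(\cK(H))$: one writes $\rho_H(V)=\lim_N\rho_N(V_N)$, which is closed in $\SpcKG$ because each $\rho_N(V_N)$ is closed in the finite case. The main obstacle is precisely this commutation of images with inverse limits. Concretely, one must verify that the transition maps between the pro-constructible systems $(\rho_N(V_N))_N$ remain surjective enough for the Bourbaki--Mittag-Leffler principle to apply; the surjectivity in \Cref{Prop:Spc-K(G)-colim}, combined with the finite-case image description of each $\rho_N$, is what ultimately makes this technical step go through.
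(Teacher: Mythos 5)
Your plan follows the same overall strategy as the paper: reduce to the finite-group case via the continuity homeomorphism of \Cref{Prop:Spc-K(G)-colim} and then argue through the inverse limit. Your treatment of the image formula is sound — the Mackey decomposition gives the easy inclusion, and the inverse system of finite non-empty fibers gives the lift for the reverse inclusion — though it is more work than necessary: the paper's point-set lemma (\Cref{Lem:closed-limit}) delivers the image description $\bigcap_i\pi_i\inv(\Img(f_i))$ and closedness in a single stroke, and then one only needs to translate the finite-level image formula through inflation.

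Where your argument has a genuine gap is in the closedness step. You write $\rho_H(V)=\lim_N\rho_N(V_N)$ for a closed $V$, asserting that each $\rho_N(V_N)$ is closed ``by the finite case.'' But the finite-case closedness of $\rho_N$ only tells you that $\rho_N$ sends \emph{closed} subsets of $\Spc(\cK(H/H\cap N))$ to closed subsets, and $V_N=\pi_N^H(V)$ is merely the image of $V$ under a spectral projection: it is pro-constructible but need not be closed in the Zariski topology, since spectral maps are not closed maps in general. You sense that something is left to verify (``the main obstacle is precisely this commutation of images with inverse limits''), but the fix you sketch — checking Mittag--Leffler surjectivity on the systems $(\rho_N(V_N))_N$ — does not address the missing closedness of $V_N$. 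The paper sidesteps this entirely: \Cref{Lem:closed-limit} proves that the limit of closed spectral maps with finite fibers is closed by verifying the \emph{going-up property} (\cite[Theorem~5.3.3]{dst:spectral-spaces}). This is a pointwise specialization-lifting statement, and it passes cleanly through the inverse limit by the very same finite-non-empty-fibers argument you already used for surjectivity, without ever needing the images of closed sets under projections to be closed. Replacing your closedness paragraph with a going-up argument along these lines would close the gap.
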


Before proving the proposition, we recall a point-set topology fact.
\begin{Lem}
\label{Lem:closed-limit}
Let $(X_i), (Y_i)$ be inverse systems in the category~$\cSpec$ of spectral spaces and spectral maps. Let $X$ and~$Y$ be their respective limits\,\textup{(}\footnote{Recall that limits in~$\cSpec$ are computed in~$\Top$.}\textup{)}, and let $f_i\colon X_i\to Y_i$ be a natural transformation with limit $f\colon X\to Y$. Assume each $f_i$ is closed and has finite fibers.
Denote by~$\pi_i\colon Y\to Y_i$ the canonical projection. Then the map~$f$ is closed with image $\cap_i\pi_i\inv(\Img(f_i))$.
\end{Lem}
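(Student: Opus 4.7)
The plan is to derive both claims from a single image formula, proved by a classical inverse-limit-of-finite-sets argument. Write $\sigma_i\colon X\to X_i$ for the canonical projections, so that $\pi_if=f_i\sigma_i$, and $\phi_{ij}\colon X_j\to X_i$ for the transition maps of the source system.

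I would first show $\Img(f)=\bigcap_i\pi_i\inv(\Img(f_i))$. The inclusion ``$\subseteq$'' is immediate. For the reverse inclusion, given $y\in Y$ with each $y_i:=\pi_i(y)\in\Img(f_i)$, the fibers $f_i\inv(y_i)$ are nonempty and, by hypothesis, finite; and the transition maps $\phi_{ij}$ restrict to maps $f_j\inv(y_j)\to f_i\inv(y_i)$ because $f_i\phi_{ij}=\pi_{ij}f_j$. Thus we have a sub-inverse-system of nonempty finite sets, whose limit is nonempty, and any compatible family $(x_i)\in\lim f_i\inv(y_i)$ determines $x\in X$ with $f(x)=y$.

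For closedness, let $C\subseteq X$ be closed and set $C_i:=\overline{\sigma_i(C)}\subseteq X_i$. Continuity of the $\phi_{ij}$ yields $\phi_{ij}(C_j)\subseteq C_i$, so $(C_i)$ is a sub-inverse-system of $(X_i)$. The crucial identity is $\lim C_i = C$ as subsets of~$X$. The inclusion $C\subseteq\lim C_i$ is obvious. Conversely, suppose $\sigma_i(x)\in C_i$ for all~$i$. Every basic open neighborhood of $x$ in $X$ can be written as $\sigma_j\inv(U_j)$ for some single index~$j$ (by cofilteredness of the index category), and then $U_j$ is an open neighborhood of $\sigma_j(x)\in\overline{\sigma_j(C)}$, hence meets $\sigma_j(C)$; so $\sigma_j\inv(U_j)$ meets $C$. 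This forces $x\in\overline{C}=C$.

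Applying the image formula of the second paragraph to the restricted system $f_i|_{C_i}\colon C_i\to Y_i$ (which still has finite fibers), whose limit is $f|_{\lim C_i}=f|_C$, yields $f(C)=\bigcap_i\pi_i\inv(f_i(C_i))$. Since $f_i$ is a closed map and $C_i$ is closed, each $f_i(C_i)=f_i(\overline{\sigma_i(C)})$ is closed in~$Y_i$, so $f(C)$ is an intersection of closed preimages in~$Y$, hence closed. The image assertion of the lemma is then the special case $C=X$. The one genuinely subtle step is the identity $\lim C_i=C$, which depends crucially on $C$ being closed, on the basic-open description of the limit topology, and on reducing to single-index basic opens using cofilteredness of the indexing category.
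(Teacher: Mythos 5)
Your proof is correct, and it differs from the paper's in the closedness step. The image formula is proved identically (the nonempty-finite-fibers trick). For closedness, the paper invokes the characterization of closed spectral maps as those with the going-up property (DST, Thm.~5.3.3), reduces to a statement about specializations, and applies the same finite-fibers trick to the sets $D_i$ of specializations of $\pi_i(x)$ inside $f_i^{-1}(\pi_i(y))$; this needs the fact that specializations in the limit are detected levelwise. Your argument is more direct and avoids going-up entirely: you form the auxiliary inverse system $C_i=\overline{\sigma_i(C)}$, establish the identity $\lim_i C_i = C$ via the basic-open description of the cofiltered limit topology, and then feed the restricted system $f_i|_{C_i}$ back into the image formula (whose hypotheses are indeed preserved, since the restricted fibers $f_i^{-1}(y_i)\cap C_i$ are still finite). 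What you gain is elementarity and greater generality --- your argument does not actually use that the spaces are spectral, only that the index category is cofiltered and limits carry the initial topology; what the paper gains is brevity for a reader already comfortable with the spectral-spaces toolkit, and it sidesteps the discussion of limits and closures of subspaces. Both proofs are correct and of comparable length; yours is a genuine alternative.
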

\begin{proof}
Let $Z=\cap_i\pi_i\inv(\Img(f_i))\subseteq Y$.
Since $\pi_i(f(x))=f_i(\pi_i(x))$ it is clear that $\Img(f)\subseteq Z$.
For the converse, let $y\in Z$.
Consider, for each $i$, the subset $C_i:=\SET{x\in X_i}{f_i(x)=\pi_i(y)}$ of~$X_i$.
By assumption, the fiber $C_i=f_i\inv(\{\pi_i(y)\})$ is finite and it is non-empty because $y\in Z$.
It is also a sub-inverse system of~$X_\sbull$ hence its limit is non-empty.
Any $x\in\lim_i C_i\subseteq X$ maps to~$y$ under~$f$.

To show that~$f$ is a closed map it suffices to prove that it has the going-up property~\cite[Theorem~5.3.3]{dst:spectral-spaces}.
That is, let $x\in X$ and $f(x)\sto y\in Y$ a specialization.
We need to exhibit $x'\in X$ such that $x\sto x'$ and $f(x')=y$.
As the projection $\pi_i\colon Y\to Y_i$ is continuous, we also have $f_i(\pi_i(x))=\pi_i f(x)\sto \pi_i(y)$.
Consider, for each $i$, the subset $D_i:=\SET{x'_i\in X_i}{\pi_i(x)\sto x'_i, f_i(x'_i)=\pi_i(y)}$ of~$X_i$.
By assumption, $D_i$ is non-empty and finite since $D_i\subseteq f_i\inv(\{\pi_i(y)\})$.
The $(D_i)_i$ form a sub-inverse system of~$X_\sbull$ and their limit $D=\lim D_i$ is non-empty.
Any $x'\in D\subseteq X$ has the property that $f(x')=y$ and $x\sto x'$; the former is obvious and the latter follows from~\cite[Theorem~2.2.1 or~2.3.9]{dst:spectral-spaces}.
\end{proof}

\begin{proof}[Proof of \Cref{Prop:rho-closed}]
Let $I=\{N\normaleop G\}$ be the directed poset of open normal subgroups, so that $\Spc(\cK(G))=\lim_{N\normaleop G}\Spc(\cK(G/N))$ by \Cref{Prop:Spc-K(G)-colim}.
Since $H=\lim_{N\normaleop G}H/(N\cap H)$, we have $\Spc(\cK(H))=\lim_{N\normaleop G}\Spc(\cK(H/N\cap H))$ as well and it suffices to verify that the maps~$\rho^{G/N}_{H/H\cap N}\colon \Spc(\cK(H/N\cap H))\to \Spc(\cK(G/N))$ satisfy the hypotheses of the maps~$f_i$ in \Cref{Lem:closed-limit}, for every open normal subgroup~$N\normaleop G$. As $H/(N\cap H)$ is a subgroup of~$G/N$, we are in finite-group territory and can invoke~\cite[Proposition~4.7 and Corollary~4.9]{balmer-gallauer:TTG-Perm}.
\end{proof}

\begin{Rem}
One can in fact `pass the coequalizers of~\cite[Proposition~4.7]{balmer-gallauer:TTG-Perm} to the limit', in the hopefully obvious sense. As this will not be used here, we leave this development to the interested reader.
\end{Rem}

\begin{Exa}
\label{Exa:pronilpotent}%
Suppose that $G$ is a profinite group such that the inclusion of the pro-$p$-Sylow $P\into G$ has a retraction.
In other words, $G=H\rtimes P$ (for $H$ of order prime to~$p$). This happens in particular for pro-nilpotent groups, hence for abelian profinite groups.
See~\cite[Proposition~2.3.8]{ribes-zalesskii:profinite}.
In that case, the map on spectra induced by restriction to~$P$ admits a continuous retraction.
Since it is also surjective by \Cref{Rem:Spc-to-Sylow}, it is a homeomorphism
\[
\rho_{P}=\Spc(\Res^G_{P}):\Spc(\cK(P))\isoto\Spc(\cK(G)).
\]
\end{Exa}

\subsection{The map induced by modular fixed-points}
Let us now discuss the topological properties of the map~$\psi^H=\psi^{H\inn G}:=\Spc(\Psi^{H\inn G})$.

For a finite group~$G$ and any subgroup $K\le G$ we defined a Koszul object $\sKG=\tInd_K^G(0\to \kk\xto{1}\kk\to 0)$ using tensor-induction in~\cite[Construction~3.14]{balmer-gallauer:TTG-Perm}.
Note that this construction also makes perfect sense for $G$ pro-finite as long as $K\leop G$ is an open subgroup, that is, of finite index.
We then generalize~\cite[Lemma~7.12]{balmer-gallauer:TTG-Perm}:
\begin{Lem}
\label{Lem:P-to-s-pro}%
Let $H\le G$ be a pro-$p$-subgroup and $\gp\in \Vee{\WGH}$.
Let $K\leop G$ be an open subgroup and~$\sKG=\tInd_K^G(0\to \kk\xto{1}\kk\to 0)$.
Then $\sKG\in\cP_G(H,\gp)$ if and only if~$H\le_G K$.
\end{Lem}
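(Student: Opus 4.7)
My strategy is to reduce to the finite-group version \cite[Lemma~7.12]{balmer-gallauer:TTG-Perm} via the continuity isomorphism of \Cref{Prop:Spc-K(G)-colim}, and then invoke a profinite compactness argument (analogous to that in the proof of \Cref{Lem:Sub-mod-G-continuous}) to promote the resulting conjugate-containments modulo each open normal subgroup to a genuine conjugate-containment in~$G$.

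Combining \Cref{Prop:Spc-K(G)-colim}, \Cref{Cor:SpcKG-decomposition} and the identification in \Cref{Rem:Vee-Dirac}, I would first rewrite $\cP_G(H,\gp)$ as the compatible family $(\cP_{G/N}(HN/N,\gp_N))_{N\normaleop G}$, where $\gp_N$ is the image of $\gp$ in $\Vee{\Weyl{(G/N)}{(HN/N)}}$. Then $\sKG \in \cP_G(H,\gp)$ translates to: the image of $\sKG$ in $\cK(G/N)$ lies in $\cP_{G/N}(HN/N,\gp_N)$ for every (equivalently, for cofinally many) $N\normaleop G$. For $N \le K$ that image is $s_{K/N}(G/N)$, since tensor-induction commutes with inflation in the obvious way. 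By \cite[Lemma~7.12]{balmer-gallauer:TTG-Perm} applied to $G/N$, the condition is therefore equivalent to $HN/N \le_{G/N} K/N$, \ie the existence of some $g \in G$ with $(NH)^g = NH^g \le K$.

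It then remains to promote these $N$-wise witnesses to a single $g \in G$ with $H^g \le K$. The forward direction (a single $g$ works for all~$N\le K$) is immediate. For the converse, I would form the non-empty finite sets
\[
A_N := \SET{\bar g \in G/N}{(NH)^g \le K}, \qquad N\normaleop G,\ N\le K,
\]
(well-defined on $G/N$ since $N\le K$), and observe that they assemble into an inverse system: for $N' \le N$, the projection $G/N' \onto G/N$ carries $A_{N'}$ into $A_N$ because $N'H \le NH$. The limit $\lim_N A_N \subseteq \lim_N G/N = G$ is then non-empty by compactness and provides $g \in G$ with $NH^g \le K$ for every such $N$; since $H$ is closed, $H = \bigcap_{N\normaleop G,\,N\le K} NH$, and hence $H^g \le K$. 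The only substantive step is this final limit argument, and since it mirrors the one already established in the proof of \Cref{Lem:Sub-mod-G-continuous}, no genuinely new obstacle arises.
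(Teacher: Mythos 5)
Your proof reaches the correct conclusion and uses the same basic strategy as the paper (reduce to the finite case via the continuity isomorphism and invoke~\cite[Lemma~7.12]{balmer-gallauer:TTG-Perm}), but the compactness argument in your final paragraph is an unnecessary detour. The paper fixes a \emph{single} open normal subgroup $N\normaleop G$ with $N\le K$. Since $\sKG=\Infl(\kos[G/N]{K/N})$ and inflation pulls $\cP_G(H,\gp)$ back to $\cP_{G/N}(HN/N,\gq)$ (where $\gq$ is the image of $\gp$), the membership $\sKG\in\cP_G(H,\gp)$ is equivalent to the one finite-group condition $\kos[G/N]{K/N}\in\cP_{G/N}(HN/N,\gq)$, hence by~\cite[Lemma~7.12]{balmer-gallauer:TTG-Perm} to $HN/N\le_{G/N}K/N$.

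The observation you are missing is that for a single such $N$, already $HN/N\le_{G/N}K/N$ is equivalent to $H\le_G K$. Indeed, if $(HN)^g=H^gN\le K$ for some $g\in G$, then $H^g\le H^gN\le K$, using nothing but $N\le K$; the converse is trivial. So there is no need to run over all $N$, form the inverse system $A_N$, take a non-empty limit, and intersect the $NH^g$. That machinery is the content of \Cref{Lem:Sub-mod-G-continuous}, which is genuinely needed elsewhere (e.g.\ in the proof of \Cref{Cor:SpcKG-decomposition}), but here the situation is much simpler because the containment target $K$ already contains $N$. Your argument is not wrong, but it imports a nontrivial profinite tool where a one-line inclusion suffices.
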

\begin{proof}
Let $N\normaleop K$ be an open normal subgroup of~$G$.
By \cite[Lemma~3.17]{balmer-gallauer:TTG-Perm}, we have $\sKG=\Infl(\kos[G/N]{K/N})$. Consequently, $\sKG\in\cP_G(H,\gp)$ if and only if $\kos[G/N]{K/N}\in\cP_{G/N}(HN/N,\gq)$ where $\gq\in\Vee{\Weyl{G}{HN}}$ is the image of~$\gp$ under inflation.
We now apply \cite[Lemma~7.12]{balmer-gallauer:TTG-Perm}  which tells us that this is equivalent to $HN/N\le_G K/N$, or $H\le_G K$, as claimed.
\end{proof}

\begin{Cor}
If $\cP_G(H,\gp)\subseteq \cP_G(H',\gp')$ then $H'\le_G H$.
\qed
\end{Cor}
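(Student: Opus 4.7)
The plan is to combine \Cref{Lem:P-to-s-pro} with a standard compactness argument for profinite limits. First, the hypothesis $\cP_G(H,\gp)\subseteq\cP_G(H',\gp')$ says that every object of the smaller prime lies in the larger. Applying this to the Koszul objects~$\sKG$ for $K\leop G$ and invoking \Cref{Lem:P-to-s-pro}, one obtains the following combinatorial implication: for every open subgroup $K\leop G$, if $H\le_G K$ then $H'\le_G K$.

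Next, I would feed in the specific family of open subgroups $K=HN$ as $N$ ranges over open normal subgroups $N\normaleop G$. Since trivially $H\le HN$, this yields $H'\le_G HN$ for every such~$N$, that is, there exists some $g_N\in G$ with $H'^{g_N}\le HN$. The condition $H'^g\le HN$ depends only on~$g$ modulo~$N$ (as it can be read inside $G/N$), so it defines a non-empty finite subset
\[
S_N:=\SET{gN\in G/N}{H'^g\le HN}\subseteq G/N.
\]
Whenever $N\le N'$ are open normal in~$G$, the inclusion $HN\le HN'$ guarantees that the canonical projection $G/N\onto G/N'$ carries $S_N$ into~$S_{N'}$, so that $(S_N)_N$ becomes an inverse system of non-empty finite sets.

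The last step is to invoke compactness: a cofiltered limit of non-empty finite sets is non-empty, so one can choose an element of $\lim_N S_N$, which corresponds under the identification $G=\lim_N G/N$ to a single~$g\in G$ satisfying $H'^g\le HN$ for every $N\normaleop G$. Since $H$ is a closed subgroup of the profinite group~$G$ we have $H=\bigcap_{N\normaleop G}HN$, and therefore $H'^g\le H$, giving the desired relation $H'\le_G H$.

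The main obstacle is the coherence required in this last step: the elements~$g_N$ produced above are \apriori unrelated, and one must exploit the inverse-system structure of~$(S_N)_N$ to assemble them into a single $g\in G$. This is precisely where the proof uses that $G$ is genuinely the profinite limit of the finite quotients~$G/N$, rather than merely their abstract directed colimit.
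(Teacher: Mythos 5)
Your proof is correct and follows the route the paper intends: the Corollary is stated with a bare \qed precisely because it is meant to be read off from \Cref{Lem:P-to-s-pro}, and your write-up supplies the details that the paper leaves implicit. The key step you identify---passing from $H'\le_G HN$ for every $N\normaleop G$ to $H'\le_G H$ via the inverse system $(S_N)_N$ of nonempty finite subsets of the quotients $G/N$ and the identity $H=\bigcap_N HN$ for $H$ closed---is exactly the compactness argument needed, and is the same mechanism the paper uses elsewhere (e.g.\ in \Cref{Lem:Sub-mod-G-continuous} and \Cref{Lem:closed-limit}).
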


\begin{Lem}
\label{Lem:psi-N-closed}
Let $H\normale G$ be a normal pro-$p$-subgroup.
Then the map $\psi^H$ is a closed immersion $\Spc(\cK(G/H))\hook\SpcKG$ with image
\begin{equation}
\label{eq:psi-H-image}
\bigcap_{H\not\le_G K\leop G}\supp(\kos[G]{K}).
\end{equation}
\end{Lem}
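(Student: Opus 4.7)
The idea is to realize $\psi^H$ as the inverse limit of the analogous modular fixed-points maps for the finite quotients $G/N$, and then to reduce the statement to the finite-group case treated in~\cite{balmer-gallauer:TTG-Perm} by invoking \Cref{Lem:closed-limit}.

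Concretely, for each $N\normaleop G$, the commutative square in \Cref{Rem:Psi-colim} identifies the functor $\Psi^{H}$ restricted to the image of inflation from $G/N$ with (the inflation of) $\Psi^{HN/N}\colon\cK(G/N)\to\cK(G/HN)$, using normality of~$H$ to write $(G/N)/(HN/N)\cong G/HN$. Passing to spectra and applying \Cref{Prop:Spc-K(G)-colim} both to~$G$ and to $G/H=\lim_{N\normaleop G}G/HN$, the map~$\psi^H$ is realized as the inverse limit of the maps $\psi^{HN/N}\colon\Spc(\cK(G/HN))\to\Spc(\cK(G/N))$. Each such map is a closed immersion by the finite-group case of the statement, established in~\cite{balmer-gallauer:TTG-Perm}; in particular each is closed, injective, and has singleton (hence finite) fibres.

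Applying \Cref{Lem:closed-limit} to this inverse system of closed maps with finite fibres shows that $\psi^H$ itself is closed. Injectivity of~$\psi^H$ follows from injectivity of each $\psi^{HN/N}$ together with the limit identification of points used in \Cref{Cor:SpcKG-decomposition}. A continuous, injective, closed map is a closed embedding onto its image, so $\psi^H$ is indeed a closed immersion. For the image, \Cref{Lem:closed-limit} yields $\Img(\psi^H)=\bigcap_N\pi_N\inv(\Img(\psi^{HN/N}))$, where $\pi_N\colon\Spc(\cK(G))\to\Spc(\cK(G/N))$ is induced by inflation.

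It remains to identify the right-hand side with $\bigcap_{H\not\le_G K\leop G}\supp(\kos[G]{K})$. In each finite quotient, the image of $\psi^{HN/N}$ is the Thomason-closed subset $\bigcap_K\supp(\kos[G/N]{K/N})$, the intersection ranging over those open subgroups $N\le K\leop G$ with $HN/N\not\le_{G/N}K/N$; this is the finite-group prototype of the statement and is compatible with our \Cref{Lem:P-to-s-pro}. Since $\Infl(\kos[G/N]{K/N})\cong\kos[G]{K}$ by~\cite[Lemma~3.17]{balmer-gallauer:TTG-Perm}, and since $HN/N\not\le_{G/N}K/N$ is equivalent to $H\not\le_G K$ whenever $N\le K$, we conclude that $\pi_N\inv(\Img(\psi^{HN/N}))=\bigcap_K\supp(\kos[G]{K})$ with $K$ ranging over open subgroups containing~$N$ satisfying $H\not\le_G K$. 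Intersecting over the cofinal family $\{N\normaleop G\}$ recovers the whole poset of open subgroups, yielding the formula~\eqref{eq:psi-H-image}. The only genuine subtlety is the bookkeeping that ensures the Koszul objects match up under inflation and that no open subgroup~$K$ with $H\not\le_G K$ is missed in the cofinal limit; the topological core of the argument is entirely routine once the finite-group case and \Cref{Lem:closed-limit} are in hand.
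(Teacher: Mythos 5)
Your proposal is correct and follows the same overall strategy as the paper: realize $\psi^H$ as the inverse limit over $N\normaleop G$ of the closed immersions $\psi^{HN/N}\colon\Spc(\cK(G/HN))\to\Spc(\cK(G/N))$, apply \Cref{Lem:closed-limit}, and unwind the double intersection. The one genuine divergence is how you deduce the closed-immersion property. You argue that $\psi^H$ is closed (via \Cref{Lem:closed-limit}) and injective (since the limit of injective maps is injective, which is immediate and does not really need the detour through \Cref{Cor:SpcKG-decomposition} that you gesture at), and then invoke the standard fact that a continuous, injective, closed map is a closed embedding. The paper instead observes at the outset that inflation provides a continuous retraction of $\psi^H$ (by \cite[Corollary~5.16]{balmer-gallauer:TTG-Perm}), so $\psi^H$ is automatically a homeomorphism onto its image and it only remains to show the image is closed; this slightly shortens the argument by making the injectivity and the topology-onto-image come for free. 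Both routes are sound, and your image computation — including the translation $HN/N\not\le_{G/N}K/N\Leftrightarrow H\not\le_G K$ for $N\le K$ with $N$ normal, and the inflation identity $\Infl(\kos[G/N]{K/N})\cong\kos[G]{K}$ — matches the paper's. The reference to \Cref{Lem:P-to-s-pro} is not actually needed; the relevant input is the finite-group statement from~\cite{balmer-gallauer:TTG-Perm}, which you do cite.
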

\begin{proof}
This map admits a retraction induced by inflation (cf.~\cite[Corollary~5.16]{balmer-gallauer:TTG-Perm}).
Therefore it will be a closed immersion once we know the image is closed.
So it suffices to show the second statement.
For each $N\normaleop G$, the corresponding map~$\psi^{HN/N}\colon \Spc(\cK(HN/N))\to\Spc(\cK(G/N))$ is a closed immersion, by~\cite[Proposition~7.18]{balmer-gallauer:TTG-Perm}, with image
\begin{equation}
\label{eq:psi-HN-image}
\bigcap_{HN/N\not\le_G K/N}\supp(\kos[G/N]{K/N}).
\end{equation}
By \Cref{Rem:Psi-colim}, the map~$\psi^H$ in the statement is the limit of these~$\psi^{HN/N}$ along inflation and we may therefore apply \Cref{Lem:closed-limit}.
It tells us that the image of~$\psi^H$ is the intersection of all subsets~\eqref{eq:psi-HN-image} inflated to~$\cK(G)$, that is,
\[
\bigcap_{N\normaleop G}\ \bigcap_{HN/N\not\le_G K/N}\supp(\kos[G]{K}).
\]
For every $H\not\le_G K\leop G$ there exists an open normal subgroup $N\normaleop G$ such that $N\le K$ and $HN/N\not\le_G K/N$, by \Cref{Lem:Sub-mod-G-continuous}.
It follows that the subset described by this double intersection is precisely~\eqref{eq:psi-H-image}.
\end{proof}

\begin{Cor}
\label{Cor:psi-H-closed}
Let $H\le G$ be a pro-$p$-subgroup, not necessarily normal.
Then the map~$\psi^H\colon\Spc(\cK(\WGH))\to\SpcKG$ is a closed map.
\end{Cor}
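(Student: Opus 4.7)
The plan is to reduce the general pro-$p$-subgroup case to the normal case already handled in \Cref{Lem:psi-N-closed}, using the factorization of $\Psi^H$ through the normalizer. Since $H$ is a closed pro-$p$-subgroup of $G$, it is normal in its normalizer $N_G(H)$, and the Weyl group is $\WGH=N_G(H)/H$. Following the construction in \Cref{Cons:Psi^H-pro} (which mirrors the finite case), the modular $H$-fixed-points functor factors as
\[
\Psi^{H\inn G}\colon \cT(G)\xto{\Res^G_{N_G(H)}}\cT(N_G(H))\xto{\Psi^{H\inn N_G(H)}}\cT(\WGH),
\]
where in the second arrow $H$ is genuinely normal in $N_G(H)$.

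Applying $\Spc$ to this composite and using the contravariance of $\Spc$ on tt-functors, the map under consideration factors as
\[
\psi^H\colon \Spc(\cK(\WGH))\xto{\psi^{H\inn N_G(H)}}\Spc(\cK(N_G(H)))\xto{\rho^G_{N_G(H)}}\SpcKG.
\]
Now I would invoke \Cref{Lem:psi-N-closed} applied to the normal pro-$p$-subgroup $H\normale N_G(H)$: the first map $\psi^{H\inn N_G(H)}$ is a closed immersion, hence in particular a closed map. Then I would invoke \Cref{Prop:rho-closed} applied to the closed subgroup $N_G(H)\le G$: the second map $\rho^G_{N_G(H)}$ is closed.

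The composition of two closed maps is closed, which gives the result. The main (and really only) point to verify carefully is that the factorization of $\Psi^{H\inn G}$ through the normalizer indeed holds at the level of $\cT(G)$ for profinite~$G$; but this is exactly how~$\Psi^{H\inn G}$ was defined in \Cref{Cons:Psi^H-pro} by repeating the finite group construction of \cite[Definition~5.10]{balmer-gallauer:TTG-Perm}, and the compatibility with inflation recorded in \Cref{Rem:Psi-colim} ensures it is preserved in the passage to the limit.
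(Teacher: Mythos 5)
Your proof is correct and is essentially identical to the paper's: you factor $\psi^{H\inn G}=\rho^G_{N_G(H)}\circ\psi^{H\inn N_G(H)}$ and then invoke \Cref{Lem:psi-N-closed} for the normal case and \Cref{Prop:rho-closed} for restriction, exactly as the authors do.
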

\begin{proof}
Since $\psi^{H\inn G}=\rho^G_{N_G(H)}\circ \psi^{H\inn N_G(H)}$ the result follows from \Cref{Prop:rho-closed} for restriction to the normalizer and \Cref{Lem:psi-N-closed} for the case of $H\normale N_G(H)$.
\end{proof}

\begin{Rem}
\label{Rem:outline}%
In the finite group case, we combined the two techniques discussed above, namely restriction and modular fixed-points, to prove that $\Spc(\cK(G))$ is the colimit of the $\Spc(\cK(E))$ for the elementary abelian \emph{subquotients} of~$G$. More precisely, we introduced a category $\EApp{G}$ of sections $K\normale H\le G$ with $H/K$ elementary abelian and morphisms keeping track of conjugation, restriction \emph{and} quotients.
See details~\cite[Section~11]{balmer-gallauer:TTG-Perm}.

To avoid repeating ourselves unnecessarily, we have decided not to spell out the details in the profinite case.
In this generality, the category~$\EApp{G}$ of \emph{pro-elementary abelian $p$-sections} has objects the pairs $(H,K)$ where $K\le H\le G$ are pro-$p$-subgroups, with $H\le N_G(K)$, and such that $H/K$ is a pro-elementary abelian $p$-group.
Morphisms $(H,K)\to (H',K')$ are defined to be elements $g\in G$ such that~$K'\le K^g\le H^g\le H'$ and composition of morphisms is defined by multiplication in~$G$.
One can then prove that the canonical maps $\Spc(\cK(H/K))\to \Spc(\cK(G))$ assemble into a continuous bijection
\[
  \colim_{E\in\EApp{G}}\SpcKE\to\SpcKG.
\]
However, this map is in general not a homeomorphism, \ie it might not be closed.

This is a shortcoming of the profinite variant.
Another one is that, in the case of $E$ a \emph{finite} elementary abelian group, we produced in~\cite[Section~12]{balmer-gallauer:TTG-Perm} a multi-graded ring~$\Rall(E;\kk)$ and a comparison map realizing $\Spc(\cK(E))$ as an open subspace of the homogeneous spectrum of~$\Rall(E;\kk)$.
And furthermore the ring~$\Rall(E;\kk)$ is a finitely generated $\kk$-algebra.
In the non-finite case, we have an analogue for pro-elementary abelian groups but unfortunately the multi-graded ring becomes a monster, and even the grading involves a non-finitely generated abelian monoid.

In conclusion, the best description of~$\SpcKG$ in the profinite case might well be the formulation as a limit space, as presented in \Cref{Prop:Spc-K(G)-colim}.
\end{Rem}

\section{Spectrum in the procyclic case}
\label{sec:procyclic}%


%
\begin{Not}
\label{Not:Z_p}
We denote by $\Zp$ the $p$-adic integers viewed as a profinite group.
\end{Not}

\begin{Rem}
\label{Rem:procyclic-Zp}%
A procyclic group~$G$ is abelian and thus the inclusion of its $p$-primary part (its pro-$p$-Sylow) $G_p\into G$ yields a homeomorphism
\[
\rho_{G_p}=\Spc(\Res^G_{G_p}):\Spc(\cK(G_p))\isoto\Spc(\cK(G))
\]
by~\Cref{Exa:pronilpotent}.
Therefore our results on $\Spc(\cK(\Zp))$ in this section easily give similar results for arbitrary procyclic groups.
Cf.~\Cref{Cor:procyclic}.
\end{Rem}

\begin{Rec}
\label{Rec:W*}%
Let $n\ge 1$ and consider the cyclic group~$C_{p^n}$ of order~$p^n$.
We proved in \cite[Proposition~8.3]{balmer-gallauer:TTG-Perm} that the spectrum of $\cK(C_{p^n})$ is the space
\begin{equation}
\label{eq:W*}%
\W^n=\qquad
\vcenter{\xymatrix@R=1em@C=.7em{
{\scriptstyle\gm_0\kern-1em}
& {\bullet} \ar@{-}@[Gray] '[rd] '[rr] '[drrr]
&&
{\bullet}
&{\kern-1em\scriptstyle\gm_1}
&&{\scriptstyle\gm_{n-1}\kern-1em}&\bullet&&\bullet&{\kern-1em\scriptstyle\gm_n}
\\
&{\scriptstyle\gp_1\kern-1em}& {\bullet}&&& {\cdots}& \ar@{-}@[Gray] '[ru] '[rr] '[rrru] &&\bullet&{\kern-1em}\scriptstyle\gp_{n}}}
\end{equation}
consisting of $2n+1$ points, with specialization relations indicated above: Every $\gm_i$ is a closed point and the closure of each $\gp_i$ is~$\{\gm_{i-1},\gp_{i},\gm_i\}$.
A subset is closed if and only if it is specialization-closed.
\end{Rec}

\begin{Def}
\label{Def:W^infty}%
We denote by $\W^{\infty}=\colim_n\W^n$ the colimit along the canonical inclusions $\psi\colon \W^{n}\hook \W^{n+1}$,~$\gp_i\mapsto \gp_i$ and~$\gm_i\mapsto\gm_i$ of the spaces~$\W^n$ of \Cref{Rec:W*}.
\end{Def}

We are now ready to describe~$\Spc(\cK(\Zp))$.
\begin{Thm}
\label{Thm:Zp}%
The spectrum of~$\cK(\Zp)$ is the Alexandroff extension of~$\W^\infty=\cup_{n\ge 0}\W^n$, namely it is the space
$\bar\W^\infty=\SET{\gp_n}{0< n<\infty}\cup\SET{\gm_n}{0\le n\le \infty}=\W^\infty\cup\{\gm_\infty\}$ with specialization relations as follows
\begin{equation}
\label{eq:Spc(DAM-mod-p)}%
\vcenter{\xymatrix@R=1em@C=.7em{
{\scriptstyle\gm_0\kern-1em}
& {\bullet} \ar@{-}@[Gray] '[rd] '[rr] '[drrr]
&&
{\bullet}
&{\kern-1em\scriptstyle\gm_1}
&&{\scriptstyle\gm_{n-1}\kern-1em}&\bullet&&\bullet&{\kern-1em\scriptstyle\gm_n}&&&\bullet&{\kern-1em\scriptstyle\gm_{\infty}}
\\
&{\scriptstyle\gp_1\kern-1em}& {\bullet}&&& {\cdots}& \ar@{-}@[Gray] '[ru] '[rr] '[rrru] '[rrrr] &{\scriptstyle\gp_{n}}\kern-1em&\bullet &&&{\cdots}}}
\end{equation}
whose closed subsets are those $Z\subseteq\bar\W^\infty$ that are specialization-closed (if $\gp_n\in Z$ for $0<n<\infty$ then $\gm_{n-1},\gm_n\in Z$) and are finite or contain~$\gm_{\infty}$.
The supports of objects in~$\cK(\Zp)$ are exactly the following two classes of closed subsets:
\begin{enumerate}[wide, labelindent=0pt, label=\rm(\Roman*), ref=\rm(\Roman*)]
\smallbreak
\item
\label{it:I}%
The finite specialization-closed subsets that do not contain~$\gm_\infty$.
\smallbreak
\item
\label{it:II}%
The cofinite (finite complement) specialization-closed subsets that contain~$\gm_\infty$.
\end{enumerate}
\end{Thm}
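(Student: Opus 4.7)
The plan is to combine the inverse limit presentation~\eqref{eq:Spc-K(G)-colim} with the known finite-case description of~$\W^n$ in \Cref{Rec:W*}, and to make the transition maps $\pi_n^{n+1}\colon\W^{n+1}\to\W^n$ completely explicit on labels; the rest of the statement then falls out from a careful topological analysis of the resulting limit.

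For the transition maps I would combine two ingredients. First, $\pi_n^{n+1}$ is a retraction of the closed immersion $\psi\colon\W^n\hookrightarrow\W^{n+1}$ induced by modular $C_p$-fixed-points (see \Cref{Lem:psi-N-closed}; note that $\Psi^{C_p}\circ\Infl=\Id$ already at the level of permutation modules), so $\pi_n^{n+1}$ fixes every $\gp_i$ and $\gm_i$ with $i\le n$. Second, the formula used in the proof of \Cref{Cor:SpcKG-decomposition} sends $\cP_{C_{p^{n+1}}}(C_{p^j},\gp)$ to $\cP_{C_{p^n}}(C_{p^{\max(j-1,0)}},\bar{\pi}\gp)$, and for the two `new' points $\gp_{n+1},\gm_{n+1}$ (which have $H=1$) the residual map is the pullback $\bar{\pi}\colon\Vee{C_{p^{n+1}}}\to\Vee{C_{p^n}}$ induced by cohomological inflation along $C_{p^{n+1}}\onto C_{p^n}$. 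The standard calculation in cyclic $p$-group cohomology (inflation is an isomorphism on the exterior degree-$1$ generator but annihilates the polynomial degree-$2$ generator) shows that this pullback is the constant map to the closed point, so $\pi_n^{n+1}$ sends both $\gp_{n+1}$ and $\gm_{n+1}$ to~$\gm_n$. I expect this cohomological identification to be the main technical step, though it is essentially folklore.

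Knowing the transition maps, enumerating compatible sequences in $\lim_n\W^n$ shows that the only points are (a)~for each $i\ge 1$, the sequence eventually constant at $\gp_i$, (b)~for each $i\ge 0$, the sequence eventually constant at $\gm_i$, and (c)~the sequence $(\gm_n)_n$, which I denote $\gm_\infty$; this matches $\bar\W^\infty$ as a set. For the topology I would use that the limit topology is generated by the preimages of opens in each $\W^n$: the smallest open neighborhood of $\gm_n\in\W^n$ is $\{\gp_n,\gm_n\}$, and its preimage is the cofinite set $\{\gp_n,\gm_n,\gp_{n+1},\gm_{n+1},\ldots\}\cup\{\gm_\infty\}$, which furnishes the expected Alexandroff-type neighborhoods of $\gm_\infty$, while every other point has the expected neighborhoods already visible in some~$\W^n$. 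The specialization diagram and the description of the closed subsets follow.

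For the supports in cases~\ref{it:I} and~\ref{it:II}, \Cref{Prop:K(G)-union} shows that every $X\in\cK(\Zp)$ is inflated from some $X_0\in\cK(C_{p^n})$, whence $\supp(X)=\pi_n^{-1}(\supp_{\W^n}(X_0))$. Since $\W^n$ is noetherian and every specialization-closed subset is a support in the finite-group case, a direct case distinction on whether $\gm_n\in\supp_{\W^n}(X_0)$ shows that this preimage is finite and avoids $\gm_\infty$ in case~\ref{it:I} and cofinite containing $\gm_\infty$ in case~\ref{it:II}; conversely, any subset of the two stated types is realized this way for $n$ sufficiently large.
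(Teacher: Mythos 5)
Your proposal is correct and follows essentially the same route as the paper: present $\Spc(\cK(\Zp))$ as the inverse limit of the $\W^n$ via \Cref{Prop:Spc-K(G)-colim}, analyze the transition maps, enumerate coherent sequences, and read off the topology from supports of inflated objects. The one place you diverge slightly is the identification of the transition maps $\pi\colon\W^{n+1}\to\W^n$: the paper simply cites the finite-group computation in~\cite[Lemma~8.5]{balmer-gallauer:TTG-Perm}, whereas you re-derive it from the retraction $\Psi^{C_p}\circ\Infl=\Id$ together with the formula from the proof of \Cref{Cor:SpcKG-decomposition} and the standard fact that cohomological inflation $\Hm^*(C_{p^n};\kk)\to\Hm^*(C_{p^{n+1}};\kk)$ kills the polynomial generator (hence $\bar\pi$ is constant at the closed point of~$\Vee{C_{p^n}}$). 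That substitution is sound, is self-contained, and makes no structural difference; everything else --- the enumeration of points, the Alexandroff-type neighborhood of~$\gm_\infty$, and the support dichotomy~\ref{it:I}/\ref{it:II} keyed to whether the acyclicity point~$\gm_n$ lies in~$\supp(X_0)$ --- matches the paper's argument.
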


\begin{proof}
We have $\Spc(\cK(\Zp))=\lim_n\Spc(\cK(C_{p^n}))$, by \Cref{Prop:Spc-K(G)-colim}, where the transition maps are induced by inflation. The latter are described in~\cite[Lemma~8.5]{balmer-gallauer:TTG-Perm}. Here is a picture of this system of spaces
\[
\vcenter{\xymatrix@R=1em@C=.7em{
\vdots  \ar[d]_-{\pi}
&&&& \vdots&& \vdots &&&&&& \iddots
\\
\Spc(\cK(C_{p^3}))=\W^3 \ar[dd]_-{\pi} &=&
&& {\bullet} \ar@{-}@[Gray] '[rd] '[rr] '[rrrd] '[rrrr] '[rrrrrd] '[rrrrrr] &{\kern-1em\scriptstyle\gm_0}
& {\bullet} &{\kern-1em\scriptstyle\gm_1}
& {\bullet} &{\kern-1em\scriptstyle\gm_2}
& {\bullet} &{\kern-1em\scriptstyle\gm_3}
\\
&&&&& {\bullet} &{\kern-1em\scriptstyle\gp_1}
& {\bullet} &{\kern-1em\scriptstyle\gp_2}
& {\bullet} &{\kern-1em\scriptstyle\gp_3}
\\
\Spc(\cK(C_{p^2}))=\W^2 \ar[dd]_-{\pi} \ar@/_2em/[uu]_-{\psi} &=&
&& {\bullet} \ar@{-}@[Gray] '[rd] '[rr] '[rrrd] '[rrrr] &{\kern-1em\scriptstyle\gm_0}
& {\bullet} &{\kern-1em\scriptstyle\gm_1}
& {\bullet} &{\kern-1em\scriptstyle\gm_2}
\\
&&&&& {\bullet} &{\kern-1em\scriptstyle\gp_1}
& {\bullet} &{\kern-1em\scriptstyle\gp_2}
\\
\Spc(\cK(C_{p^1}))=\W^1 \ar[dd]_-{\pi} \ar@/_2em/[uu]_-{\psi} &=&
&& {\bullet} \ar@{-}@[Gray] '[rd] '[rr] &{\kern-1em\scriptstyle\gm_0}
& {\bullet} &{\kern-1em\scriptstyle\gm_1}
\\
&&&&& {\bullet} &{\kern-1em\scriptstyle\gp_1}
\\
\Spc(\cK(C_{p^0}))=\W^0 \ar@/_2em/[uu]_-{\psi} &=&
&& {\bullet} &{\kern-1em\scriptstyle\gm_0}
}}\kern-2em
\]
The transition maps~$\pi\colon \W^{n+1}\onto\W^n$ retract the `obvious' inclusions~$\psi\colon \W^n\hook\W^{n+1}$ and project everything else in~$\W^{n+1}$, that is, $\gp_{n+1}$ and~$\gm_{n+1}$, to the last point~$\gm_n$ in~$\W^n$. So each fiber $\pi\inv(x)$ above $x\in\W^n$ is a single point, unless~$x=\gm_n$ in which case $\pi\inv(x)$ consists of three points.
A point in the limit is given by a coherent sequence of points $(x_n)_{n\ge 0}\in\prod_{n\ge0}\W^n$.
For instance, there is the point $\gm_\infty:=(\gm_n)_{n\ge 0}$ which always picks the right-most point.
Any other point $(x_n)_{n\ge 0}$ must satisfy $x_n\neq \gm_n$ for some~$n$ and by the above comment, has all higher~$x_{m}$ forced to be $\psi^{m-n}(x_n)$. We name those points by their value for~$n$ large enough, yielding the~$\gp_i$ and~$\gm_i$ of the statement. For instance, $\gp_3$ means~$(\ldots,\gp_3,\gp_3,\gp_3,\gp_3,\gm_2,\gm_1,\gm_0)$. Hence $\Spc(\cK(\Zp))=\W^\infty\cup\{\gm_\infty\}$ as a set.

For the topology, every object of~$\cK(\Zp)$ is inflated from some~$c\in\cK(C_{p^n})$ and its support in~$\cK(\Zp)$ is the preimage of $\supp(c)\subseteq\W^n$ under~$\pi\colon \bar\W^\infty\onto\W^n$. If $c$ is acyclic (\ie, its homology vanishes), its support does not touch the right-most point~$\gm_n$ in~$\W^n$, hence its preimage in~$\bar\W^\infty$ will be the (finite) closed $\supp(c)\subseteq \W^n$ included via~$\W^n\hook\W^\infty\hook\bar\W^\infty$, away from~$\gm_{\infty}$. These yield the closed subsets of type~\ref{it:I}. On the other hand, if $c$ \emph{is} supported at $\gm_{n}$ then the preimage of its support in~$\bar\W^\infty$ contains~$\gm_{\infty}$ but also all~$\gp_i$ and~$\gm_i$ for~$i\ge n$. These yield the closed subsets of type~\ref{it:II}.
The rest is general tt-geometry~\cite[Remark~2.7]{balmer:spectrum}: The closed subsets are arbitrary intersections of supports of objects.
The description in the statement is then an exercise.
\end{proof}

We record some easy facts about the topology in $\bar\W^\infty$.
\begin{Rem}
\label{Rem:Zp-not-noeth}%
The spectrum~$\Spc(\cK(\Zp))$ is not noetherian, as the open complement of~$\{\gm_\infty\}$ is not quasi-compact.
\end{Rem}
\begin{Prop}
\label{Prop:V^infty-topology}%
In $\bar\W^\infty$ we have:
\begin{enumerate}[wide, labelindent=0pt, label=\rm(\alph*), ref=\rm(\alph*)]
\item Every $\gm_i$ for $0\le i\le\infty$ is closed.
\smallbreak
\item The closure of $\gp_j$ for $0<j<\infty$ is the set $\{\gm_{j-1},\gp_j,\gm_j\}$.
\smallbreak
\item A subset $V\subseteq\bar\W^\infty$ is quasi-compact if and only if $\gm_\infty\in V$ or $V$ is finite.
\smallbreak
\item A subset $V\subseteq\bar\W^\infty$ is Thomason\,(\,\footnote{\,Recall that a subset is Thomason if it can be written as a union of closed subsets each of whose complement is quasi-compact.}) if and only if (i) $V$ is stable under specializations and (ii) $\gm_\infty\in V$ implies $V$ cofinite.
\qed
\end{enumerate}
\end{Prop}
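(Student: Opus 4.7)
The plan is to derive all four parts directly from the characterization of closed subsets given in \Cref{Thm:Zp}: a set $Z\subseteq\bar\W^\infty$ is closed iff it is specialization-closed and either finite or contains $\gm_\infty$. Parts~(a) and~(b) then follow by immediate set-theoretic checks. For~(a), I would observe that each singleton $\{\gm_i\}$ for $0\le i<\infty$ is finite and contains no $\gp_n$, so is trivially specialization-closed; while $\{\gm_\infty\}$ is specialization-closed and contains $\gm_\infty$. For~(b), I would note that any closed set containing $\gp_j$ is forced to contain $\gm_{j-1}$ and $\gm_j$, while $\{\gm_{j-1},\gp_j,\gm_j\}$ is itself finite and specialization-closed, hence closed.

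For~(c), the easy direction splits into two cases. If $V$ is finite, quasi-compactness is immediate. If $\gm_\infty\in V$, then any open cover of $V$ must contain some member $U_{\alpha_0}\ni\gm_\infty$; its complement is closed and misses $\gm_\infty$, so by the characterization of closed sets it must be finite, and hence so is $V\setminus U_{\alpha_0}$, which is then covered by finitely many further~$U_\alpha$. For the converse direction I would build an explicit ``bad cover'': for $V$ infinite with $\gm_\infty\notin V$, consider the closed sets $Z_n:=\{\gm_\infty\}\cup\SET{\gm_k}{k\ge n}\cup\SET{\gp_k}{k>n}$ (easily checked to be specialization-closed and to contain $\gm_\infty$). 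Their complements $U_n=\{\gm_0,\ldots,\gm_{n-1},\gp_1,\ldots,\gp_n\}$ form an increasing open cover of $\W^\infty\supseteq V$; since each $U_n$ is finite but $V$ is infinite, no finite subcover can suffice.

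Finally,~(d) will combine~(c) with the definition of a Thomason subset. A Thomason set is a union of closed sets with quasi-compact complement, so is automatically specialization-closed. By~(c), a closed $Z$ has quasi-compact complement exactly when $\gm_\infty\notin Z$ or $Z$ is cofinite. If $V$ is Thomason and contains $\gm_\infty$, then $\gm_\infty$ lies in one of its constituent closed sets, which must then be cofinite, forcing $V$ cofinite. Conversely, a specialization-closed $V$ missing $\gm_\infty$ decomposes as $V=\bigcup_{x\in V}\overline{\{x\}}$, where each closure is finite and misses $\gm_\infty$ by~(a)--(b), hence has quasi-compact complement; while a specialization-closed cofinite $V$ containing $\gm_\infty$ is itself closed with finite complement, and so is trivially Thomason.

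The main---though quite mild---obstacle is identifying the explicit bad cover in the converse direction of~(c); once that is in hand, everything else is a direct unwinding of the definitions and of \Cref{Thm:Zp}.
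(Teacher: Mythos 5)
Your proof is correct and is exactly the kind of direct unwinding of the closed-set characterization from \Cref{Thm:Zp} that the paper intends; indeed, the paper states \Cref{Prop:V^infty-topology} without proof (note the \texttt{\textbackslash qed} at the end of the statement), leaving all four parts as a routine exercise. Your explicit cover $U_n=Z_n^c$ in part~(c) and the decomposition $V=\bigcup_{x\in V}\overline{\{x\}}$ in part~(d) are the natural choices and both check out against the specialization rules.
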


\begin{Rem}
\label{Rem:Spc-Zp-Thomason}%
Having a list of all Thomason subsets of $\bar\W^\infty\cong\Spc(\cK(\Zp))$, we also get a list of all tt-ideals in $\cK(\Zp)$, by~\cite[Theorem~4.10, Remark~4.3]{balmer:spectrum}.
In particular, there are continuum many distinct ones.
We determine generators for all tt-ideals in \Cref{Rem:Z_p-generators}.
\end{Rem}

\begin{Rem}
\label{Rem:Z_p-rsd}%
Let us briefly discuss `tt-residue fields', \`a la~\cite{balmer-krause-stevenson:ruminations}.
At each of the $\gm_i$ ($0\le i\le\infty$) the residue field is $\Db(k)$, while at each of the $\gp_j$ ($0< j<\infty$) the residue field is $\stab(\kk C_p)$.
For $i<\infty$, the residue field functor $\rsd{\gm_i}$ is given by the left-hand diagram below (cf.~\cite[Proposition~5.15]{balmer-gallauer:TTG-Perm}):
\[
\xymatrix@C=3em{
  \cK(\Zp)
  \ar@[Gray][rd]^-{\rsd{\gm_i}}
  \ar[r]^-{\check{\Psi}^{\ideal{p^i}}}
  \ar[d]_-{\Res^{\Zp}_{\ideal{p^i}}}
  &
  \Db(\kk C_{p^i})
  \ar[d]^-{\Res^{}_1}
  \\
  \cK(\ideal{p^i})
  \ar[r]_-{\check{\Psi}^{\ideal{p^i}}}
  &
  \Db(k).\!\!
}
\qquad\qquad
\xymatrix@C=3em{
  \cK(\Zp)
  \ar@[Gray][rd]^-{\rsd{\gm_\infty}}
  \ar[r]^-{\check{\Psi}^{1}}
  \ar[d]_-{\Res^{\Zp}_{1}}
  &
  \Db(\Zp)
  \ar[d]^-{\Res^{\Zp}_1}
  \\
  \Kb(k)
  \ar[r]_-{\check{\Psi}^{1}=\Id}
  &
  \Db(k).\!\!
}
\]
We write here~$\ideal{p^i}$ for the subgroup of~$\Zp$ of index~$p^i$.
This still makes sense for $i=\infty$ if one interprets $\ideal{p^\infty}$ as the trivial subgroup.
In other words, $\rsd{\gm_\infty}$ is restriction to the trivial subgroup as on the right-hand side above.
These residue field functors on each subcategory~$\cK(G/N)$ were denoted by $\bbF^{\ideal{p^i}}$ in \cite[Definition~7.26]{balmer-gallauer:TTG-Perm}.
Finally, for $0<j<\infty$, the residue field functor $\rsd{\gp_j}$ is given by
\[
\xymatrix@C=3em{
  \cK(\Zp)
  \ar@[Gray]@<-.1em>[rrd]_(.3){\rsd{\gp_j}}
  \ar[r]^-{\check{\Psi}^{\ideal{p^j}}}
  \ar[d]_-{\Res^{\Zp}_{\ideal{p^{j-1}}}}
  &
  \Db(\kk C_{p^j})
  \ar[d]^(.35){\Res^{}_{\ideal{p^{j-1}}}}
  \ar[r]^-{\sta}
  &
  \stab(\kk C_{p^j})
  \ar[d]^-{\Res^{}_{\ideal{p^{j-1}}}}
  \\
  \cK(\ideal{p^{j-1}})
  \ar[r]_-{\check{\Psi}^{\ideal{p^{j}}}}
  &
  \Db(\kk C_p)
  \ar[r]_-{\sta}
  &
  \stab(\kk C_p)
}
\]
where $\sta$ denotes the quotient by perfect complexes, \ie the passage from the derived to the stable module category.
\end{Rem}

\begin{Not}
\label{Not:V^infty-intervals}%
It is convenient to introduce notation for the `half-unbounded intervals' in the space $\bar\W^\infty$.
For any $0\le n\le \infty$:
\begin{itemize}
\item
We denote by $\bar\W^\infty_{\le n}$ the subspace $\{\gm_0,\gp_1,\ldots,\gp_{n},\gm_n\}$.
This can also be identified with $\W^n$ as long as $n<\infty$.
\item
We denote by $\bar\W^\infty_{\ge n}$ the subspace $\{\gm_n,\gp_{n+1},\ldots,\gm_\infty\}$.
\end{itemize}
\end{Not}
\begin{Lem}
\label{Lem:Z_p-generators}%
With notation as above, we have:
\begin{enumerate}[label=\rm(\alph*), ref=\rm(\alph*)]
\item $\supp(\kk(\Zp/\ideal{p^n}))=\bar\W^\infty_{\ge n}$.
\smallbreak
\item $\supp(\kos[\Zp]{\ideal{p^n}})=\bar\W^{\infty}_{\le n-1}$.
\smallbreak
\item $\gm_i=\ideal{\kk(\Zp/\ideal{p^{i+1}}),\kos[\Zp]{\ideal{p^i}}}$ for all $0\le i<\infty$.
\smallbreak
\item $\gm_\infty=\ideal{\kos[\Zp]{\ideal{p^i}}\mid i<\infty}$.
\smallbreak
\item $\gp_j=\ideal{\kk(\Zp/\ideal{p^j}),\kos[\Zp]{\ideal{p^j}}}$ for all $1\le j<\infty$.
\end{enumerate}
\end{Lem}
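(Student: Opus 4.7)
The plan is to treat parts (a), (b) separately with direct computations, and then deduce (c)--(e) by combining them with Balmer's classification and the topology of $\bar\W^\infty$.

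For (a), I would use that $\kk(\Zp/\ideal{p^n})$ is inflated from the regular representation $\kk C_{p^n}$. By \Cref{Prop:Spc-K(G)-colim} its support in $\cK(\Zp)$ equals the preimage $\pi_n\inv(\supp_{\cK(C_{p^n})}(\kk C_{p^n}))$ under the projection $\pi_n\colon\bar\W^\infty\onto\W^n$. A short modular-fixed-points computation gives $\Psi^{C_{p^m}}(\kk C_{p^n})=0$ for every $m>0$ (left translation acts freely on $C_{p^n}$) while $\Psi^1=\Id$; combined with the residue-field description recalled in \Cref{Rem:Z_p-rsd} this forces $\supp_{\cK(C_{p^n})}(\kk C_{p^n})=\{\gm_n\}$, and the explicit form of $\pi_n$ from the proof of \Cref{Thm:Zp} identifies the preimage with $\bar\W^\infty_{\ge n}$. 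Part (b) is then immediate from \Cref{Lem:P-to-s-pro}: since $\Zp$ is abelian the conjugation is trivial, so $\kos[\Zp]{\ideal{p^n}}\in\cP_{\Zp}(\ideal{p^m},\gp)$ if and only if $\ideal{p^m}\le\ideal{p^n}$, i.e.\ $m\ge n$; the support is therefore the enumeration of the primes with $m<n$, which is exactly $\bar\W^\infty_{\le n-1}$.

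For (c)--(e), my strategy is to compute the support of each right-hand tt-ideal as the union of supports of its generators (using (a), (b)) and match the result against the Thomason subset of the corresponding prime. In (c) the union is $\bar\W^\infty_{\ge i+1}\cup\bar\W^\infty_{\le i-1}=\bar\W^\infty\sminus\{\gp_i,\gm_i,\gp_{i+1}\}$, with the edge case $i=0$ yielding $\bar\W^\infty\sminus\{\gm_0,\gp_1\}$ since $\kos[\Zp]{\ideal{p^0}}=0$; parts (d), (e) are analogous, producing $\bar\W^\infty\sminus\{\gm_\infty\}$ and $\bar\W^\infty\sminus\{\gp_j\}$ respectively. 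On the other hand, reading off generalizations from \Cref{Prop:V^infty-topology}, the only primes whose closure contains $\gm_i$ (for $0<i<\infty$) are $\gm_i,\gp_i,\gp_{i+1}$, so $\supp(\gm_i)=\bar\W^\infty\sminus\{\gm_i,\gp_i,\gp_{i+1}\}$; analogously, $\gm_\infty$ has no proper generalization and each $\gp_j$ is already generic, so $\supp(\gm_\infty)=\bar\W^\infty\sminus\{\gm_\infty\}$ and $\supp(\gp_j)=\bar\W^\infty\sminus\{\gp_j\}$. The supports thus agree on the nose, and Balmer's classification (\cite[Theorem~4.10]{balmer:spectrum}) produces the desired equalities.

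The main obstacle I expect is keeping Balmer's specialization convention straight: $\cQ\in\overline{\{\cP\}}$ translates to $\cQ\subseteq\cP$, so the generic points $\gp_j$ correspond to tt-ideal-maximal primes while $\gm_i,\gm_\infty$ are tt-ideal-minimal. It is precisely this inversion that makes $\supp(\gm_i)$ pick up the extra generalizations $\gp_i,\gp_{i+1}$ beyond $\gm_i$ itself, and correspondingly makes the union of supports in (c) come out missing those same three points rather than just $\{\gm_i\}$. Once this bookkeeping is done correctly, each of (c)--(e) reduces to the mechanical computation outlined above.
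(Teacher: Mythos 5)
Your proposal is correct and essentially matches the paper's own (very terse) argument, which derives (a) and (b) from the residue field functors of \Cref{Rem:Z_p-rsd} (or from the finite-group references) and then obtains (c)--(e) by "inspecting the supports." Your mild variations — computing (a) by inflating from $\cK(C_{p^n})$ and taking the preimage of $\{\gm_n\}$ under $\pi_n$, and deriving (b) directly from \Cref{Lem:P-to-s-pro} — are equivalent in substance, and the support bookkeeping and specialization-convention caveat you highlight are exactly the content of the paper's "inspecting the supports."
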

\begin{proof}
The first two parts are easy consequences of the residue field functors exhibited in \Cref{Rem:Z_p-rsd}.
Or one may simply refer to \cite[Propositions~4.7 and Corollary~7.17]{balmer-gallauer:TTG-Perm}.
The remaining parts then follow by inspecting the supports.
\end{proof}

\begin{Rem}
\label{Rem:Z_p-generators}%
Returning to \Cref{Rem:Spc-Zp-Thomason}, we deduce from \Cref{Lem:Z_p-generators} that each tt-ideal in $\cK(\Zp)$ is generated by a (possibly infinite) family of objects that are either permutation modules~$\kk(\Zp/\ideal{p^n})$ or Koszul complexes~$\kos[\Zp]{\ideal{p^m}}$ or tensor products of such.
To be more explicit, each closed subset of $\bar\W^\infty$ is of the form
\begin{enumerate}[label=\rm(\alph*), ref=\rm(\alph*)]
\item
\label{it:barV^infty-closed-1}%
$\bigcap_{i}(\bar\W^\infty_{\le m_i}\cup \bar\W^\infty_{\ge n_i})$, for $0\le m_i<n_i<\infty$, or
\smallbreak
\item
\label{it:barV^infty-closed-2}%
$Z'\cap \bar\W^\infty_{\le m}$, for $m\ge 0$, and for $Z'$ as in~\ref{it:barV^infty-closed-1}, or
\smallbreak
\item
\label{it:barV^infty-closed-3}%
$Z'\cap \bar\W^\infty_{\ge n}$, for $n\ge 0$, and for $Z'$ as in~\ref{it:barV^infty-closed-1}.
\end{enumerate}
\end{Rem}

\begin{Rem}
\label{Rem:integral-generators}
Each of the tt-ideals in~$\cK(\Zp)=\cK(\Zp;\kk)$ is generated by the images of objects in~$\cK(\Zp;\ZZ)$.
Indeed, the permutation modules $\kk(\Zp/\ideal{p^{i+1}})$ obviously have integral lifts.
The Koszul objects~$\kos[\Zp]{\ideal{p^i}}$ also admit an integral lift if $p$ is odd, see~\cite[Lemma~3.8]{balmer-gallauer:resol-small}.
However, this is not true if~$p=2$.
Still, we know from~\cite[Theorem~3.1]{balmer-gallauer:resol-small} that there is an acyclic complex~$D\in\cK(C_{2^i};\ZZ)$ concentrated in non-negative degrees, with~$D_0=\ZZ$ and $D_1$ a free $\ZZ C_{2^i}$-module.
It then follows from~\cite[Corollary~3.20]{balmer-gallauer:TTG-Perm} that~$D$ generates the ideal of acyclics.
The same remains true for its image in~$\cK(C_{2^i};\kk)$.
Inflating these complexes to~$\cK(\Zp;\ZZ)$ yields the claim.\,(\footnote{\,This argument should have been spelled out in the proof of~\cite[Theorem~11.3]{balmer-gallauer:rage}.})
\end{Rem}

\section{About stratification}
\label{sec:stratification}%


We isolate a couple of abstract tt-geometric results related to stratification.
Both results are close to folklore but we did not find a convenient reference.
We recall that `big' tt-categories mean rigidly-compactly generated ones and a `geometric' tt-functor means one that preserves coproducts (and compact objects since they are assumed to agree with rigid objects).
\begin{Lem}
\label{Lem:fully-faithful-localization}%
Let $F\colon\cT\to\cS$ be a geometric tt-functor.
Let $V\subseteq\Spc(\cT^c)$ be a Thomason subset with preimage $W:=\Spc(F)\inv(V)$ in~$\Spc(\cS^c)$ and let
\[
\bar F\colon\frac{\cT}{\Loc{\cT^c_V}}\to\frac{\cS}{\Loc{\cS^c_W}}.
\]
be the functor induced by~$F$. If $F$ is fully faithful then so is $\bar F$.
\end{Lem}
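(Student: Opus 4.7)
The plan is to establish the stronger statement that $F$ commutes with the two finite localizations, in the sense that for every $X \in \cT$ the canonical map
\[
F(e_V X) \isoto e_W(F X)
\]
is an isomorphism in $\cS$, where $e_V$ (resp.\ $e_W$) denotes the smashing localization on $\cT$ (resp.\ $\cS$) away from $\Loc{\cT^c_V}$ (resp.\ $\Loc{\cS^c_W}$), fitting into a triangle $a_V X \to X \to e_V X$ with $a_V X \in \Loc{\cT^c_V}$. Granting this, full faithfulness of $\bar F$ is immediate: using the standard identification $\Hom_{\cT/\Loc{\cT^c_V}}(\bar X, \bar Y) = \Hom_\cT(X, e_V Y)$ and its analogue for~$\cS$, one gets
\[
\Hom_{\cS/\Loc{\cS^c_W}}(\bar F\bar X, \bar F\bar Y) = \Hom_\cS(F X, e_W F Y) \simeq \Hom_\cS(F X, F e_V Y) \simeq \Hom_\cT(X, e_V Y) = \Hom_{\cT/\Loc{\cT^c_V}}(\bar X, \bar Y),
\]
the middle isomorphism coming from the main claim and the next from $F$ being fully faithful on~$\cT$.

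Being geometric, $F$ has a coproduct-preserving right adjoint~$U$ (Brown representability), and the fully faithful hypothesis means that the unit $\Id \to UF$ is an isomorphism. Apply $F$ to the triangle $a_V X \to X \to e_V X$. Since $\supp(F c) = \Spc(F)^{-1}(\supp c)$ for every compact~$c$, we have $F(\cT^c_V) \subseteq \cS^c_W$, and because $F$ preserves coproducts this extends to $F\bigl(\Loc{\cT^c_V}\bigr) \subseteq \Loc{\cS^c_W}$; in particular $F a_V X \in \Loc{\cS^c_W}$. Hence $F a_V X \to F X \to F e_V X$ will be the $W$-localization triangle of~$FX$ as soon as $Fe_V X$ is shown to be $W$-local.

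The $W$-locality of $Fe_V X$ will follow, via the adjunction identity $\Hom_\cS(z, Fe_V X) = \Hom_\cT(Uz, e_V X)$ together with the $V$-locality of $e_V X$, from the complementary containment
\[
U\bigl(\Loc{\cS^c_W}\bigr) \subseteq \Loc{\cT^c_V}.
\]
To prove this, choose compacts $c_i \in \cT^c$ with $V = \bigcup_i \supp(c_i)$; then $\Loc{\cS^c_W}$ is the localizing tensor-ideal of~$\cS$ generated by the~$Fc_i$. Since each $Fc_i$ is rigid (compacts and rigids coincide in the ambient rigidly-compactly generated tt-categories) and $F$ commutes with duals, the projection formula yields $U(Fc_i \otimes s) \simeq c_i \otimes Us$ for every $s \in \cS$, and this object lies in $\Loc{\cT^c_V}$ because the latter is a tensor-ideal containing $c_i$. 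The subcategory $\{s \in \cS \mid Us \in \Loc{\cT^c_V}\}$ is localizing (as $U$ preserves coproducts and triangles), contains every $Fc_i \otimes s$, and therefore contains the entire localizing tensor-ideal $\Loc{\cS^c_W}$, as wanted.

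The main obstacle is precisely this projection-formula step, which hinges on the rigidity of the $Fc_i$ and on the standard description of $\Loc{\cS^c_W}$ as the localizing tensor-ideal with the explicit compact generators~$Fc_i$. Everything else amounts to manipulating the localization triangles and applying the support formula for geometric tt-functors.
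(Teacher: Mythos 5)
Your overall strategy and the crucial ingredient are the same as the paper's: you correctly identify that the heart of the matter is the containment $U\bigl(\Loc{\cS^c_W}\bigr)\subseteq\Loc{\cT^c_V}$, and you prove it exactly as the paper does, by describing $\cS^c_W$ via the (rigid) generators $Fc_i$, applying the projection formula $U(Fc_i\otimes s)\cong c_i\otimes Us$, and invoking that $\Loc{\cT^c_V}$ is a tensor-ideal. However, the step where you deduce $W$-locality of $Fe_VX$ is flawed. You invoke an ``adjunction identity'' $\Hom_\cS(z, Fe_VX)=\Hom_\cT(Uz,e_VX)$. For $F\adj U$ the identity is $\Hom_\cS(Fx,s)\cong\Hom_\cT(x,Us)$, with $F$ on the source side; what you wrote would require $U$ to be a \emph{left} adjoint of $F$, which it is not (a geometric functor has a right adjoint by Brown representability, but generally no left adjoint). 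As written, that step is invalid.

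The conclusion you are after, $Fe_VX\cong e_W(FX)$, is true and can be reached with the ingredients you already have. One way: for $z=Fc_i\otimes d$ with $d\in\cS^c$, rigidity and the fact that $F$ is a tt-functor give $\Hom_\cS(Fc_i\otimes d,\,Fe_VX)\cong\Hom_\cS\bigl(d,\,F(c_i^\vee\otimes e_VX)\bigr)$, and this vanishes since $c_i^\vee\in\cT^c_V$ kills the $V$-local object $e_VX$; then pass from these generators to all of $\Loc{\cS^c_W}$ by a localizing-subcategory argument. Alternatively, it follows from the cited result of Balmer--Favi that geometric tt-functors preserve the Rickard idempotents, so $F(e_VX)=F(f_V\otimes X)\cong f_W\otimes FX=e_W(FX)$. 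The paper sidesteps the issue entirely: having both containments, $F$ and $U$ descend to an adjoint pair $\bar F\adj\bar U$ on the quotients, and the unit of $\bar F\adj\bar U$ is the image of the unit of $F\adj U$, hence invertible. You should repair the $W$-locality step by one of these routes.
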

\begin{proof}
Recall that $\cT^c_V$ and $\cS^c_W$ are the tt-ideals of~$\cT^c$ and~$\cS^c$ supported on~$V$ and~$W$, respectively (and $W$ is Thomason because $\Spc(F)$ is spectral).
By \cite[Theorem~6.3]{balmer-favi:idempotents}, we have the following equality of tt-ideals in~$\cS^c$:
\begin{equation}
\label{eq:aux-ff}%
\cS^c_W=\ideal{\SET{F(c)}{c\in\cT^c_V}}.
\end{equation}
Since $F$ is coproduct-preserving, it maps $\Loc{\cT^c_V}$ into $\Loc{\cS^c_W}$, hence the functor~$\bar{F}$.
Let now $U\colon\cS\to\cT$ be the right adjoint to~$F$.
We claim that $U$ sends $\Loc{\cS^c_W}$ into $\Loc{\cT^c_V}$.
As $F$ preserves compacts, $U$ is coproduct-preserving and it suffices to show $U(\cS^c_W)\subseteq\Loc{\cT^c_V}$.
By~\eqref{eq:aux-ff} the tt-ideal $\cS^c_W$ is generated as a thick subcategory of~$\cS^c$ by objects of the form $F(c)\otimes d$, where $c\in\cT^c_V$ and $d\in\cS^c$.
By the projection formula, $U$ sends such objects to
\[
U(F(c)\otimes d)\cong c\otimes U(d)\in\Loctens{\cT^c_V}=\Loc{\cT^c_V}
\]
which proves the claim.
Consequently, the adjoint $U$ also descends to a functor
\[
\bar U\colon\frac{\cS}{\Loc{\cS^c_W}}\to\frac{\cT}{\Loc{\cT^c_V}}
\]
which is automatically right adjoint to~$\bar F$.
The unit of this adjunction $\bar{F}\adj\bar{U}$ is given by the unit of the original adjunction $F\dashv U$ viewed in the localization.
As the latter is invertible, by assumption, so is the former.
\end{proof}

\begin{Rec}
\label{Rec:BHS-stratification}%
When $\cT$ is a big tt-category whose spectrum $\Spc(\cT^c)$ is weakly noetherian, we have idempotents~$g(\cP)\in\cT$ for every $\cP\in\Spc(\cT^c)$ and a support theory for big objects given by $\Supp(t)=\SET{\cP}{g(\cP)\otimes t\neq 0}\subseteq\Spc(\cT^c)$. See~\cite[Definition~7.16]{balmer-favi:idempotents}.
We say that $\cT$ is \emph{BHS-stratified} if $\Spc(\cT^c)$ is weakly noetherian and if the localizing $\otimes$-ideals $\cL$ of~$\cT$ are in bijection with the subsets $X\subseteq\Spc(\cT^c)$ of the spectrum, via $\cL\mapsto \Supp(\cL)=\cup_{t\in \cL}\Supp(t)$ and $X\mapsto \SET{t\in\cT}{\Supp(t)\subseteq X}$.
See~\cite[Definition~4.4]{barthel-heard-sanders:stratification-Mackey}.
\end{Rec}

The question of descending stratification along a `sufficiently conservative' tt-functor has been discussed in~\cite{barthel-heard-sanders:stratification-Mackey,barthel-et-al:descent} for instance. These methods yield the following result, which is similar to~\cite[Theorem~17.20]{barthel-et-al:cosupport}.
\begin{Prop}
\label{Prop:descent-stratification}%
Let $F\colon\cT\to\cS$ be a geometric functor with right adjoint \mbox{$U\colon\cS\to\cT$} satisfying the following conditions:
\begin{enumerate}[label=\rm(\arabic*), ref=\rm(\arabic*)]
\item
The map $\varphi:=\Spc(F)\colon \Spc(\cS^c)\to \Spc(\cT^c)$ is closed and injective.
\item
The tt-category~$\cS$ is BHS-stratified.
\item
\label{it:sf-3}%
The unit $\unit$ belongs to $\Loctens{U(\unit)}$ in~$\cT$.
\end{enumerate}
Then $\cT$ is BHS-stratified.
\end{Prop}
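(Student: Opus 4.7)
The plan is to use~\ref{it:sf-3} to force conservativity of~$F$, upgrade~$\varphi$ to a homeomorphism of spectra, and then descend stratification from~$\cS$ to~$\cT$ via a projection-formula argument.

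First, \ref{it:sf-3} makes $F$ conservative on all of~$\cT$: if $F(t)=0$, then the projection formula yields $U(\unit)\otimes t\cong U(F(t))=0$, so $t$ is annihilated by $\Loctens{U(\unit)}$, which contains $\unit$; hence $t=0$. A fortiori $F$ is conservative on compacts, and $\varphi$ is surjective by~\cite[Theorem~1.2]{balmer:surjectivity}. Combined with~(1), $\varphi$ is a closed continuous bijection between spectral spaces, hence a homeomorphism. In particular $\Spc(\cT^c)$ is weakly noetherian, so the Balmer-Favi idempotents $g_{\cT}(\cP)$ exist. Since these are built functorially from idempotent triangles attached to Thomason subsets of the spectrum, and $F$ is geometric, one has $F(g_{\cT}(\varphi(\gq)))\cong g_{\cS}(\gq)$ for every $\gq\in\Spc(\cS^c)$; combining this with conservativity of $F$ yields the crucial support formula
\[
\Supp_{\cT}(t)=\varphi\bigl(\Supp_{\cS}(F(t))\bigr)\qquad\text{for every }t\in\cT.
\]

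Fix now a localizing tensor-ideal $\cL\subseteq\cT$ and set $X:=\Supp_{\cT}(\cL)$; we must show $\cL=\{t\mid\Supp_{\cT}(t)\subseteq X\}$. The inclusion $\subseteq$ is automatic, so assume $\Supp_{\cT}(t)\subseteq X$. Writing $\tilde\cL:=\Loctens{F(\cL)}\subseteq\cS$, the support formula and stratification of $\cS$ give $\Supp_{\cS}(\tilde\cL)=\varphi^{-1}(X)\supseteq\Supp_{\cS}(F(t))$, so~(2) yields $F(t)\in\tilde\cL$. Since $U$ is triangulated and coproduct-preserving, and since $U(F(l)\otimes s)\cong l\otimes U(s)\in\cL$ by the projection formula, we obtain $U(\tilde\cL)\subseteq\cL$, and in particular $U(F(t))\in\cL$. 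Invoking~\ref{it:sf-3} one last time, the projection formula gives
\[
t\cong\unit\otimes t\in\Loctens{U(\unit)\otimes t}=\Loctens{U(F(t))}\subseteq\cL,
\]
which completes the proof.

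The main subtlety lies in establishing the support formula, for which one must show that the Balmer-Favi idempotents~$g(\cP)$, which are only determined up to isomorphism, are preserved by~$F$ across the homeomorphism~$\varphi$. Although folklore, this identification deserves explicit verification: one uses that $g(\cP)$ is a smash product of idempotent triangles attached to Thomason subsets, that geometric tt-functors preserve such triangles, and that Thomason subsets correspond bijectively under the homeomorphism~$\varphi$.
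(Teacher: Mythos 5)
Your proof is correct in its main thrust and follows the paper up through the support formula $\Supp_{\cT}(t)=\varphi(\Supp_{\cS}(F(t)))$: both arguments establish conservativity of~$F$ from condition~(3) via the projection formula, deduce that $\varphi$ is a homeomorphism, transport weak noetherianity, and invoke $F(g_{\cT}(\cP))\cong g_{\cS}(\cQ)$ (which the paper attributes to~\cite[Theorem~6.3]{balmer-favi:idempotents}). The genuine difference is in the final step: the paper then simply cites~\cite[Proposition~12.7]{barthel-et-al:descent}, whereas you give a self-contained argument using $\tilde\cL:=\Loctens{F(\cL)}$ and the adjunction. Your route is more elementary and makes the mechanism transparent; the paper's citation is shorter and defers to a general descent lemma.

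There is, however, a small gap you should close. You verify only one composite of the two maps defining BHS-stratification: given a localizing tensor-ideal $\cL$ with $X=\Supp_{\cT}(\cL)$, you show $\cL=\SET{t}{\Supp_{\cT}(t)\subseteq X}$, i.e.\ that $\Supp_{\cT}$ is injective on localizing tensor-ideals with the prescribed left inverse. For the bijection you also need the other composite, $\Supp_{\cT}\bigl(\SET{t}{\Supp_{\cT}(t)\subseteq X}\bigr)=X$ for every subset $X$, i.e.\ surjectivity of $\Supp_{\cT}$. This follows at once from what you already have: the support formula gives $\Supp_{\cT}(g_{\cT}(\cP))=\varphi(\Supp_{\cS}(g_{\cS}(\cQ)))=\varphi(\{\cQ\})=\{\cP\}$ (using stratification of~$\cS$ to know $g_{\cS}(\cQ)\neq 0$), and then $\Supp_{\cT}\bigl(\Loctens{g_{\cT}(\cP)\mid\cP\in X}\bigr)=\bigcup_{\cP\in X}\{\cP\}=X$ by the (unconditional) Avrunin--Scott identity. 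You should include a sentence to this effect; as written, the proof establishes injectivity of the support map but stops short of the asserted bijection.
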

\begin{proof}
It follows easily from~\ref{it:sf-3} and the projection formula that $F$ is conservative, hence $\varphi$ is actually surjective~\cite[Proposition~7.1]{balmer-gallauer:TTG-Perm}, \ie a homeomorphism. Thus $\Spc(\cT^c)$ is weakly noetherian as well.
Let $\cP\in\Spc(\cT^c)$ and let $\cQ\in\Spc(\cS^c)$ be the unique preimage, \ie $\varphi(\cQ)=\cP$.
Then, by~\cite[Theorem~6.3]{balmer-favi:idempotents}, $F(g(\cP))=g(\varphi\inv(\{\cP\}))=g(\cQ)$.
For every $t\in\cT$ it follows that $t\otimes g(\cP)$ vanishes in~$\cT$ if and only if $F(t\otimes g(\cP))\simeq F(t)\otimes g(\cQ)$ vanishes in~$\cS$. In other words, $\Supp(F(t))=\varphi\inv(\Supp(t))$ for all~$t\in\cT$.
The result now follows from \cite[Proposition~12.7]{barthel-et-al:descent}.
\end{proof}

Here is an application to our categories~$\cT(G)=\DPerm(G;\kk)$ and $\cT(G)^c=\cK(G)$.

\begin{Cor}
\label{Cor:stratification-along-Res}%
Under \Cref{Hyp:general}, let $H\le G$ be a closed subgroup of index prime to~$p$ (\Cref{Def:index-prime-to-p}).
Assume that the tt-category $\cT(H)$ is BHS-stratified (\Cref{Rec:BHS-stratification}).
If $\rho_H=\Spc(\Res^G_H):\Spc(\cK(H))\to\Spc(\cK(G))$ is injective then $\rho_H$ is a homeomorphism and the tt-category $\cT(G)$ is BHS-stratified as well.
\end{Cor}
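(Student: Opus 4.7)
The plan is to apply the descent criterion \Cref{Prop:descent-stratification} to the geometric tt-functor $F=\Res^G_H\colon\cT(G)\to\cT(H)$, whose right adjoint is $U=\CoInd^G_H$. I would verify the three hypotheses in turn, each of which has essentially been prepared in the preceding material.

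First I would check hypothesis (1) of \Cref{Prop:descent-stratification}, namely that $\varphi=\Spc(F)=\rho_H$ is closed and injective. Injectivity is assumed, and closedness holds for every closed subgroup by \Cref{Prop:rho-closed}. Hypothesis (2), that $\cT(H)$ is BHS-stratified, is part of the assumption. The only nontrivial input is hypothesis (3), that $\unit_{\cT(G)}\in\Loctens{\CoInd^G_H(\unit)}$, and this is precisely what \Cref{Prop:Res-split-faithful} provides under the assumption that $H\le G$ has index prime to~$p$.

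With all three hypotheses in place, \Cref{Prop:descent-stratification} directly yields that $\cT(G)$ is BHS-stratified. Moreover, inspecting the proof of that proposition shows that conservativity of $F$ (itself a consequence of (3) and the projection formula) forces $\varphi=\rho_H$ to be surjective; together with the already-known closedness and the assumed injectivity, $\rho_H$ is therefore a continuous closed bijection between spectral spaces, hence a homeomorphism. This is the conclusion of the corollary.

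There is no real obstacle here: the entire content has been packaged into \Cref{Prop:Res-split-faithful}, \Cref{Prop:rho-closed}, and \Cref{Prop:descent-stratification}, and the corollary is just the combination. If there is anything subtle at all, it is the implicit point that to invoke \Cref{Prop:Res-split-faithful} one genuinely needs the prime-to-$p$ hypothesis (so that $\unit$ splits off from $\CoInd^G_H(\unit)$); this in turn is what guarantees conservativity of $\Res^G_H$ on the big categories, which is the engine behind the descent of stratification.
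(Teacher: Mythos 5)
Your proof is correct and takes exactly the same route as the paper: apply \Cref{Prop:descent-stratification} to $F=\Res^G_H$, using \Cref{Prop:rho-closed} for closedness of~$\rho_H$ and \Cref{Prop:Res-split-faithful} for hypothesis~(3). Your added observation that surjectivity of~$\rho_H$ (hence the homeomorphism claim) comes from conservativity of~$F$ is precisely the reasoning packaged inside the proof of \Cref{Prop:descent-stratification}.
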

\begin{proof}
The map $\rho_H$ is closed by \Cref{Prop:rho-closed}.
Thus we apply \Cref{Prop:descent-stratification} to $F=\Res^G_H$. Hypothesis~\ref{it:sf-3} is satisfied in this case by \Cref{Prop:Res-split-faithful}.
\end{proof}


\section{Stratification in the procyclic case}
\label{sec:stratification-procyclic}%


Having established a fairly complete picture of the tt-geometry of $\cK(\Zp)$ in \Cref{sec:procyclic}, we now turn to big tt-categories.
Our goal is to show BHS-stratification for the big tt-category~$\cT(\Zp)$; see~\Cref{Rec:BHS-stratification}.
As mentioned already, the space $\bar\W^\infty\cong\Spc(\cK(\Zp))$ is not noetherian which complicates matters a bit.
It is, however, \emph{generically noetherian} in the sense of~\cite[Definition~9.5]{barthel-heard-sanders:stratification-Mackey}.
\begin{Lem}
\label{Lem:Spc-Z_p-generically-noetherian}%
The space $\bar\W^\infty$ is generically noetherian, that is, the generalization closure of every point is noetherian.
\end{Lem}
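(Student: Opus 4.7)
My plan is to compute, for each point $x \in \bar\W^\infty$, the set of generalizations $\mathrm{Gen}(x) = \{\,y \mid x \in \overline{\{y\}}\,\}$, and observe that in every case it is finite, which will immediately give that $\mathrm{Gen}(x)$ is noetherian (any finite $T_0$-space is noetherian, and spectral spaces are $T_0$).

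To do this I would use \Cref{Thm:Zp} and \Cref{Prop:V^infty-topology} to read off the closures of all points: the closure of each $\gm_i$ (including $i=\infty$) is the singleton $\{\gm_i\}$, and the closure of each $\gp_j$ for $0<j<\infty$ is the three-element set $\{\gm_{j-1},\gp_j,\gm_j\}$. Dualising, the generalizations of $\gm_i$ for $0 < i < \infty$ are the three points $\{\gm_i,\gp_i,\gp_{i+1}\}$; of $\gm_0$ the two points $\{\gm_0,\gp_1\}$; of each $\gp_j$ only the point itself; and of $\gm_\infty$ only itself. So indeed $|\mathrm{Gen}(x)| \le 3$ for every $x$, and generic noetherianity follows.

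The only step worth pausing on is the verification that $\gm_\infty$ has no proper generalizations, i.e.\ that no $\gp_j$ or $\gm_i$ with $i<\infty$ specializes to $\gm_\infty$. This is immediate from the explicit description of closures above: none of those closures contain $\gm_\infty$, since each is finite and excludes it. Equivalently, one can argue from the defining criterion for closed subsets in \Cref{Thm:Zp}: $\{\gm_\infty\}$ itself is specialization-closed and finite, hence closed, so $\gm_\infty$ is a closed point with only itself as a generalization. This is the one place where the non-noetherian behaviour of the space near infinity could have posed a problem, but it works out because the Alexandroff point is closed.
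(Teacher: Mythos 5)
Your proposal is correct and takes essentially the same approach as the paper: both read off the closures from \Cref{Prop:V^infty-topology}, compute the generalization closure of each point, and observe that it always has at most three elements, hence is noetherian. (Incidentally, your indexing is the accurate one — the generalizations of $\gm_n$ for $1\le n<\infty$ are $\{\gm_n,\gp_n,\gp_{n+1}\}$; the paper's own proof writes this set with a slight shift of indices, but either way the conclusion of finiteness is immediate.)
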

\begin{proof}
By \Cref{Prop:V^infty-topology} the generalization closures of all the points are finite: They are $\{\gm_0,\gp_1\}$, $\{\gp_n\}$ and $\{\gp_{n-1},\gm_n,\gp_{n}\}$ for $1\le n<\infty$ and $\{\gm_\infty\}$.
\end{proof}

\begin{Rem}
\label{Rem:Spc-Z_p-weakly-noetherian}%
Every generically noetherian space~$X$ is weakly noetherian, that is, every point~$x\in X$ can be written as the intersection $\{x\}=V\cap W^c$ of a Thomason subset~$V$ with the complement of a Thomason subset~$W$, see~\cite[Lemma~9.9]{barthel-heard-sanders:stratification-Mackey}.
In our case, for each $x\in\bar\W^\infty$, we can realize $\{x\}=V\cap W^c$ as follows:
\begin{equation}\label{eq:weakly-visible}%
\begin{tabular}{cc|ccc|ccc|cc}
  $x$&&&$V$&&&$W^c$&&&$W$
  \\[.1em]
  \hline
  $\gm_\infty$&&&$\bar\W^\infty$&&&$\{\gm_{\infty}\}$&&&$\W^\infty\vphantom{I^{I^I}}$\\[.2em]
  $\gm_i$&&&$\{\gm_i\}$&&&$\bar\W^\infty$&&&$\emptyset$\\[.2em]
  $\gp_j$&&&$\bar\W^\infty$&&&$\{\gp_j\}$&&&$\bar\W^\infty\sminus\{\gp_j\}$
\end{tabular}
\end{equation}
\end{Rem}

\begin{Lem}
\label{Lem:Z_p-LTG}
The big tt-category $\cT(\Zp)$ satisfies the local-to-global principle in the sense of~\cite[Definition~3.8]{barthel-heard-sanders:stratification-Mackey}.
\end{Lem}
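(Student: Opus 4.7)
The plan is to verify the local-to-global principle directly by exploiting the explicit layered structure of $\bar\W^\infty\cong\Spc(\cK(\Zp))$ recorded in \Cref{Thm:Zp,Prop:V^infty-topology}: the finite Thomason subsets $V_n:=\bar\W^\infty_{\le n}$ form an increasing chain whose union $\W^\infty=\bigcup_n V_n$ is itself a Thomason subset of $\bar\W^\infty$, with complement the single closed point $\{\gm_\infty\}$. This should reduce the LTG principle for $\cT(\Zp)$ to the noetherian LTG on each piece $V_n$, together with a separate treatment of the point at infinity.

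More precisely, I would fix $t\in\cT(\Zp)$ and let $e_n\to\unit\to f_n$ denote the Thomason idempotent triangle associated with $V_n$. Since $V_n\subseteq V_{n+1}$, the $e_n$ form a directed system; taking the homotopy colimit produces a distinguished triangle
\[
\hocolim_n(e_n\otimes t)\to t\to\hocolim_n(f_n\otimes t)\to.
\]
I would then identify $\hocolim_n f_n\simeq g(\gm_\infty)$: the colimit $\hocolim_n e_n$ realizes the Thomason idempotent $e_{\W^\infty}$, so its cofiber $\hocolim_n f_n$ is the complementary idempotent $f_{\W^\infty}$, which by the weakly visible presentation $\{\gm_\infty\}=\bar\W^\infty\cap(\W^\infty)^c$ of~\eqref{eq:weakly-visible} coincides with $g(\gm_\infty)$. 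The cofiber in the above triangle therefore lies tautologically in $\Loc{g(\gm_\infty)\otimes t}$.

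For the fiber term, each $V_n$ is finite and therefore noetherian, so the local-to-global principle holds inside the smashing localization of $\cT(\Zp)$ at~$V_n$ (by the classical noetherian case recorded in~\cite{barthel-heard-sanders:stratification-Mackey}). This gives $e_n\otimes t\in\Loc{g(\cP)\otimes t\mid\cP\in V_n}$ for every $n$, and taking the homotopy colimit yields
\[
\hocolim_n(e_n\otimes t)\in\Loc{g(\cP)\otimes t\mid\cP\in\W^\infty}.
\]
Combined with the previous paragraph via the triangle, $t$ lies in $\Loc{g(\cP)\otimes t\mid\cP\in\bar\W^\infty}$, which is the desired LTG principle.

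The main anticipated obstacle is the continuity statement $\hocolim_n e_n= e_{\W^\infty}$, \ie that the smashing idempotent for a directed union of Thomason subsets is the homotopy colimit of the piece idempotents, so that its cofiber correctly matches $g(\gm_\infty)$. A secondary technical point is to make precise that the noetherian LTG inside the smashing localization at $V_n$ supplies the required containment for $e_n\otimes t$; this should be essentially formal, using that this smashing localization has spectrum homeomorphic to~$V_n$ and that its compact generators pull back from~$\cK(\Zp)$.
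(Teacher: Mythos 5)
Your overall decomposition is the same as the paper's: both exhaust $\W^\infty$ by the finite pieces $\W^n$, use the continuity fact $\hocolim_n f_{\W^n}\simeq f_{\W^\infty}=g(\gm_\infty)$, and reduce everything to a local-to-global statement over the finite piece $\W^n$. The one step that does not hold up as written is the appeal to ``the classical noetherian case'' for ``the smashing localization of $\cT(\Zp)$ at $V_n$.'' The smashing localization associated to the Thomason subset $V_n=\W^n$ is $f_{V_n}\otimes\cT(\Zp)$, whose spectrum is the \emph{complement} of $V_n$ --- this still contains $\gm_\infty$, so it is not noetherian. The piece actually supported on $V_n$, namely $e_{V_n}\otimes\cT(\Zp)=\Loc{\cT(\Zp)^c_{V_n}}$, is a localizing $\otimes$-ideal without a unit, not a tt-category, so the noetherian LTG criterion of~\cite{barthel-heard-sanders:stratification-Mackey} does not apply to it out of the box. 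Your intuition --- that the finiteness of $V_n$ should force $e_n\otimes t\in\Loctens{g(\cP)\otimes t\mid\cP\in V_n}$ --- is correct, but it needs an actual argument rather than a citation. The paper supplies exactly this by an explicit Mayer--Vietoris induction on $n$, proving $e_{\W^i}\in\cL:=\Loctens{g(\cP)\mid\cP}$ directly (which is equivalent to your containment after tensoring with $t$). If you replace your black-box appeal with that induction over the finite poset $V_n$, your proposal becomes the paper's proof; your identification of $\hocolim_n f_n$ with $g(\gm_\infty)$ and the final triangle argument are exactly as in the paper.
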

\begin{proof}
Compare the proof of~\cite[Theorem~3.6]{BIK:stratifying-stmod-kG} and its translation in~\cite[Theorem~3.21]{barthel-heard-sanders:stratification-Mackey}.
Consider the localizing ideal of~$\cT(\Zp)$
\[
\cL:=\Loctens{g(\cP)\mid \cP\in\Spc(\cK(\Zp))}
\]
where $g(\cP)$ is as in \Cref{Rec:BHS-stratification}.
We need to prove that~$\cL$ is the whole of~$\cT(\Zp)$.
Recall that if $\{\cP\}$ is Thomason then $g(\cP)=e_{\{\cP\}}$ and if $\{\cP\}^c$ is Thomason then $g(\cP)=f_{\{\cP\}^c}$.
The first case occurs for all $\gm_i$, $0\le i<\infty$ while the second case occurs for $\gm_\infty$ as well as all the $\gp_j$.
See~\eqref{eq:weakly-visible}.
We deduce that all of the following idempotents belong to $\cL$:
\[
e_{i}:= e_{\{\gm_i\}}, \,0\le i<\infty, \quad f_j:=f_{\{p_j\}^c}, \,0<j<\infty, \quad f_\infty:= f_{\W^\infty}.
\]
We will now prove the following two claims:
\begin{enumerate}[label=\rm(\arabic*)]
\item For all $0\le i< \infty$, the idempotent $e_{\le i}:=e_{\W^{i}}$ also belongs to~$\cL$.
\item We have $f_\infty=\hocolim_i f_{\le i}$ (where $f_{\le i}:=f_{\W^i}$).
\end{enumerate}
Together these would imply the lemma.
Indeed, by the first claim $\hocolim_i e_{\le i}$ belongs to~$\cL$.
By the second claim and the exact triangle
\[
\hocolim_i e_{\le i}\to\unit\to\hocolim_i f_{\le i}
\]
we then conclude that $\unit\in\cL$ as required.

The second claim follows from generalities on idempotents and the fact that $\W^\infty=\cup_{i}\W^i$.
For the first claim we proceed by induction on $i$.
The case $i=0$ is clear since $e_{\le 0}=e_0\in\cL$.
And assuming $e_{\le i}\in\cL$ we have also $e_{\le i}\vee e_{i+1}\in\cL$ by the Mayer-Vietoris triangle~\cite[Theorem~3.13]{balmer-favi:idempotents}.
Consider then the triangle
\[
e_{\le i}\vee e_{i+1}\to e_{\le i+1}\to e_{\le i+1}\otimes f_{\W^{i}\cup\{\gm_{i+1}\}}.
\]
It is easy to see that the last term is equal to $e_{\le i+1}\otimes f_{i+1}\in\cL$ thus the claim.
\end{proof}

For the next result recall Krause's homotopy category of injectives~$\K\Inj(G;\kk)$ from~\cite[Section~2]{krause:stabX}.
It is a compactly generated tensor triangulated category whose compact part identifies with $\Db(\kkG)$.
\begin{Prop}
\label{Prop:hereditary-minimality}%
Let $G$ be a profinite group with $p$-cohomological dimension~$\le 1$.
Then $\K\Inj(G;\kk)$ has a unique non-trivial localizing ideal and $\Spc(\Db(\kkG))=\ast$.
\end{Prop}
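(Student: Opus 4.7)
The plan proceeds in two parts.

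\textbf{Spectrum of $\Db(\kkG)$.} By \Cref{Rem:Vee-Dirac} there is a homeomorphism $\Spc(\Db(\kkG)) \cong \Spech(\Hm(G;\kk))$. The hypothesis $\mathrm{cd}_p(G) \le 1$ forces $\Hm^n(G;\kk) = 0$ for all $n \ge 2$, so any homogeneous element $a$ of positive degree satisfies $a^2 \in \Hm^{\ge 2}(G;\kk) = 0$. Thus $\Hm^{\ge 1}(G;\kk)$ is a square-zero ideal and the unique homogeneous prime of $\Hm(G;\kk)$ (its quotient being the field $\kk$), giving $\Spech(\Hm(G;\kk)) = \ast$.

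\textbf{Localizing ideals.} I would analyze $\K\Inj(G;\kk)$ via Krause's localization sequence
\[
\K_{\mathrm{ac}}\Inj(G;\kk) \hookrightarrow \K\Inj(G;\kk) \xrightarrow{j^*} \Der(\kkG)
\]
and first show that both outer terms admit only trivial localizing tensor-ideals. The key input is that $\mathrm{cd}_p(G) \le 1$ makes the abelian category $\Mod(G;\kk)$ hereditary: the identity $\Ext^n_{\kkG}(M,N) = \Hm^n(G; \Hom_\kk(M,N))$ gives $\Ext^{\ge 2}_{\kkG} = 0$ for any discrete modules $M, N$. Hereditariness forces every object of $\Der(\kkG)$ to split as a coproduct of shifts of its cohomology modules, so $\Der(\kkG)$ is tt-generated by its unit $\kk$; together with $\Spc(\Db(\kkG)) = \ast$ and compact generation this shows $\Der(\kkG)$ has only the two trivial localizing tensor-ideals. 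A parallel BIK-style argument with the same single-point cohomological support handles $\K_{\mathrm{ac}}\Inj(G;\kk)$.

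To assemble, let $\cL$ be a localizing tensor-ideal of $\K\Inj(G;\kk)$. Then $\cL \cap \K_{\mathrm{ac}}\Inj(G;\kk)$ is one of the two localizing tensor-ideals of $\K_{\mathrm{ac}}\Inj(G;\kk)$, and $j^*(\cL)$ is one of the two of $\Der(\kkG)$. The a priori fourth case, $\cL \cap \K_{\mathrm{ac}}\Inj(G;\kk) = 0$ while $j^*(\cL) = \Der(\kkG)$, would yield a ``transverse'' localizing tensor-ideal, and is ruled out as follows: choosing $y \in \cL$ with $j^*(y) \neq 0$, the tensor-ideal property gives $y \otimes z \in \cL \cap \K_{\mathrm{ac}}\Inj(G;\kk) = 0$ for every $z \in \K_{\mathrm{ac}}\Inj(G;\kk)$, but hereditariness permits $z$ to be chosen so that $y \otimes z \neq 0$, yielding a contradiction. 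The three remaining cases correspond to $\cL \in \{0,\, \K_{\mathrm{ac}}\Inj(G;\kk),\, \K\Inj(G;\kk)\}$, identifying $\K_{\mathrm{ac}}\Inj(G;\kk)$ as the unique non-trivial localizing tensor-ideal.

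The main obstacle is the transversality argument: producing, for a given $y$ with $j^*(y) \neq 0$, an acyclic K-injective $z$ with $y \otimes z \neq 0$. This genuinely uses $\mathrm{cd}_p(G) \le 1$, since analogous splittings can and do occur in higher cohomological dimension. Adapting BIK-stratification to the profinite-hereditary setting is a secondary technical point.
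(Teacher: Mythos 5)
Your proof of the first assertion, $\Spc(\Db(\kkG))=\ast$, via \Cref{Rem:Vee-Dirac} and the observation that $\Hm^{\ge 1}(G;\kk)$ is a square-zero homogeneous ideal, is correct and in fact slightly more direct than the paper's route, which derives this statement as a by-product of the hereditariness analysis.

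However, the second part contains a fundamental misconception. You read ``unique non-trivial localizing ideal'' as ``unique proper non-zero localizing ideal'' and propose $\K_{\mathrm{ac}}\Inj(G;\kk)$ as the candidate. But once the discrete module category is hereditary, \emph{all} cycles of an acyclic complex of injectives are themselves injective (being subobjects of injectives), so every such complex is contractible and $\K_{\mathrm{ac}}\Inj(G;\kk)=0$; this is precisely Krause's Example~3.10 giving $\K\Inj(G;\kk)\simeq\D(\kkG)$. The proposition is therefore asserting that the only non-zero localizing ideal of $\K\Inj(G;\kk)$ is the whole category. Your entire three-term decomposition via Krause's localization sequence degenerates: the ``transversality'' step asks for an acyclic K-injective $z$ with $y\otimes z\neq 0$, and no such $z$ exists.

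There are two further gaps. First, your claim that ``$\Der(\kkG)$ is tt-generated by its unit $\kk$; together with $\Spc(\Db(\kkG))=\ast$ and compact generation this shows $\Der(\kkG)$ has only the two trivial localizing tensor-ideals'' is not a valid inference: being tensor-generated by the unit is automatic for any tt-category, and $\Spc=\ast$ at the compact level does not by itself rule out proper non-zero localizing tensor-ideals in the big category — that implication is exactly the stratification statement one is trying to establish. The paper instead observes that in the hereditary setting every object of $\D(\kkG)$ splits as $\bigoplus_i \Hm_i(t)[i]$ and uses this to show $\Hom_{\D(\kkG)}(t,s[m])\neq0$ for non-zero $t,s$ and suitable $m$, then applies a BIK-type minimality lemma. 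Second, your stated identity $\Ext^n_{\kkG}(M,N)=\Hm^n(G;\Hom_\kk(M,N))$ holds only for finite-dimensional $M$ (so that $\Hom_\kk(M,N)\cong M^*\otimes N$ remains a discrete module); for infinite-dimensional $M$ one must write $M$ as a filtered union of finite-dimensional submodules and control the $\lim^1$ term, using right-exactness of $\Hm^1(G;-)$ in cohomological dimension $\le 1$. This additional argument is needed to conclude that $\Mod(G;\kk)$ is genuinely hereditary.
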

\begin{proof}
Our assumption buys us that $\Hm^i(G;N)=0$ for all $N\in\Mod(G;\kk)$ and $i>1$.
Let $M\in\Mod(G;\kk)$ be a finite-dimensional discrete module.
Note that for each $i>1$ we have
\begin{align*}
               \Ext^i(M,N)
             \cong\Ext^i(\kk,M^*\otimes N)\cong\Hm^i(G;M^*\otimes N) = 0.
\end{align*}
We would like to deduce the same for an infinite-dimensional~$M$ and for this, naturally, we write $M=\colim_\alpha M_\alpha$ as a union of finite-dimensional ones.
It then suffices to show that the canonical map
\begin{equation}
\label{eq:ext-continuous}
\Ext^i(M,N)=\Ext^i(\colim M_\alpha,N)\to\lim\Ext^i(M_\alpha,N)=0
\end{equation}
is an isomorphism.
Choose an injective resolution $N\to I^\bullet$ and let $C_\alpha=\Hom(M_\alpha,I^\bullet)$ be the cochain complex computing the relevant Ext-groups.
An inclusion $M_\alpha\into M_\beta$ induces a degreewise surjection $C_\alpha\twoheadleftarrow C_\beta$ as $I^\bullet$ is degreewise injective.
It then follows~\cite[Theorem~3.5.8]{weibel:homalg} that~\eqref{eq:ext-continuous} is surjective with kernel
\[
{\lim}^1\Ext^{i-1}(M_\alpha,N)\cong{\lim}^1\Hm^{i-1}(G;M_\alpha^*\otimes N).
\]
If $i>2$ these terms clearly vanish.
And for $i=2$ we can use that the functor $\Hm^1(G;-)$ is right exact, again by our assumption on~$G$, hence the transition maps in the inverse system are surjective and the higher inverse limit vanishes~\cite[Proposition~3.5.7]{weibel:homalg}.

In other words, the abelian category $\Mod(G;\kk)$ is hereditary, which has two consequences that are relevant for us.
First, we have $\K\Inj(G;\kk)\simeq\D(\kkG)$, the unbounded derived category of~$\Mod(G;\kk)$ by~\cite[Example~3.10]{krause:stabX}.
And secondly, in $\D(\kkG)$ every object $t$ splits as $t\cong\oplus_{i\in\ZZ}\Hm_i(t)[i]$ so that $\Hom(t,s)$ is in bijection with families $\alpha_i:\Hm_i(t)\to \Hm_i(s)$ and $\beta_i\in\Ext^1_{\Mod(G;\kk)}(\Hm_i(t),\Hm_{i+1}(s))$.
In particular, if $t$ and $s$ are both non-zero then
\[
\Hom_{\D(\kkG)}(t,s[m])\neq 0
\]
for a suitable choice of~$m\in\ZZ$.
By~\cite[Lemma~3.9]{BIK:stratifying-stmod-kG} we conclude that $\D(\kkG)\simeq\K\Inj(G;\kk)$ is the minimal non-zero localizing tensor ideal as announced.
\end{proof}

\begin{Thm}
\label{Thm:Z_p-stratification}%
The big tt-category $\cT(\Zp)$ is BHS-stratified (\Cref{Rec:BHS-stratification}).
Moreover, it satisfies the telescope conjecture.
\end{Thm}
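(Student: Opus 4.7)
My plan is to establish BHS-stratification through the local-to-global plus pointwise-minimality criterion, and then to deduce the telescope property by a direct analysis of smashing idempotents over $\bar\W^\infty$.

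For stratification, since the local-to-global principle holds by \Cref{Lem:Z_p-LTG} and $\bar\W^\infty$ is weakly noetherian by \Cref{Rem:Spc-Z_p-weakly-noetherian}, the Barthel--Heard--Sanders criterion reduces the claim to verifying \emph{minimality} at each point $\cP\in\bar\W^\infty$: the local category $\Gamma_\cP\cT(\Zp)$ has no proper non-zero localizing tensor-ideal. The points split into three families: the closed points $\gm_i$ with $i<\infty$, the generic points $\gp_j$ with $j<\infty$, and the point at infinity $\gm_\infty$.

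For the finite-index points $\gm_i$ and $\gp_j$, I would leverage the BHS-stratification of $\cT(C_{p^n})$ from \cite[Section~9]{balmer-gallauer:TTG-Perm}. Pick $n$ so that the given point lies in the image of $\W^n\hookrightarrow\bar\W^\infty$. Choosing Thomason subsets $V,W\subseteq\W^n$ with $\{\cP\}=V\cap W^c$ and inflating them to Thomason subsets of $\bar\W^\infty$ carves out the same point in~$\bar\W^\infty$. Applying \Cref{Lem:fully-faithful-localization} twice along the fully faithful coproduct-preserving inflation $\cT(C_{p^n})\hookrightarrow\cT(\Zp)$ (\Cref{Prop:K(G)-union}) then identifies the local category at $\cP$ in $\cT(\Zp)$ with the local category at $\cP$ in $\cT(C_{p^n})$, so minimality is inherited from the finite case.

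For the point at infinity $\gm_\infty$, I would use \Cref{Prop:hereditary-minimality}: since $\Zp$ has $p$-cohomological dimension one, $\K\Inj(\Zp;\kk)$ has a unique non-trivial localizing tensor-ideal. The canonical geometric tt-functor $F\colon\cT(\Zp)\to\K\Inj(\Zp;\kk)$ extending $\cK(\Zp)\to\Db(\Zp;\kk)$ kills all Koszul complexes $\kos[\Zp]{\ideal{p^n}}$ (they are acyclic in the derived category), and hence by \Cref{Lem:Z_p-generators} kills the compactly generated localizing ideal associated to the Thomason subset $\W^\infty\subset\bar\W^\infty$. Therefore $F$ factors through $\cT(\Zp)/\Loc{\cK(\Zp)_{\W^\infty}}\simeq\Gamma_{\gm_\infty}\cT(\Zp)$, and minimality of the target transfers to the local category after one checks that the induced functor is conservative there.

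For the telescope property, once stratification is established, any smashing idempotent $e\in\cT(\Zp)$ has specialization-closed support $Y\subseteq\bar\W^\infty$. If $\gm_\infty\notin Y$, then $Y\subseteq\W^\infty$ is automatically Thomason by \Cref{Prop:V^infty-topology}(d). If $\gm_\infty\in Y$, I would exploit the smashing pair $(e_{\W^\infty},f_\infty)$ from the proof of \Cref{Lem:Z_p-LTG}: minimality at $\gm_\infty$ forces $e\otimes f_\infty=f_\infty$, while $e\otimes e_{\W^\infty}$ is a smashing idempotent in the compactly generated subcategory $e_{\W^\infty}\cT(\Zp)$, whose spectrum $\W^\infty$ has only Thomason specialization-closed subsets. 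A Mayer--Vietoris reassembly then forces $Y$ to be cofinite in $\bar\W^\infty$, hence Thomason.

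The main obstacle is the minimality at $\gm_\infty$: unlike the other points, $\gm_\infty$ is not visible at any finite stage, so the argument genuinely requires a non-finite input. The hereditary vanishing provided by \Cref{Prop:hereditary-minimality} is what makes this tractable, but one must carefully identify $\Gamma_{\gm_\infty}\cT(\Zp)$ with (the minimal piece of) $\K\Inj(\Zp;\kk)$ and check that the comparison functor really detects non-vanishing of objects in the local category at $\gm_\infty$.
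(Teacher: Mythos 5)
Your overall strategy matches the paper: reduce to local-to-global plus pointwise minimality, handle the points of $\W^\infty$ by inflation from a finite cyclic stage via \Cref{Lem:fully-faithful-localization}, and handle $\gm_\infty$ via \Cref{Prop:hereditary-minimality} and the hereditary property of $\Mod(\Zp;\kk)$. Both the structure and the key inputs are the same.

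There is, however, one genuine subtlety you gloss over that the paper goes out of its way to flag. For $\cP\in\W^\infty$ you say "pick $n$ so that $\cP\in\W^n$" and inflate from $C_{p^n}$. But if $\cP=\gm_n$ is the \emph{last} point of $\W^n$, then the fiber of $\Spc(\Infl)\colon\bar\W^\infty\to\W^n$ over $\cP$ is the entire ray $\bar\W^\infty_{\ge n}$, not a singleton, and $\Spc(\Infl)^{-1}(\supp(\cQ))\neq\supp(\cP)$. Your inflated Thomason pair thus does not carve out $\{\cP\}$ in $\bar\W^\infty$. The paper avoids this by always inflating from $C_{p^{n+1}}$, one step higher than the smallest admissible index, precisely so that $\Spc(\Infl)$ restricts to a bijection on generalizations of $\cP$ and of $\cQ$. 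You should incorporate this.

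A second point: \Cref{Lem:fully-faithful-localization} only produces a \emph{fully faithful} functor between the quotients; it does not by itself give the identification you assert. Applying it "twice" does not get around this, and in any case the minimality criterion in~\cite[Corollary~5.3]{barthel-heard-sanders:stratification-Mackey} is phrased for the Verdier quotient $\cT/\Loc{\cP}$, not for $\Gamma_\cP\cT$. The paper upgrades the fully faithful comparison to an equivalence by showing essential surjectivity directly: every generator $\kk(\Zp/\ideal{p^m})$ lies in the essential image because it is either zero in the localization (for $m>n+1$, by \Cref{Lem:Z_p-generators}) or inflated from $C_{p^{n+1}}$. You need an argument of this kind.

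For $\gm_\infty$ you honestly flag the open step, namely identifying $\cT(\Zp)/\Loc{\gm_\infty}$ with $\K\Inj(\Zp;\kk)$; the paper resolves this by citing~\cite[Remark~4.21]{balmer-gallauer:resol-big} rather than reconstructing the comparison functor, which is cleaner than your sketch and avoids the conservativity check you leave open. Finally, for the telescope property the paper simply invokes generic noetherianity (\Cref{Lem:Spc-Z_p-generically-noetherian}) together with~\cite[Theorem~9.11]{barthel-heard-sanders:stratification-Mackey}; your direct analysis of smashing idempotents over $\bar\W^\infty$ is plausible but considerably more work, and the cofinality claim in your Mayer--Vietoris step would still need to be justified.
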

\begin{proof}
We verified that $\Spc(\cK(\Zp))$ is weakly noetherian in \Cref{Rem:Spc-Z_p-weakly-noetherian}.
We also verified that $\cT(\Zp)$ satisfies the local-to-global principle in \Cref{Lem:Z_p-LTG}.
It remains to prove the minimality property (\cite[Theorem~4.1]{barthel-heard-sanders:stratification-Mackey}).
By~\cite[Corollary~5.3]{barthel-heard-sanders:stratification-Mackey}, we need to show for every $\cP\in\Spc(\cK(\Zp))$ that the local category $\cT(\Zp)/\Loc{\cP}$ satisfies minimality at its closed point.
For $\cP\in\W^\infty$, there exists some $0\le n<\infty$ such that $\cP\in\bar\W^\infty_{\le n}$.
Consider the inflation functor $F\colon\cT(C_{p^{n+1}})\to\cT(\Zp)$ and let $\cQ:=F\inv(\cP)\in\W_{n+1}$ be the corresponding prime.
Let $V\subseteq\W_{n+1}$ be the Thomason subset $\supp(\cQ)$ and $W\subseteq\bar\W^\infty$ its preimage under~$\Spc(F)$.
Now it pays off that we inflated from $C_{p^{n+1}}$ (instead of $C_{p^n}$, for example): This ensures that $\Spc(F)$ restricts to a bijection between the generalizations of~$\cP$ and those of $\cQ$ so that we have $W=\supp(\cP)$.
(Indeed, the support of a prime is the complement of its generalization-closure.)
By \Cref{Lem:fully-faithful-localization}, the functor $F$ then induces a fully faithful tt-functor
\begin{equation}
\label{eq:bar-inflation}
\frac{\cT(C_{p^{n+1}})}{\Loc{\cQ}}\longrightarrow\frac{\cT(\Zp)}{\Loc{\cP}}
\end{equation}
which turns out to be an equivalence.
Indeed, it suffices to show that every $\kk(\Zp/\ideal{p^m})$ is in the essential image.
For $m> n+1$ these are zero, by \Cref{Lem:Z_p-generators}.
And the remaining ones are indeed inflated from $C_{p^{n+1}}$.
Minimality of $\cT(\Zp)/\Loc{\cP}$ at its closed point now follows from this equivalence~\eqref{eq:bar-inflation} and the reverse direction of~\cite[Corollary~5.3]{barthel-heard-sanders:stratification-Mackey} since we know stratification for finite groups, by \cite[Theorem~9.11]{balmer-gallauer:TTG-Perm}.

For $\cP=\gm_\infty$ we need to show that $\cT(\Zp)/\Loc{\gm_\infty}=\K\Inj(\Zp;\kk)$
satisfies minimality.
(For the identification of the two categories see~\cite[Remark~4.21]{balmer-gallauer:resol-big}.)
But $\Zp$ is a free pro-$p$-group and therefore has $p$-cohomological dimension~one~\cite[Corollary~7.5.2]{ribes-zalesskii:profinite}.
Minimality then follows from \Cref{Prop:hereditary-minimality}.

The telescope conjecture then follows from \Cref{Lem:Spc-Z_p-generically-noetherian} together with~\cite[Theorem~9.11]{barthel-heard-sanders:stratification-Mackey}.
\end{proof}

\begin{Cor}
\label{Cor:procyclic}%
Let $G$ be a profinite group of the form~$G=H\rtimes \Zp$ where $H$ is a group of order prime to~$p$. (For instance, $G$ could be an abelian or more generally a pro-nilpotent profinite group, with $p$-Sylow $\Zp$, \eg $G=\hat{\bbZ}$ the profinite integers.)
Then $\Spc(\cK(G))\cong\bar\W^\infty$ and $\cT(G)$ is BHS-stratified and satisfies the telescope conjecture.
\end{Cor}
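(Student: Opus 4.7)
The plan is to transfer the results for $\Zp$ obtained in \Cref{Thm:Zp,Thm:Z_p-stratification} to $G$ via restriction to the pro-$p$-Sylow, which in our setup is exactly the $\Zp$-factor.

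Since $H$ has order prime to~$p$ and $\Zp$ is a pro-$p$-group, the subgroup $\Zp\le G$ is a pro-$p$-Sylow of $G$ of index prime to~$p$ (\Cref{Def:index-prime-to-p}). The semidirect product structure provides a splitting of the inclusion $\Zp\into G$, so \Cref{Exa:pronilpotent} applies directly and yields a homeomorphism $\rho_{\Zp}\colon\Spc(\cK(\Zp))\isoto\Spc(\cK(G))$. Composing this with the identification in \Cref{Thm:Zp} gives $\Spc(\cK(G))\cong\bar\W^\infty$, settling the first assertion.

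For BHS-stratification, I would invoke \Cref{Cor:stratification-along-Res} applied to $\Zp\le G$. All three hypotheses hold: $\Zp$ has index prime to~$p$; the big category $\cT(\Zp)$ is BHS-stratified by \Cref{Thm:Z_p-stratification}; and $\rho_{\Zp}$ is injective since we just saw that it is even a homeomorphism. The corollary then delivers BHS-stratification of $\cT(G)$.

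Finally, for the telescope conjecture, the spectrum $\Spc(\cK(G))\cong\bar\W^\infty$ is generically noetherian by \Cref{Lem:Spc-Z_p-generically-noetherian}. Combined with the BHS-stratification just established, \cite[Theorem~9.11]{barthel-heard-sanders:stratification-Mackey} produces the telescope property for $\cT(G)$. I do not foresee any real obstruction: the entire technical burden was already borne in the procyclic case, and the reduction from $G$ to~$\Zp$ amounts to assembling existing tools, namely the splitting of the semidirect product and the prime-to-$p$ index technology developed in \Cref{sec:spectrum-KG,sec:stratification}. The only subtlety worth a second look is checking that \Cref{Cor:stratification-along-Res} is applicable in our non-noetherian setting, but this was explicitly built into its formulation.
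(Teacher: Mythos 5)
Your proposal is correct and matches the paper's proof essentially line for line: homeomorphism via restriction to the pro-$p$-Sylow~$\Zp$ (the paper routes this through \Cref{Rem:procyclic-Zp}, which in turn invokes \Cref{Exa:pronilpotent} that you cite directly), then BHS-stratification via \Cref{Cor:stratification-along-Res} and \Cref{Thm:Z_p-stratification}, and finally the telescope property from \Cref{Lem:Spc-Z_p-generically-noetherian} together with \cite[Theorem~9.11]{barthel-heard-sanders:stratification-Mackey}. Your explicit verification of the three hypotheses of \Cref{Cor:stratification-along-Res} is a welcome, if minor, expansion of what the paper leaves implicit.
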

\begin{proof}
By \Cref{Rem:procyclic-Zp}, the map $\rho_{\Zp}:\Spc(\cK(\Zp))\to\SpcKG$ is a homeomorphism.
Stratification then follows from \Cref{Thm:Z_p-stratification,Cor:stratification-along-Res}.
And the telescope conjecture follows again from \Cref{Lem:Spc-Z_p-generically-noetherian} together with~\cite[Theorem~9.11]{barthel-heard-sanders:stratification-Mackey}.
\end{proof}

\begin{Exa}
\label{Exa:non-weakly-noetherian}%
We finish with an example showing that the results in this section will not generalize to arbitrary profinite groups.

Let $G=(C_p)^{\bbN}$ be a countably infinite pro-elementary abelian group.
In that case, the associated cohomological support space is
\[
\Vee{G}\cong\Spech(\Hm^\sbull(G;\kk))\cong\Spech\left(\kk[\xi_i\mid i\in\bbN]\otimes_k\Lambda_{\kk}(\eta_i\mid i\in\bbN)\right)\cong\bar{\bbP}^{\infty}_{\kk},
\]
an infinite-dimensional projective space with a unique closed point attached on top.
We claim that this space is not weakly noetherian.
More precisely, that the unique closed point is not visible (equivalently, not weakly visible).
Recall this amounts to showing that it is not the support of any object in~$\Db(\kk G)$.
But every such object is inflated from a finite quotient~$(C_p)^I$, where $I\subset\bbN$ is a finite subset.
Let~$J=\bbN\bs I$ be the complement and $t\in\Db(\kk (C_p)^I)$ a non-zero object.
Writing $\pi=\Spc(\Infl)$ and $\rho=\Spc(\Res)$, with appropriate decorations, we then have
\begin{align*}
  \rho_J\inv(\supp(\Infl^{C_p^I}_{G}(t)))&=\rho_J\inv\pi_I\inv(\supp(t))\\
                                         &=\pi_{\emptyset}\inv\rho_{\emptyset}\inv(\supp(t))\\
                                         &=\pi_{\emptyset}\inv(\Spc(\Db(\kk)))\\
                                         &=\Spc(\Db(\kk(C_p^J)))
\end{align*}
using the fact that the composite $(C_p)^J\to G\to (C_p)^I$ factors through the trivial group~$\ast=(C_p)^\emptyset$.
This shows that the support of~$\Infl^{C_p^I}_{G}(t)$ contains much more than just the unique closed point in~$\bar{\bbP}^\infty$.

As $\check{\psi}^1\colon\Spc(\Db(\kk G)) \hook \SpcKG$ is a spectral map, we deduce that the latter is not weakly noetherian either (nor, \textsl{a fortiori}, generically noetherian).
In anticipation of the next section we also mention that restricting to absolute Galois groups does not change these comments.
For example, the absolute Galois group~$\Zp^{\bbN}$ admits~$G$ as a quotient.
(Here, we are using~\cite[p.~352]{geyer:abelian-absolute-galois}.)
Letting~$H\le\Zp^{\bbN}$ be the kernel of this quotient map we observe, similarly, that $\psi^H\colon\SpcKG\hook\Spc(\cK(\Zp^{\bbN}))$ is spectral so that $\Spc(\cK(\Zp^{\bbN}))$ cannot be weakly noetherian either.
\end{Exa}


\section{Artin motives over a finite base field}
\label{sec:artin-finite-fields}%


We can translate from procyclic groups to Artin motives over finite fields, via Voevodsky's result~\cite[\S\,3.4]{Voevodsky00}.

\begin{Rec}
\label{Rec:Artin-motives}%
Let $\FF$ be a field and denote by $G=\Gal(\FF^{\mathrm{sep}}/\FF)$ its absolute Galois group with the Krull topology.
The classical Grothendieck-Galois correspondence induces a canonical equivalence of tt-categories $\DPerm(G;\kk)\simeq \DAMbig(\FF;\kk)$ with the triangulated \emph{category of Artin motives}.
In particular, the \emph{geometric Artin motives} $\DAM(\FF;\kk)$, which form the compact part, identify with $\cK(G)$.
We refer for a detailed account of these equivalences (and more) to the survey article~\cite{balmer-gallauer:Dperm}.
\end{Rec}

\begin{Rem}
\label{Rem:wild}%
It follows from Voevodsky's result and the usual `wilderness' of modular representation theory that $\DAM(\bbF;\kk)$ is typically wild
when $\chara(\kk)=p>0$ and the absolute Galois group admits quotients of large enough $p$-rank.
\end{Rem}

\begin{Cor}
\label{Cor:DAM}%
Let $\FF$ be a finite field, or any field with abelian (or pro-nilpotent) absolute Galois group whose $p$-Sylow is isomorphic to~$\Zp$.
\begin{enumerate}[wide,labelindent=0pt,label=\rm(\alph*), ref=\rm(\alph*)]
\item
\label{it:DAM-Spc}%
The space $\Spc(\DAM(\FF;\kk))$ canonically identifies with $\bar\W^\infty$ of~\Cref{Thm:Zp}.
\smallbreak
\item
\label{it:DAM-stratified}%
The big tt-category $\DAMbig(\FF;\kk)$ is stratified and satisfies the telescope conjecture.
\end{enumerate}
\end{Cor}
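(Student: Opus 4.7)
The plan is to reduce both assertions to \Cref{Cor:procyclic} by invoking Voevodsky's equivalence from \Cref{Rec:Artin-motives}. Write $G := \Gal(\bbF^{\mathrm{sep}}/\bbF)$ for the absolute Galois group; \Cref{Rec:Artin-motives} then provides equivalences of tt-categories $\DAMbig(\bbF;\kk) \simeq \cT(G)$ and, on compacts, $\DAM(\bbF;\kk) \simeq \cK(G)$. The entire corollary thus translates into the statement that $\Spc(\cK(G)) \cong \bar\W^\infty$ and that $\cT(G)$ is BHS-stratified and satisfies the telescope conjecture.

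Next I would verify that $G$ meets the hypothesis of \Cref{Cor:procyclic}, namely that $G \cong H \rtimes \Zp$ with $H$ of order prime to~$p$. For a finite field, the absolute Galois group is well known to be the profinite completion $\hat{\bbZ}$, topologically generated by Frobenius. The factorization $\hat{\bbZ} \cong \Zp \times \prod_{\ell \neq p} \bbZ_\ell$ immediately exhibits the required splitting, with $H := \prod_{\ell \neq p} \bbZ_\ell$ being an inverse limit of finite groups of order coprime to $p$, hence of order prime to~$p$ in the sense of \Cref{Def:index-prime-to-p}. In the more general case of an abelian or pro-nilpotent absolute Galois group whose $p$-Sylow is $\Zp$, the desired splitting is provided by \cite[Proposition~2.3.8]{ribes-zalesskii:profinite}, exactly as recalled in \Cref{Exa:pronilpotent}.

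With these hypotheses verified, part~\ref{it:DAM-Spc} follows from the homeomorphism $\Spc(\cK(G)) \cong \bar\W^\infty$ of \Cref{Cor:procyclic} together with the explicit description of $\bar\W^\infty$ in \Cref{Thm:Zp}. Part~\ref{it:DAM-stratified} is precisely the stratification and telescope statements of \Cref{Cor:procyclic}, transported across the equivalence $\cT(G) \simeq \DAMbig(\bbF;\kk)$. There is no substantive obstacle here: the corollary is essentially a translation exercise, the real work having been carried out in \Cref{Thm:Z_p-stratification} and its extension \Cref{Cor:procyclic}.
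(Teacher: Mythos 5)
Your proof is correct and follows essentially the same path as the paper: translate via the equivalence of \Cref{Rec:Artin-motives} to $\cT(G)$ for $G$ the absolute Galois group, and then invoke \Cref{Cor:procyclic}. The extra detail you supply (checking that $G$ meets the semidirect-product hypothesis, both for $\hat\bbZ=\Zp\times\prod_{\ell\neq p}\bbZ_\ell$ and via \cite[Proposition~2.3.8]{ribes-zalesskii:profinite} in the pro-nilpotent case) is implicit in the paper's one-line proof and in the parenthetical of \Cref{Cor:procyclic}.
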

\begin{proof}
By \Cref{Rec:Artin-motives}, we have $\DAMbig(\FF;\kk)\simeq\cT(\hat\bbZ)$ and use \Cref{Cor:procyclic}.
\end{proof}

In the remainder of the section we compute the spectrum of~$\DAM(\FF;\ZZ)$ with integral coefficients.
\begin{Cor}
\label{Cor:Fq-stratification-over-Z}%
Let $\comp\colon\Spc(\DAM(\FF;\ZZ))\to\Spec(\ZZ)$ be the comparison map.\,(\,\footnote{\,Recall that it sends a prime~$\cP$ to the (necessarily) prime ideal of integers~$n$ such that the cone of~$\ZZ\xto{n}\ZZ$ does not belong to~$\cP$.})
\begin{enumerate}[wide, labelindent=0pt, label=\rm(\alph*), ref=\rm(\alph*)]
\item Its fiber over~$(0)$ is a singleton:~$\comp\inv(\{0\})=:\{\gm_{\infty}(0)\}$;
\item Its fiber over~$(p)$ with~$p>0$ is a copy of the space~$\bar\W^\infty$, denoted~$\bar\W^\infty(p)$.
\end{enumerate}
The supports of objects are precisely those subsets that intersect each $\bar\W^\infty(p)$ in a closed subset of~$\bar\W^\infty$ and in addition:
\begin{enumerate}[label=\rm(\Roman*),ref=\rm(\Roman*)]
\item \label{it:type-finite} either, do not contain $\gm_\infty(0)$ and meet only finitely many~$\bar\W^\infty(p)$;
\item \label{it:type-cofinite} or, contain $\gm_\infty(0)$ and are cofinite.
\end{enumerate}
In particular, these subsets form a basis of closed subsets in~$\Spc(\DAM(\FF;\ZZ))$.
\end{Cor}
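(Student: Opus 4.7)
The plan is to compute the fibers of $\comp$ via base-change to residue fields, then characterize the supports of compact objects, and finally deduce the basis claim from general tt-geometry. For the fibers, the fiber of $\comp$ over $\gp\in\Spec(\ZZ)$ is (by general tt-geometry for a central ring action) identified with $\Spc$ of the base-change of $\DAM(\FF;\ZZ)$ to the residue field at $\gp$. Over $\gp=(0)$ with residue field $\QQ$ of characteristic zero, \Cref{Cor:DAM} gives a single point $\gm_\infty(0)$; over $\gp=(p)$ with residue field $\FF_p$, the same corollary gives a copy $\bar\W^\infty(p)$ of $\bar\W^\infty$.

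For the characterization of supports, let $t\in\DAM(\FF;\ZZ)$. Functoriality gives $\supp(t)\cap\bar\W^\infty(p)=\supp(t_{\FF_p})$, which is closed in $\bar\W^\infty$ by \Cref{Thm:Zp}, and $\gm_\infty(0)\in\supp(t)$ iff $t_\QQ\ne 0$, \ie iff $t$ is not $\ZZ$-torsion. If $t$ is torsion, killed by some $n>0$, then $t_\QQ=0$ and $t_{\FF_p}=0$ for $p\nmid n$, so $\supp(t)$ lives in finitely many fibers and avoids $\gm_\infty(0)$, giving Type~(I). If $t$ is non-torsion, then $t_{\FF_p}\ne 0$ for every $p$ (the free part of $H^\bullet(t)$ survives any mod-$p$ reduction), so $\gm_\infty(p)\in\supp(t_{\FF_p})$; by \Cref{Thm:Zp} this forces $\supp(t_{\FF_p})$ to be Type~(II) in $\bar\W^\infty$, \ie cofinite with finite complement. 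Moreover, representing $t$ as a bounded complex whose permutation summands lie among $\{\ZZ(\hat\bbZ/K_j)\}_j$ and setting $n=\prod_j[\hat\bbZ:K_j]$, for $p\nmid n$ each $K_j$ contains the $p$-Sylow $\Zp\subseteq\hat\bbZ$, so $\Res^{\hat\bbZ}_{\Zp}(t_{\FF_p})$ has trivial $\Zp$-action, hence is inflated from $\Kb(\FF_p)$ and (being non-zero) has full support $\bar\W^\infty$; transporting back via the homeomorphism of \Cref{Rem:procyclic-Zp}, one obtains $\supp(t_{\FF_p})=\bar\W^\infty(p)$. So $\supp(t)$ is cofinite of Type~(II).

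For the converse realization, every Type~(I) subset $W$ arises as the support of a direct sum of $p$-torsion objects: for each of the finitely many primes meeting $W$, view an object of $\cK(\hat\bbZ;\FF_p)$ with support $W\cap\bar\W^\infty(p)$ (existing by \Cref{Rem:Spc-Zp-Thomason}) as a $p$-torsion object in $\cK(\hat\bbZ;\ZZ)$. Every Type~(II) subset with finite complement $F$ is realized by tensoring a non-torsion permutation module (with full support away from finitely many fibers, cf.~\Cref{Lem:Z_p-generators}) with finitely many $p$-torsion objects that carve out $F$ fiber by fiber. The basis claim then follows from \cite[Remark~2.7]{balmer:spectrum}. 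The main technical obstacle is the cofiniteness of $\supp(t)$ for non-torsion $t$, handled above by exploiting the triviality of the $\Zp$-action on $t_{\FF_p}$ for $p$ outside the finite set of primes dividing the indices of the permutation summands in a complex representing $t$.
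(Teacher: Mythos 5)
Your proposal gets the overall shape right, but there are three substantive gaps compared with the paper's proof.

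First, the fiber identification. You invoke ``general tt-geometry for a central ring action'' to identify the fiber of $\comp$ over $(p)$ with $\Spc(\cK(G;\FF_p))$. There is no off-the-shelf theorem of this strength. What the general theory gives (Balmer's surjectivity result) is that $L_p^*\colon\Spc(\cK(G;\ZZ/p))\to\Spc(\cK(G;\ZZ))$ surjects onto the fiber $\supp(\cone(\ZZ\xto{p}\ZZ))$. To get a \emph{homeomorphism} the paper has to prove injectivity (using that every prime of~$\cK(G;\ZZ/p)$ is generated by the image of~$L_p$, which relies on the explicit integral lifts of generators from \Cref{Rem:integral-generators} and \cite[Proposition~2.10(a)]{balmer-gallauer:rage}) and closedness (going-up). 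Your write-up skips all of this. The fiber over~$(0)$ is indeed easier, since there the fiber category is an honest Verdier localization; but the $p$-fibers need the extra work.

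Second, your Type~(I) realization is murky. You propose to take an object of $\cK(\hat\bbZ;\FF_p)$ with prescribed support and ``view it as a $p$-torsion object in $\cK(\hat\bbZ;\ZZ)$.'' This is not a well-defined operation: an $\FF_p$-permutation module is not $\ZZ$-free, so it is not literally an object of $\Kb(\perm(\hat\bbZ;\ZZ))^\natural$, and restriction of scalars does not land there. More seriously, not every $\FF_p$-object has an integral lift --- the paper points out in \Cref{Rem:integral-generators} that the Koszul generators $\kos[\Zp]{\ideal{p^i}}$ fail to lift when $p=2$, and substitutes the acyclic complex~$D$ from~\cite{balmer-gallauer:resol-small}. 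The paper's realization instead starts with a genuine integral object $t$ whose support in the $p$-fiber is correct (using those integral lifts), then tensors with $\cone(\ZZ\xto{p}\ZZ)$ to kill the other fibers. Without this care your argument has a hole at $p=2$.

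Third, your Type~(II) realization cannot work as written. You say to take a non-torsion permutation module and ``tensor with finitely many $p$-torsion objects that carve out~$F$ fiber by fiber.'' But tensoring with a $p$-torsion object produces a $p$-torsion object, hence kills the generic fiber (and all $q$-fibers with $q\neq p$), so the result cannot contain $\gm_\infty(0)$ and cannot be Type~(II). The paper's construction is additive, not multiplicative: take $t'$ realizing the desired support in the one nontrivial $p$-fiber and form the \emph{direct sum} $t'\oplus\ZZ(G/N)$, where $N\normaleop G$ has index a sufficiently large $p$-power so that $\ZZ(G/N)$ fills in the generic fiber and all the $q$-fibers. Replacing your tensor by this direct-sum device is essential.

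The argument you give for cofiniteness of $\supp(t)$ when $t$ is non-torsion is essentially the paper's, just phrased via restriction to $\Zp$ and triviality of the action for $p$ prime to the indices, rather than via inflation from a finite quotient of order prime to~$p$; those two routes are equivalent. The remaining claims (that $\gm_\infty(0)\in\supp(t)$ iff $t$ is non-torsion, and the basis statement at the end) are fine.
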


\begin{Rem}
Of course, the condition of a subset of~$\bar\W^\infty$ being closed in the statement can be made explicit using \Cref{Thm:Zp,Prop:V^infty-topology}.
\end{Rem}

\begin{proof}[Proof of \Cref{Cor:Fq-stratification-over-Z}]
Let $G$ be the absolute Galois group of~$\FF$.
We translate the questions to modular representation theory for~$G$ as usual.
The fiber over~$(0)$ identifies with the spectrum of~$\cK(G;\ZZ)\otimes\QQ\simeq\cK(G;\QQ)$.
This is a singleton
\[
\gm_\infty(0)=\ker(\cK(G;\ZZ)\to\cK(G;\QQ))
\]
by \Cref{Rem:Spc-to-Sylow}.

Now, let $p>0$ and denote by $L_p\colon \cK(G;\ZZ)\to\cK(G;\ZZ/p)$ the base change functor, with right adjoint~$R_p$.
The fiber of the comparison map over~$(p)$ is the support of~$\cone(\ZZ\xto{p}\ZZ)=R_p(\ZZ/p)$ hence
\[
L_p^*\colon \Spc(\cK(G;\ZZ/p))\to\Spc(\cK(G;\ZZ))
\]
surjects onto this fiber, by~\cite[Theorem~1.7]{balmer:surjectivity}.
We claim that this map is in fact a homeomorphism onto its image.
For injectivity, it suffices to show that every prime ideal in~$\cK(G;\ZZ/p)$ is generated by the image of~$L_p$, see~\cite[Proposition~2.10(a)]{balmer-gallauer:rage}.
As~$L_p$ commutes with inflation along~$G\onto\Zp$ we reduce to~$G=\Zp$ for this question.
We may then apply \Cref{Rem:integral-generators}.
To prove that~$L_p^*$ is a closed map, it suffices to show it has the going-up property~\cite[Theorem~5.3.3]{dst:spectral-spaces}.
So let~$x\in\Spc(\cK(G;\ZZ/p))$ and $L_p^*(x)\rightsquigarrow y$.
Since the image of~$L_p^*$ is closed, $y=L_p^*(x')$ for some~$x'$.
It then follows again from~\cite[Proposition~2.10(a)]{balmer-gallauer:rage} together with \Cref{Rem:integral-generators} that $x\rightsquigarrow x'$.

For the second statement let $t\in\cK(G;\ZZ)$ and set $Z=\supp(t)$.
As $L_p^*$ is continuous we deduce that $Z\cap\bar\W^\infty(p)$ is closed in $\bar\W^\infty$ for every $p>0$, as claimed.
Assume first that $t\in\gm_\infty(0)$.
By~\cite[Proposition~5.10]{gallauer:tt-dtm-algclosed}, $\supp(t)\cap\bar\W^\infty(p)=\emptyset$ for almost all~$p$.
This shows that $Z$ is of type~\ref{it:type-finite}.
Conversely, given a subset~$T$ of type~\ref{it:type-finite} we may without loss of generality assume it is contained in a single~$\bar\W^\infty(p)$.
We observed already above (see \Cref{Rem:Z_p-generators,Rem:integral-generators}) that there is an object $t\in\cK(G;\ZZ)$ such that $\supp(t)\cap\bar\W^\infty(p)=T$.
But then $\supp(t\otimes \cone(\ZZ\xto{p}\ZZ))=T$.

Now assume $t\notin\gm_\infty(0)$ and fix a prime~$p>0$ with associated base change functor $L_p$ as above.
As $\Res^G_1\colon\cK(G;\QQ)\to\cK(1;\QQ)$ is conservative, we deduce that
$\Res^G_1(t)\otimes\QQ=\Res^G_1(t\otimes\QQ)\neq 0$.
But then, $\Res^G_1(t)\in\cK(1;\ZZ)$ is an object that cannot vanish after applying~$L_p$ either.
In other words, $\Res_1^G(L_p(t))=L_p(\Res^G_1(t))\neq 0$ in~$\cK(1;\ZZ/p)$ and this means that $L_p(t)\notin\gm_\infty$, or $\gm_\infty\in \supp(L_p(t))$.
It follows from \Cref{Thm:Zp} that $Z\cap\bar\W^\infty(p)$ is cofinite.
To show that $Z$ is of type~\ref{it:type-cofinite} it remains to show that it contains all but finitely many $\bar\W^\infty(p)$.
But $t$ is inflated from $G/N$ for some $N\normaleop G$.
All but finitely many~$p$ are coprime to the index~$[G:N]$ and for such~$p$, $L_p(t)$ is inflated from an object $t'\in\cK(G/N;\ZZ/p)$.
As $L_p(t)\neq 0$ we have $t'\neq 0$ so that $t'$ generates the entire tt-category~$\cK(G/N;\ZZ/p)$ by \Cref{Rem:Spc-to-Sylow}.
We deduce that $L_p(t)$ generates $\cK(G;\ZZ/p)$ showing that $\bar\W^\infty(p)\subseteq Z$.

Conversely, let $T$ be a subset of type~\ref{it:type-cofinite}.
Without loss of generality $T\supset\bar\W^\infty(p)$ for all but one prime~$p$.
Let $t'\in\cK(G;\ZZ)$ be an object such that $\supp(t')\cap\bar\W^\infty(p)=T\cap\bar\W^\infty(p)$ for that particular~$p$.
As explained above, such a~$t'$ exists and we may assume it is inflated from $G/N'$ for some $N'\normaleop G$ of index a $p$-power.
Let $N<N'$ be of index~$p$ and consider $t:=t'\oplus \ZZ(G/N)$.
Note that $L_p(t)$ and $L_p(t')$ have the same support $(L_p^*)\inv(T)$.
For every prime~$q\neq p$, $L_q(t)=L_q(t')\oplus \ZZ/q(G/N)$ is inflated from $\cK(G/N;\ZZ/q)$ and not acyclic hence generates the tt-category (\Cref{Rem:Spc-to-Sylow}).
Clearly, $t\notin\gm_\infty(0)$ as well and this shows that $\supp(t)=T$.

The last statement is general tt-geometry: the supports of objects form a basis for the topology.
\end{proof}



\newcommand{\etalchar}[1]{$^{#1}$}


\end{document}
